\documentclass[11pt,reqno]{amsart}

\usepackage[pagebackref]{hyperref}
\usepackage{amsmath,amssymb,amsthm,mathrsfs,cite}
\usepackage{enumitem}
\usepackage{microtype}
\theoremstyle{plain}

\newtheorem{theorem}{Theorem}[section]
\newtheorem{lemma}[theorem]{Lemma}
\newtheorem{proposition}[theorem]{Proposition}
\newtheorem{corollary}[theorem]{Corollary}

\theoremstyle{definition}
\newtheorem{defn}[theorem]{Definition}
\newtheorem{example}[theorem]{Example}
\newtheorem{remark}[theorem]{Remark}

\numberwithin{equation}{section}
\newcommand{\norm}[1]{\left\lVert#1\right\rVert}

\newcommand{\supp}{\mathrm{supp}}

\newcommand{\radius}{r}

\allowdisplaybreaks

\begin{document}

\title[Spectral Invariance and Gevrey Regularity]{Spectral Invariance and Gevrey Regularity for Groups with strongly subexponential growth}

%    Information for first author
\author{Lin Chen$^*$}
%    Address of record for the research reported here
\address{Department of Mathematics and Statistics, Suzhou University of Technology,  Suzhou, $215500$, P. R. China}
\email{linchen198112@163.com}
%    \thanks will become a 1st page footnote.
%\thanks{The first author was supported in part by NSF Grant \#000000.}

%    Information for second author
\author{Qin Wang}
\address{Research Center for Operator Algebras, School of Mathematical Sciences, East China Normal University,
Shanghai, $200241$, P. R. China}
\email{qwang@math.ecnu.edu.cn}
%\thanks{Support information for the second author.}

\thanks{$^*$Corresponding author: Lin Chen}

%    General info
\subjclass[2020]{Primary 22D15, 46H35, 46L89; Secondary 46L05}

\keywords{strong subexponential growth, subexponential weight, quasi-symmetry, Gevrey regularity,  relative Gevrey regularity.}

\begin{abstract}
We study spectral invariance and Gevrey regularity for convolution
operators with kernels in suitable weighted function spaces on locally
compact groups equipped with a locally bounded length function $\ell$.  The main analytic scale is given by the
subexponential weights
	$$\omega_s(x)=\exp(s\ell(x)^\beta),\qquad 0<\beta<1.$$
For groups whose volume growth is bounded above by $e^{R^\gamma}$ for
some $0<\gamma<1$, we establish spectral comparison result for
compactly supported functions.  For compactly supported Hermitian
functions, we prove spectral radius invariance across the symmetric
$q$-pseudofunction $*$-algebra, the weighted and unweighted group
algebras, and the full and reduced group $C^*$-algebras.
For unimodular groups satisfying strong subexponential growth of exponent
at most $\beta$, we construct a Gevrey-Beurling operator algebra inside
the unitized $q$-pseudofunction algebra. We prove that this algebra is
inverse-closed and that its inclusion induces an isomorphism in
topological $K$-theory. The inverse-closedness theorem may be viewed as a
quantitative Gevrey-type noncommutative Wiener lemma. As an application,
we show that whenever a convolution operators with kernels in the
corresponding weighted  Gevrey-Beurling space  is invertible in the unitized
$q$-pseudofunction algebra, then its inverse belongs to the same
Gevrey-Beurling operator algebra and satisfies explicit Gevrey seminorm
estimates. We also develop a relative theory for pairs of finitely generated
groups using Schreier graph lengths and quasi-regular representations.
This provides a subexponential analogue of rapid decay for group pairs,
when subgroup  is normal, it reduces to the usual theory on the
quotient.  The framework can apply to intermediate-growth examples, including the
Grigorchuk group, and is stable under products with polynomial growth
groups and under compact extensions.
\end{abstract}

\maketitle

\section{Introduction}

Spectral invariance is a central theme in noncommutative harmonic
analysis and noncommutative geometry. If $\mathcal A$ is a dense unital subalgebra of a unital Banach algebra
$\mathcal B$, sharing the same identity, and if $\mathcal A$ is
spectrally invariant in $\mathcal B$, then every element of
$\mathcal A$ that is invertible in $\mathcal B$ has its inverse again
in $\mathcal A$. Under standard Fréchet algebra hypotheses, spectral
invariance is equivalent to stability
under holomorphic functional calculus \cite{Schweitzer}. Such
subalgebras therefore preserve topological $K$-theory and serve as
natural smooth domains for cyclic cocycles and index-theoretic
constructions.

The purpose of this paper is to study Gevrey-type spectral regularity
for convolution operators with kernels in suitable weighted function spaces on locally compact groups satisfying strong
subexponential growth conditions. We say that $G$ has
\emph{strong subexponential growth of exponent} $0<\gamma<1$ if, for
every $c>0$, there exists a constant $C_c>0$ such that
$$\mu(B_R)\leq C_c\,e^{cR^\gamma}, \qquad R\geq 0.$$
Throughout the paper, the relevant analytic scale is provided by the
subexponential weights
$$\omega_s(x)=e^{s\ell(x)^\beta},\qquad 0<\beta<1,$$
where $\ell$ is the length function on $G$.

Classically, given an open set $\Omega\subseteq\mathbb{R}^d$ and an
exponent $\sigma\geq 1$, a smooth function $u\in C^\infty(\Omega)$ is
said to belong to the \emph{Gevrey class of order $\sigma$} if, for
every compact set $K\subset\Omega$, there exist constants $C,R>0$ such
that
$$\sup_{x\in K}|\partial^\alpha u(x)|\leq C R^{|\alpha|}(\alpha!)^\sigma,\qquad \alpha\in\mathbb{N}^d.$$
These classes were introduced by Maurice Gevrey in his 1918 work on
partial differential equations \cite{Gev}.
In the Euclidean setting, factorial bounds on derivatives are reflected
in subexponential decay of the Fourier transform, with the decay rate
governed by the reciprocal of the Gevrey order; see, for example,
\cite[Theorem~1.6.1]{Ro}. On compact Lie groups, the
Gevrey-Beurling class $\gamma_{(s)}(G)$ admits a characterization in
terms of subexponential decay of the Fourier transform, specifically,  a
function $\phi$ belongs to $\gamma_{(s)}(G)$ if and only if, for every
$B>0$, there exists a constant $K_B>0$ such that
$$\|\widehat{\phi}(\xi)\|_{\mathrm{HS}}\leq K_B\,e^{-B\langle\xi\rangle^{1/s}},
\qquad [\xi]\in\widehat{G}.$$
Here $\widehat{G}$ denotes the unitary dual of $G$,
$\|\cdot\|_{\mathrm{HS}}$ is the Hilbert--Schmidt norm, and
$\langle\xi\rangle$ is the eigenvalues of the elliptic first-order pseudo-differential operator; see
\cite[Theorem~2.3]{DasguptaRuzhansky2014}. Thus Gevrey regularity may
be viewed as a quantitative intermediate decay condition, it is faster
than any polynomial decay and, for orders strictly greater than one,
weaker than exponential decay.

This perspective also has an abstract Banach-algebraic analogue. In a
Banach algebra equipped with a closed derivation, ultradifferentiable and
Gevrey regularity can be formulated through growth conditions on the
iterates of the derivation. This view point is closely related to the
Dales-Davie Banach algebra constructions of \cite{DalesDavie1973}
and to the theory of inverse-closed smooth Banach subalgebras; see
\cite{Gr,Grk1,Gr2,Gr3,Gr4,Grk5,Klo,Bla} and the references therein.

In the locally compact group setting, the abstract closed derivation is
replaced by the commutator derivation associated with the length
function $\ell$. Setting $D_\ell$ to be the multiplication operator by
$\ell$, we define $\delta_\ell(T)=[D_\ell,T]$.
For $0<\beta<1$, the  Gevrey-Beurling type space
$$S_q^\infty(G)=\bigcap_{s>0}L^q\!\bigl(G,e^{s\ell^\beta}\bigr)$$
consists of functions decaying faster than $e^{-s\ell^\beta}$ for
every $s>0$. This spatial decay can translates into operator-theoretic
Gevrey regularity, for convolution operator $T_f$ associated with a
kernel $f\in S_q^\infty(G)$, the iterates of
$\delta_\ell$ satisfy estimates of Gevrey order $1/\beta$,
$$\sup_{k\geq 0}\frac{R^k}{(k!)^{1/\beta}}\bigl\|\delta_\ell^k(T_f)\bigr\|_{B(L^q(G))}<\infty
\qquad\text{for every }R>0.$$
Thus the natural Gevrey order associated with the scale
$e^{s\ell^\beta}$ is $\sigma=1/\beta$; see Proposition~\ref{prochen1}.
This observation links subexponential decay on the group with
smoothness of convolution operators.  Thus the Gevrey-Beurling
opertor algebra constructed below may be viewed as subexponential counterparts
of the Sobolev-type smooth algebras arising in the rapid decay setting.
The exponent $\beta$ controls the subexponential decay scale, while its
reciprocal determines the corresponding Gevrey order.

This approach is complementary to the rapid decay theory of groups.
Classical rapid decay employs polynomial weights to construct
Sobolev-type smooth subalgebras of group $C^*$-algebras
\cite{Jo1,Jo2}, which play a fundamental role in noncommutative
geometry---notably in the work of Connes--Moscovici
\cite{ConnesMoscovici1990} and Lafforgue \cite{Laff}. However, for
amenable groups, rapid decay is equivalent to polynomial growth, so
groups of intermediate growth lie outside the polynomial rapid decay
framework. Subexponential weights provide a finer analytic scale
naturally adapted to such groups.

\medskip
Our first main result is a spectral comparison theorem.

{\it{ \noindent \bf Theorem A} (cf. Theorem~\ref{main1}).
Let $G$ be a compactly generated unimodular locally compact group, and
let $\ell$ be a length function equivalent to a word length associated
with a compact generating set. Let
$\omega_s(x)=e^{s\ell(x)^\beta}$
be a subexponential weight with exponent $0<\beta<1$
(see Definition~\ref{def1}). Assume that the volume growth of $G$ with
respect to $\ell$ is bounded by a subexponential function of order
$\gamma$, for some $0<\gamma<1$; that is, there exist constants
$C,c>0$ such that
$$\mu(B_R)\leq C e^{cR^\gamma},\qquad R\geq 0,$$
where
$B_R=\{x\in G:\ell(x)\leq R\}.$
Then, for every $f\in C_c(G)$,
$$\sigma_{1,s}(f)=\sigma_{L^1(G)}(f)=\sigma_{C_r^*(G)}(f)=\sigma_{C^*(G)}(f).$$
If, in addition, $f=f^*$, then for every $1<q<\infty$,
$$r_{1,s}(f)=r_{L^1(G)}(f)=r_{PF_q^*(G)}(f)=r_{C_r^*(G)}(f)=r_{C^*(G)}(f).$$
In particular, $L^1(G,\omega_s)$ is quasi-symmetric.}

\medskip

The classical Wiener lemma states that the algebra of absolutely
convergent Fourier series is inverse-closed in $C(\mathbb T)$. In other
words, if such a series defines a nowhere vanishing continuous function
on the circle, then its reciprocal again has an absolutely convergent
Fourier series. Our second main result is a quantitative Gevrey-type
noncommutative Wiener lemma.
\medskip

{\it {\noindent \bf Theorem B} (cf. Theorem~\ref{inverse-closed}).
Let $G$ be a locally compact group equipped with a proper locally bounded
length function $\ell$, and let $\delta_\ell$ be the derivation defined
in Definition~\ref{def45}. For $1<q<\infty$ and $0<\beta<1$, let
$\widetilde{PF_q(G)}$ denote the unitization of $PF_q(G)$, and set
$$G^{(1/\beta)}(B(L^q(G)))=\bigcap_{R>0}G_R^{(1/\beta)}(B(L^q(G))),$$
the operator-algebraic Gevrey-Beurling class (see Definition \ref{def23}) associated with
$\delta_\ell$. Then the algebra
$$\mathcal A_{q,\beta}(G)=\widetilde{PF_q(G)}\cap G^{(1/\beta)}(B(L^q(G)))$$
is inverse-closed in $\widetilde{PF_q(G)}$.
Moreover, for every $R>0$ and every $0<R'<R$, the inverse satisfies the
quantitative estimate
$$\|T^{-1}\|_{1/\beta,R'}\leq\|T^{-1}\|_{B(L^q(G))}
v_{1/\beta-1}\left(\|T^{-1}\|_{B(L^q(G))}\frac{\|T\|_{1/\beta,R}}{1-R'/R}\right),$$
where
$v_{1/\beta-1}(x)=\sum_{m=0}^{\infty}\frac{x^m}{(m!)^{1/\beta-1}}$.}

If $G$ is unimodular and satisfies condition $(SG_\beta)$, meaning that
$G$ has strong subexponential growth with some exponent not exceeding
$\beta$, where $0<\beta<1$ is the exponent of the subexponential weight,
then the inclusion
$\mathcal A_{q,\beta}(G)
\hookrightarrow
\widetilde{PF_q(G)}$
induces an isomorphism in topological $K$-theory; see
Corollary~\ref{K}.
Combining the preceding results with the Gevrey bounds for convolution
operators with kernels in the weighted Gevrey-Beurling space
$S_q^\infty(G)$( see Proposition \ref{prochen1}), we obtain the following application.
\medskip

{\it {\noindent \bf Theorem C} (cf. Corollary~\ref{main2}).
Assume that $G$ is unimodular and satisfies condition $(SG_\beta)$ with
respect to a proper locally bounded length function $\ell$, where
$0<\beta<1$. Let $1<q<\infty$. For $f\in S_q^\infty(G)$, let
$T_f:=\lambda_q(f)$
be the associated convolution operator on $L^q(G)$. If $T_f$ is
invertible in the unitization $\widetilde{PF_q(G)}$, then
$T_f^{-1}\in\mathcal A_{q,\beta}(G)$,
where the Gevrey-Beurling class is taken with respect to the derivation
$\delta_\ell$. Moreover, the Gevrey seminorms of $T_f^{-1}$ satisfy the
estimate of Theorem~B.}
\medskip

Our third main result is a relative version for pairs of finitely
generated groups. Using the quotient length on the Schreier graph and
the quasi-regular representation, we obtain a subexponential analogue of
pair rapid decay in \cite{ChatterjiZarka2024}.
\medskip

{\it {\noindent \bf Theorem D} (cf. Theorem~\ref{thm:relative-gevrey-wiener}).
Let $G$ be a finitely generated discrete group and let $H\leq G$ be a
finitely generated subgroup. Let $0<\beta<1$ and $1<q<\infty$. If
$$T\in\mathcal A_{q,\beta}(G,H)=\widetilde{PF_q^H(G)}\cap G_H^{(1/\beta)}(B(\ell^q(G/H)))$$
and $T$ is invertible in $\widetilde{PF_q^H(G)}$, then
$T^{-1}\in\mathcal A_{q,\beta}(G,H)$.
In particular, if $f\in\mathcal S_{1,H}^{\infty}(G)$ and
$\pi_q^H(f)$ is invertible in $\widetilde{PF_q^H(G)}$, then
$$\pi_q^H(f)^{-1}\in\widetilde{PF_q^H(G)}\cap G_H^{(1/\beta)}(B(\ell^q(G/H))).$$
Moreover, the Gevrey seminorms of the inverse satisfy the estimates in
Theorem B.}

When the subgroup $H$ is normal, this construction reduces to the
ordinary Gevrey theory on the quotient group; see
Corollary~\ref{normal}. The relative condition can be strictly weaker
than the corresponding global Gevrey condition. For example, see
Example~\ref{ex612}: if
$$G=F_2\times\mathbb Z^d\qquad\text{and}\qquad H=F_2\times\{0\},$$
then $G/H\cong\mathbb Z^d$.
Thus the relative length only detects the quotient direction, whereas
the global length also detects the $F_2$-direction. Consequently,
Proposition~\ref{prop:pair-gevrey-embedding} shows that such kernels
give rise to relative Gevrey operators, and
Theorem~\ref{thm:relative-gevrey-wiener} implies that invertibility in
the unitized $q$-pseudofunction algebra preserves the corresponding
relative Gevrey regularity.
\medskip

Finally, we present examples showing that the framework applies to
groups of intermediate growth and strong subexponential growth,
including examples involving the Grigorchuk group, direct products with
groups of polynomial growth, and compact extensions. These examples
illustrate that Gevrey regularity provides a natural and effective
replacement for polynomial rapid decay when subexponential estimates are
the appropriate analytic control.
\medskip

The paper is organized as follows. 
Section~2 recalls the basic definitions and preliminary results.
Section~3 proves the spectral comparison theorem. Section~4 establishes norm estimates for iterated commutators of
convolution operators and derives Gevrey bounds of order $1/\beta$. It
then constructs the associated Gevrey-Beurling operator algebra and
proves a quantitative Gevrey-type noncommutative Wiener lemma. As an
application, we show that if a convolution operator with kernel in the
corresponding weighted Gevrey-Beurling space is invertible in the
unitized $q$-pseudofunction algebra, then its inverse belongs to the same
Gevrey-Beurling operator algebra and satisfies explicit Gevrey seminorm
estimates.
Section~5 presents examples and stability results. Section~6 develops
relative Gevrey regularity. Section~7 discusses open problems.

\section{Preliminaries}
In this section, we recall some basic terminology. Let $G$ be a locally compact group with identity element $e$. A function $\ell:G\to \mathbb R_+$ is called a \emph {length function} if
$$\ell(e)=0,\qquad\ell(gh)\leq \ell(g)+\ell(h),\qquad\ell(g^{-1})=\ell(g)$$
for all $g,h\in G$. We say that $\ell$ is \emph {locally bounded} if it is bounded
on compact subsets of $G$, and \emph {proper} if $\ell^{-1}([0,c])$
is compact for every $c\geq 0$.
\begin{defn}\label{def1}
Let $G$ be a locally compact group.  A \emph{weight} on $G$ is a
Borel measurable function $\omega: G\rightarrow[1,\infty)$ such that $\omega(e)=1$ and
$$\omega(xy)\leq\omega(x)\omega(y)$$ for all $x,y\in G$. The weight $\omega$ is said to be \emph{symmetric} if $\omega(x)=\omega(x^{-1})$.
For any $s>0$ and $0<\beta< 1$,  we define the weight function $\omega_s: G\to[1, \infty)$ by $\omega_s(x)=e^{s\ell(x)^\beta}$.
Since the function $t\mapsto t^\beta$ is concave and subadditive on $[0, \infty)$, we have
$$\ell(xy)^\beta\le(\ell(x)+\ell(y))^\beta\le\ell(x)^\beta+\ell(y)^\beta.$$
This implies the submultiplicativity of the weight function
$$\omega_s(xy)=e^{s\ell(xy)^\beta}\le e^{s\ell(x)^\beta} e^{s\ell(y)^\beta}=\omega_s(x)\omega_s(y).$$
Since the length function is symmetric, $\omega_s$
is symmetric as well. We call such a symmetric weight \emph{subexponential weight   with the exponent $\beta$}.
\end{defn}
\begin{defn}
Let $G$ be a locally compact group equipped with a locally bounded
length function $\ell$, and let $\mu$ be a Haar measure on $G$.
For $0<\gamma<1$, we say that $G$ has \emph {strong subexponential growth of
exponent $\gamma$} with respect to $\ell$ if, for every $c>0$, there
exists a constant $C_c>0$ such that
$$\mu(B_R)\le C_c e^{cR^\gamma},\qquad R\ge 0.$$
\end{defn}

Let $G$ be a locally compact unimodular group with a left invariant
Haar measure $\mu$ on $G$. Now let $C_c(G)$ denote the space of compactly supported continuous functions on $G$. We endow  $C_c(G)$ with the
convolution product, which for $f,g\in C_c(G)$ is given by  
$$(f\ast g)(x)=\int_G f(y) g\bigl(y^{-1}x\bigr)d\mu(y)$$ 
for $x\in G$, and involution given by
$f^*(x)=\overline{f(x^{-1})}$,
for $f\in C_c(G)$ and $x\in G$.
For  $1<q<\infty$. The \emph{left-regular representation} of
$C_c(G)$ on $L^q(G)$ is given by
$$\lambda_q: C_c(G)\to B(L^q(G)), \qquad \lambda_q(f)g=f \ast g,$$
for all $f\in C_c(G)$ and $g\in L^q(G)$.
The norm-closure of $\lambda_q(C_c(G))$ inside $B(L^q(G))$ is denoted by
$PF_q(G)$ and called \emph {the algebra of $q$-pseudofunctions on $G$} (see \cite[Section 4]{Der} for references
and further information). Clearly we have $PF_2(G)=C_r^*(G)$, the reduced $C^*$ algebra of $G$. Suppose $1\le q\le p\le\infty$ are conjugate, i.e.
$\frac{1}{q}+\frac{1}{p}=1$,
consider the duality relation $L^q(G)^*\cong L^p(G)$ given by
$$\langle f,g\rangle:=\int_G f(s)\overline{g(s)}ds\qquad(f\in L^q(G),\ g\in L^p(G)).$$
Then, for every $f\in C_c(G)$, $g\in L^q(G)$ and $h\in L^p(G)$, it is straightforward to verify that
$\langle\lambda_q(f^*)g, h\rangle=\langle g, \lambda_p(f)h\rangle.$
The Banach $*$-algebra $PF_q^*(G)$ is defined to be the completion of
$\bigl(C_c(G), \|\cdot\|_{PF_q^*(G)}, *\bigr)$.
with the standard convolution and involution,
and
$$\|f\|_{PF_q^*(G)}:=\max\Bigl\{\|\lambda_q(f)\|_{B(L^q(G))},\|\lambda_p(f)\|_{B(L^p(G))}Bigr\}.$$
It is immediate that if $q$ is the conjugate of $p$, then $PF_p^*(G)=PF_q^*(G)$ isometrically as
Banach $*$-algebras. Samei and Wiersma (\cite{Sa}) explored these algebras in relation to group amenability, while Liao and Yu (\cite{Liao}) studied them within the framework of
$K$-theory.

We now introduce the  Gevrey-Beurling  class and the Dales-Davie algebra.  Historically, Dales--Davie algebras were introduced by Dales and Davie in the scalar Banach function algebra setting \cite{DalesDavie1973}, they were later used in the abstract Banach-algebraic theory of ultradifferentiable and Gevrey-type inverse-closed subalgebras \cite{Klo}.  Classical scalar counterparts in the complex plane are discussed in \cite{Sidd}.

\begin{defn}\label{def23}
Let $\mathcal{A}$ be a unital Banach algebra and let $\delta$ be a closed derivation on $\mathcal{A}$.  We use the convention
$\delta^0=\mathrm{id}_{\mathcal A}$ and define 
$$\mathcal{A}^\infty=\bigcap_{n=1}^\infty \mathrm{Dom}(\delta^n)$$
to be the smooth subalgebra associated with the closed derivation $\delta$. For $\sigma > 1$ and $R>0$, we define  
$$G_{R}^{(\sigma)}(\mathcal{A})
=\left\{T\in \mathcal{A}^\infty:\ \|T\|_{\sigma,R}:=
\sup_{n\ge 0}\frac{R^n\|\delta^n(T)\|_{\mathcal{A}}}{(n!)^\sigma}<\infty\right\}.$$
The \emph {operator-algebraic Gevrey-Beurling class $G^{(\sigma)}(\mathcal{A})$  of order $\sigma$} associated with $\delta$ is
defined by
$$G^{(\sigma)}(\mathcal{A})=\bigcap_{R>0}G_{R}^{(\sigma)}(\mathcal{A})$$
equipped 
with the projective limit topology generated by the seminorms $\{\|\cdot\|_{\sigma,R}\}_{R>0}$. 
\end{defn}

\begin{remark}\label{rek24}
With this topology, $G^{(\sigma)}(\mathcal A)$ is a Fr\'echet algebra.
Indeed, for each $R>0$, the closedness of $\delta$ implies that each
$\delta^n$ is closed, and The Closed Graph Theorem shows that
$G_R^{(\sigma)}(\mathcal A)$ is complete with respect to the seminorm
$\|\cdot\|_{\sigma,R}$.
Since the seminorms are increasing in $R$, the projective limit topology
is generated by the countable family
	$\{\|\cdot\|_{\sigma,m}\}_{m\in\mathbb N}$. Hence
$G^{(\sigma)}(\mathcal A)$ is a Fr\'echet space.
It remains to check that multiplication is continuous. Let $0<R<R^\prime$.
For $S,T\in G^{(\sigma)}(\mathcal A)$, the iterated Leibniz formula gives
$$\delta^n(ST)=\sum_{k=0}^n\binom{n}{k}\delta^k(S)\delta^{n-k}(T).$$
Hence
$$\|\delta^n(ST)\|_{\mathcal A}\le\|S\|_{\sigma,R^\prime}\|T\|_{\sigma,R^\prime}
\frac{1}{(R^\prime)^n}\sum_{k=0}^n\binom{n}{k}(k!)^\sigma((n-k)!)^\sigma.$$
Since $\sigma>1$, we have
$$\binom{n}{k}(k!)^\sigma((n-k)!)^\sigma=n!(k!)^{\sigma-1}((n-k)!)^{\sigma-1}	\le(n!)^\sigma.$$
Therefore
$$\frac{R^n\|\delta^n(ST)\|_{\mathcal A}}{(n!)^\sigma}
\le(n+1)\left(\frac{R}{R^\prime}\right)^n\|S\|_{\sigma,R^\prime}\|T\|_{\sigma,R^\prime}.$$
Taking the supremum over $n\ge0$, we obtain
$$\|ST\|_{\sigma,R}\le C_{R,R^\prime}\|S\|_{\sigma,R^\prime}\|T\|_{\sigma,R^\prime},$$
where
$$C_{R,R^\prime}=\sup_{n\ge0}(n+1)\left(\frac{R}{R^\prime}\right)^n<\infty.$$
Thus multiplication is jointly continuous in the projective limit
topology, and $G^{(\sigma)}(\mathcal A)$ is a Fr\'echet algebra.
\end{remark}

While the Gevrey-Beurling seminorms impose a supremum-type bound on the
growth of derivatives, it is often technically useful to work with a
slightly stronger $\ell^1$-type summability condition. This motivates the
following Dales-Davie  algebra.

\begin{defn}
For $R>0$, define the Dales-Davie algebra by
$$\mathcal{D}^1_{M_R}=\left\{T\in\mathcal A^\infty:
\|T\|_{\mathcal D^1_{M_R}}:=\sum_{n=0}^\infty
\frac{R^n}{(n!)^\sigma}\|\delta^n(T)\|_{\mathcal A}<\infty\right\}.$$
\end{defn}
\begin{remark}
The space $\mathcal D^1_{M_R}$ is a Banach algebra. Completeness follows
from the closedness of $\delta$ by The Closed Graph Theorem  argument. Moreover,
for $S,T\in\mathcal D^1_{M_R}$, the Leibniz formula gives
$$\delta^n(ST)=\sum_{k=0}^n\binom{n}{k}\delta^k(S)\delta^{n-k}(T).$$
Hence
$$\|ST\|_{\mathcal D^1_{M_R}}\leq\sum_{k,l\geq0}
\binom{k+l}{k}^{1-\sigma}\frac{R^k\|\delta^k(S)\|_{\mathcal A}}{(k!)^\sigma}
\frac{R^l\|\delta^l(T)\|_{\mathcal A}}{(l!)^\sigma}.$$
Since $\sigma>1$, we have
$$\binom{k+l}{k}^{1-\sigma}\leq1.$$
Therefore
$\|ST\|_{\mathcal D^1_{M_R}}\leq\|S\|_{\mathcal D^1_{M_R}}\|T\|_{\mathcal D^1_{M_R}}$.
Thus $\mathcal D^1_{M_R}$ is a Banach algebra.
\end{remark}

In the setting of locally compact groups, we model Gevrey-type
regularity at the level of functions by imposing subexponential weighted
$L^q$-decay with respect to a length function. This gives a weighted
length-function analogue of the classical Gevrey-Beurling classes
(see \cite[Definition~2.2]{DasguptaRuzhansky2014}).

\begin{defn}
Let $G$ be a locally compact group equipped with a locally bounded
length function $\ell$, and let $0<\beta<1$. For $s>0$ and
$1<q<\infty$, set
$$\|f\|_{q,s}=
\left(\int_G |f(x)|^q e^{sq\ell(x)^\beta}\,d\mu(x)\right)^{1/q},$$
and write
$$L^q(G,e^{s\ell^\beta})=\{f\in L^q_{\mathrm{loc}}(G):\|f\|_{q,s}<\infty\}.$$
We define \emph{the weighted Gevrey-Beurling type space} on $G$  by
$$S_q^{\infty}(G)=\bigcap_{s>0}L^q(G,e^{s\ell^\beta}).$$
\end{defn}

\section{Spectral invariance on compact support functions}
Recall that a Banach $*$-algebra $\mathcal A$ is said to be {\it symmetric} if for all $a\in \mathcal A$ we have
$\sigma_{\mathcal A}(a^*a)\subset[0,\infty)$, this is equivalent to saying that $\mathcal A$ is {\it Hermitian} ($a=a^*\in\mathcal A$ implies $\sigma_{\mathcal A}(a)\subset \mathbb R$). A locally compact group $G$ is {\it Hermitian} if $L^1(G)$
is a Hermitian Banach $*$-algebra. The symmetry of group algebras and weighted group algebras has a long
history.  Poguntke proved symmetry for connected nilpotent Lie groups
\cite{Pog}; Pytlik obtained symmetry for suitable polynomially weighted
algebras on connected groups of polynomial growth \cite{Pyt}; and Losert
proved symmetry for subexponentially weighted algebras on compactly
generated groups of polynomial growth \cite{los1,los2}.  More recently,
quasi-Hermitian groups were introduced in \cite{pal}, where it was shown
that locally compact groups of subexponential growth are quasi-Hermitian.
Spectral interpolation methods for triples of Banach $*$-algebras were later developed in \cite{Sa}, with applications to
amenability. 

Motivated by these results, in this section we establish a spectral
invariance theorem for subexponentially weighted group algebras on
groups of strong subexponential growth. we prove that every compactly supported function has the same spectrum
in the weighted group algebra $L^1(G,\omega_s)$, in the ordinary group
algebra $L^1(G)$, and in both the reduced and full group $C^*$-algebras.
For self-adjoint compactly supported functions, we further obtain a
spectral-radius identity involving the $L^q$-pseudofunction algebra
$PF_q^*(G)$, for every $1<q<\infty$. 
This result may be viewed as a subexponential-growth analogue of the
known spectral invariance results for weighted group algebras on groups
of polynomial growth.

\begin{defn}
Let $\mathcal A$ be a Banach $*$-algebra associated with a locally compact group $G$, and suppose that $C_c(G)$ is a dense $*$-subalgebra of $\mathcal A$. We say that $\mathcal A$ is \emph{quasi-symmetric} if
$$\sigma_{\mathcal A}(f^* * f) \subseteq \mathbb{R}_{\ge 0}$$
for every $f \in C_c(G)$. A locally compact group $G$ is quasi-symmetric if
the group algebra $L^1(G)$ is a quasi-symmetric Banach $*$-algebra.
\end{defn}

To prove the main result of this section, we rely on the following celebrated theorem, which can be found in \cite[Proposition 2.5]{Hu}, \cite[Theorem 2.3]{Sa} and \cite[Proposition 6.1]{Fend2}.
\begin{theorem}[Barnes-Hulanicki Theorem]\label{BHT}
Let $\mathcal A$ be a Banach $*$-algebra, $\mathcal S$ a $*$-subalgebra of $\mathcal A$, and
$\pi : \mathcal A \to B(\mathcal{H})$ a faithful $*$-representation.
If $A$ is unital, we assume that
$\pi(1_A)=\mathrm{id}_{B(\mathcal{H})}$.
If $$r_{\mathcal A}(a)=\|\pi(a)\|$$ for all $a \in \mathcal S_h$, then
$$\sigma_A(a)=\sigma_{B(\mathcal{H})}(\pi(a))$$
for every $a \in \mathcal S$.
\end{theorem}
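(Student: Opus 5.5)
The plan is to reduce the equality of spectra to an equality of spectral radii with operator norms for hermitian elements, and then to use the $C^*$-identity to recover the spectrum itself. Write $\tilde a=a-\lambda 1$ for $\lambda\notin\sigma_{B(\mathcal H)}(\pi(a))$. The inclusion $\sigma_{B(\mathcal H)}(\pi(a))\subseteq\sigma_{\mathcal A}(a)$ is automatic, since $\pi$ is a unital homomorphism (after unitizing if necessary; the unitization of $\mathcal S$ is still a $*$-subalgebra, and the hypothesis on hermitian elements passes to it because $r(b+t1)$ and $\|\pi(b)+t\|$ are both computed from the spectrum/norm of a self-adjoint element shifted by a real scalar). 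The substance is the reverse inclusion: if $\pi(a)$ is invertible in $B(\mathcal H)$, then $a$ is invertible in $\mathcal A$.

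\textbf{Step 1: symmetry of $\mathcal S$-elements inside $\mathcal A$.} For $b\in\mathcal S_h$ I would first show $\sigma_{\mathcal A}(b)\subseteq\mathbb R$. Consider the closed commutative Banach $*$-subalgebra generated by $b$, or simply argue with the exponentials $e^{itb}$, $t\in\mathbb R$, via Vidav/Ptak-type estimates. The hypothesis $r_{\mathcal A}(c)=\|\pi(c)\|$ for hermitian $c$ in $\mathcal S$ says the Ptak function $c\mapsto r(c^*c)^{1/2}$ coincides with the $C^*$-seminorm $\|\pi(c)\|$ on $\mathcal S$, because $c^*c\in\mathcal S_h$. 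Since $\pi$ is faithful, this seminorm is a norm. By Ptak's theory, when the Ptak function is subadditive/submultiplicative on a $*$-subalgebra, the elements $c^*c$ have nonnegative spectrum.

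\textbf{Step 2: invert $a^*a$.} Given $a\in\mathcal S$ with $\pi(a)$ invertible, the self-adjoint element $h=a^*a\in\mathcal S_h$ has $\pi(h)\ge\varepsilon>0$. Set $M=\|\pi(h)\|$ and $k=1-h/M\in\tilde{\mathcal S}_h$. Then $\|\pi(k)\|\le 1-\varepsilon/M<1$, and the hypothesis gives $r_{\mathcal A}(k)=\|\pi(k)\|<1$. Hence $1-k=h/M$ is invertible in $\mathcal A$ by the Neumann series, which converges because the spectral radius is less than one. So $a^*a$ is invertible in $\mathcal A$, and $a$ is left invertible in $\mathcal A$. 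The same argument with $aa^*$ gives right invertibility, so $a$ is invertible.

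\textbf{Step 3: general $\lambda$.} Apply Step 2 to $a-\lambda1$, which is an element of the unitized $\mathcal S$, noting again that the hypothesis extends to that unitization as explained above. This step, and in particular whether the hypothesis on $\mathcal S_h$ really extends to $\tilde{\mathcal S}_h$, is the one I expect to be the main obstacle. I would handle it by noting that for $b\in\mathcal S_h$ and real $t$, Step 1 gives real spectrum, so $r(b+t)=\max|\sigma(b)+t|$. I would then compare this with $\|\pi(b)+t\|=\max|\sigma(\pi(b))+t|$ using $\sigma_{\mathcal A}(b)\supseteq\sigma(\pi(b))$ together with the equality of the extreme points. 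That equality follows by applying the hypothesis to polynomials $p(b)\in\mathcal S_h$ without constant term. Since Step 1 only yields real spectrum, I would rely on those polynomials to force $\sigma_{\mathcal A}(b)\subseteq[\min\sigma(\pi(b)),\max\sigma(\pi(b))]\cup\{0\}$.
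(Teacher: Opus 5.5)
The paper gives no proof of this theorem; it quotes it from Hulanicki, Samei--Wiersma and Fendler--Gr\"ochenig--Leinert. Your argument therefore has to stand on its own, and it has a genuine gap exactly where you suspected.

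\textbf{The gap.} Step 2 needs $r_{\mathcal A}(1-h/M)=\|\pi(1-h/M)\|$ for $h=a^*a$ with $\sigma(\pi(h))\subseteq[\varepsilon,M]$. Step 3 only yields $\sigma_{\mathcal A}(h)\subseteq\operatorname{conv}\sigma(\pi(h))\cup\{0\}$, because polynomials without constant term cannot see the point $0$. If $0\in\sigma_{\mathcal A}(h)$, then $1\in\sigma_{\mathcal A}(1-h/M)$. So the identity you need is equivalent to $0\notin\sigma_{\mathcal A}(h)$, which is the invertibility of $h$ you are trying to prove. The ``equality of extreme points'' fails precisely in the case of interest, and the argument is circular.

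The missing ingredient is faithfulness of $\pi$. You never actually use it; the remark that $\|\pi(\cdot)\|$ is a norm plays no role. It cannot be dispensed with. Take $\mathcal A=\mathbb C^2$ with the sup norm, the unital but non-faithful $\pi(x,y)=x$ on $\mathcal H=\mathbb C$, and $\mathcal S=\mathbb C\oplus 0$. The hypothesis holds on $\mathcal S_h$, yet $\sigma_{\mathcal A}((1,0))=\{0,1\}\neq\{1\}=\sigma(\pi(1,0))$. The hypothesis also fails on $\tilde{\mathcal S}_h$: for $b=(1,0)$, $t=-1$ one gets $r_{\mathcal A}(b-1)=1$ but $\|\pi(b)-1\|=0$. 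So your preamble claim that the hypothesis passes to the unitization ``because both sides involve a shifted self-adjoint element'' is false in general.

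\textbf{The repair.} For $b\in\mathcal S_h$, proceed as follows.
\begin{enumerate}
\item Get $\sigma_{\mathcal A}(b)\subseteq\mathbb R$ directly from the hypothesis. If $x+iy\in\sigma_{\mathcal A}(b)$ with $y\neq 0$, take the real polynomials $p_n(t)=t\,(1-c(t-x)^2)^n$ with $c>0$ small. They satisfy $p_n(0)=0$ and are bounded by $\|\pi(b)\|$ on $\sigma(\pi(b))$, but $|p_n(x+iy)|\to\infty$, contradicting $r_{\mathcal A}(p_n(b))=\|\pi(p_n(b))\|$. Your appeal to Ptak's theory is not valid as phrased: that criterion concerns spectra in a Banach $*$-algebra on which the Ptak function is known to be subadditive, not an arbitrary non-closed $*$-subalgebra.
\item Weierstrass approximation by real polynomials with $p(0)=0$ then gives $\sigma_{\mathcal A}(b)\subseteq\sigma(\pi(b))\cup\{0\}$.
\item Suppose $0\in\sigma_{\mathcal A}(b)\setminus\sigma(\pi(b))$. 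Then $0$ is isolated in $\sigma_{\mathcal A}(b)$, and the Riesz idempotent $e=\frac{1}{2\pi i}\oint_\Gamma(z-b)^{-1}\,dz$ over a small circle $\Gamma$ about $0$ is nonzero. On the other hand $\pi(e)=\frac{1}{2\pi i}\oint_\Gamma(z-\pi(b))^{-1}\,dz=0$, since $\Gamma$ encloses no point of $\sigma(\pi(b))$. Faithfulness gives a contradiction. In the nonunital case, $e=1+y$ with $y\in\mathcal A$ and $\pi(y)=-I$, and faithfulness forces $-y$ to be a unit of $\mathcal A$.
\item Hence $\sigma_{\mathcal A}(b)=\sigma(\pi(b))$ on $\mathcal S_h$. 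This transfers trivially to $b+t1$ for $t\in\mathbb R$.
\end{enumerate}
Your left/right invertibility idea then finishes directly, with no Neumann series needed. For $\lambda\notin\sigma(\pi(a))$, apply the self-adjoint case to $(a-\lambda)^*(a-\lambda)$ and to $(a-\lambda)(a-\lambda)^*$, both in $\tilde{\mathcal S}_h$.
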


The main result of this section is the following theorem.

\begin{theorem}\label{main1}
Let $G$ be a compactly generated unimodular locally compact group, and
let $\ell$ be a length function equivalent to a word length associated
with a compact generating set. Let
$\omega_s(x)=e^{s\ell(x)^\beta}$  be a subexponential weight with the exponent $\beta$ (Definition \ref{def1}).
Assume that the growth of $G$ with respect to $\ell$ is bounded by
a subexponential function of order $\gamma$, for some $0<\gamma<1$, that
is, there exist constants $C,c>0$ such that
$$\mu(B_R)\leq C e^{cR^\gamma},\qquad R\geq 0,$$
where $B_R=\{x\in G:\ell(x)\leq R\}$.
Then, for every $f\in C_c(G)$,
$$\sigma_{1,s}(f)=\sigma_{L^1(G)}(f)=\sigma_{C_r^*(G)}(f)=\sigma_{C^*(G)}(f).$$
If, in addition, $f=f^*$, then for every $1<q<\infty$,
$$r_{1,s}(f)=r_{L^1(G)}(f)=r_{PF_q^*(G)}(f)=r_{C_r^*(G)}(f)=r_{C^*(G)}(f).$$
In particular, $L^1(G,\omega_s)$ is quasi-symmetric.
\end{theorem}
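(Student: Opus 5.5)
The strategy is to reduce everything to the key identity
$$r_{1,s}(f)=\|\lambda_2(f)\|_{B(L^2(G))}\qquad(f=f^*\in C_c(G)),$$
and then to invoke Theorem~\ref{BHT}. Once this identity is available, the chain of inclusions
$$L^1(G,\omega_s)\hookrightarrow L^1(G)\hookrightarrow PF_q^*(G)\hookrightarrow C_r^*(G),\qquad L^1(G)\hookrightarrow C^*(G)\twoheadrightarrow C_r^*(G),$$
consists of contractive $*$-homomorphisms; for $PF_q^*\to C_r^*$ one uses Riesz--Thorin interpolation between $q$ and its conjugate $p$. Contractivity gives
$$\|\lambda_2(f)\|\le r_{C^*}(f),\,r_{PF_q^*}(f)\le r_{L^1}(f)\le r_{1,s}(f).$$
So all spectral radii coincide for hermitian $f\in C_c(G)$.

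For the spectra, apply Theorem~\ref{BHT} with $\mathcal A=L^1(G,\omega_s)$ (unitized if needed), $\mathcal S=C_c(G)$, and $\pi=\lambda_2$, which is faithful on $L^1(G,\omega_s)\subseteq L^1(G)$. This yields $\sigma_{1,s}(f)=\sigma_{B(L^2)}(\lambda_2(f))=\sigma_{C_r^*}(f)$, the last equality by inverse-closedness of $C^*$-subalgebras. Spectra can only shrink along the chain above, so
$$\sigma_{C_r^*}\subseteq\sigma_{C^*}\subseteq\sigma_{L^1}\subseteq\sigma_{1,s},$$
and all these sets are therefore equal. Quasi-symmetry then follows because $\sigma_{1,s}(f^**f)=\sigma_{C_r^*}(f^**f)\subseteq[0,\infty)$.

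The main step is proving $r_{1,s}(f)\le\|\lambda_2(f)\|$ for $f=f^*$ supported in $B_{R_0}$. Write $f^n=f^{*n}$, so that $\supp f^n\subseteq B_{nR_0}$, and therefore $\omega_s\le e^{s(nR_0)^\beta}$ on this support. By Cauchy--Schwarz and unimodularity,
$$\|f^n\|_{1,s}\le e^{s(nR_0)^\beta}\mu(B_{nR_0})^{1/2}\|f^n\|_2 .$$
Next, $\|f^n\|_2=\|\lambda_2(f)^{n-1}f\|_2\le\|\lambda_2(f)\|^{n-1}\|f\|_2$. Using the growth hypothesis $\mu(B_{nR_0})\le Ce^{c(nR_0)^\gamma}$ gives
$$\|f^n\|_{1,s}^{1/n}\le\Bigl(C^{1/2}\|f\|_2\Bigr)^{1/n}\exp\!\Bigl(\tfrac{s(nR_0)^\beta+\tfrac c2(nR_0)^\gamma}{n}\Bigr)\|\lambda_2(f)\|^{(n-1)/n}.$$
Since $\beta,\gamma<1$, the right side tends to $\|\lambda_2(f)\|$, and hence $r_{1,s}(f)\le\|\lambda_2(f)\|$. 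Because $f$ is hermitian, $\|\lambda_2(f)\|=r_{C_r^*}(f)$, which closes the chain.

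The delicate points are technical rather than conceptual. First, $B_{R}$ must be of finite measure; this holds since $\ell$ is equivalent to a word length and hence proper. Second, Theorem~\ref{BHT} must be applied to $\mathcal S=C_c(G)$, whose hermitian part is exactly what the estimate covers, with a unitization when $G$ is nondiscrete. Third, one must verify that $\lambda_2$ is faithful on $L^1(G,\omega_s)$, which is standard. I expect the spectral radius estimate itself to be the core of the proof; it is the only place where the subexponential exponents $\beta,\gamma<1$ enter.
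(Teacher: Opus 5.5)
Your proof is correct. Its analytic core is the same as the paper's: Cauchy--Schwarz on $B_{nR_0}$, the growth bound, and $\|f^{*n}\|_2\le\|\lambda_2(f)\|^{n-1}\|f\|_2$. After that the two routes diverge.

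\textbf{The paper's route.} It weakens $\|\lambda(f)\|$ to $\|f\|_{C^*(G)}$ and applies Theorem~\ref{BHT} with the universal representation of $C^*(G)$, obtaining only $\sigma_{1,s}(f)=\sigma_{C^*(G)}(f)$. The remaining identities are imported from the literature:
\begin{itemize}
\item quasi-Hermitian-ness of groups of subexponential growth \cite{pal}, combined with \cite[Theorem 4.7]{Sa}, for $\sigma_{L^1}=\sigma_{C_r^*}$;
\item amenability \cite[Corollary 4.8]{Sa}, for $C^*(G)=C_r^*(G)$;
\item the spectral interpolation triples of \cite[Corollary 4.6, Theorem 3.4]{Sa}, for $r_{PF_q^*}$.
\end{itemize}

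\textbf{Your route.} You keep the sharper bound $r_{1,s}(f)\le\|\lambda_2(f)\|$ and apply Theorem~\ref{BHT} with $\pi=\lambda_2$. Every other identity then follows by a sandwich. Each intermediate algebra receives $f$ via a homomorphism from $L^1(G,\omega_s)$ and maps contractively to $C_r^*(G)$; for $PF_q^*$ this uses Riesz--Thorin. So spectral radii and spectra can only decrease along the chain, while its two ends coincide.

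\textbf{What each approach buys.} Your argument is self-contained and does not depend on \cite{pal} or \cite{Sa}. It also yields amenability as a by-product. For $f\in C_c(G)$,
\[
\|f\|_{C^*}^2=r_{C^*}(f^{*}*f)=r_{C_r^*}(f^{*}*f)=\|\lambda_2(f)\|^2,
\]
so the full and reduced norms agree on a dense subalgebra. Your quasi-symmetry step, $\sigma_{1,s}(f^{*}*f)=\sigma_{C_r^*}(f^{*}*f)\subseteq[0,\infty)$, also matches the definition more directly than the paper's closing sentence, which only argues that hermitian elements have real spectrum. The paper's route is shorter on the page because it outsources these steps, but it relies on substantially heavier results than the statement needs.

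\textbf{Minor points.} Finiteness of $\mu(B_R)$ is already part of the growth hypothesis, so you need no separate properness argument. Unimodularity is not used in the Cauchy--Schwarz step. It is needed so that $f^*(x)=\overline{f(x^{-1})}$ is an isometric involution on $L^1(G,\omega_s)$ and $\lambda_2$ is a $*$-representation.
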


\begin{proof}
Since $G$ is unimodular and  $\omega_s$ is
submultiplicative,  we know $L^1(G,\omega_s)$ is a Banach $*$-algebra,
Write $A_s=L^1(G, \omega_s)$ and let $C_c(G)_h$ denote the set of self-adjoint elements in $C_c(G)$. Since $\omega_s$ is bounded on compact sets, we have $C_c(G)\subset A_s$ and
$C_c(G)$ is dense in $A_s$ with respect to $\|\cdot\|_{1, s}$. Since $\omega_s(x) \ge 1$, $A_s$ can  continuously embeds into  $L^1(G)$.
For $f\in C_c(G)$, assume $\supp(f) \subset B_K$ for some $K > 0$.  By the subadditivity of the length function, the support of the $n$-th convolution power satisfies $\supp(f^{*n}) \subset B_{nK}$. By Cauchy-Schwarz inequality
\begin{align}
\norm{f^{*n}}_{1, s} 
&=\int_{B_{nK}}|f^{*n}(x)|\omega_s(x)d\mu(x)\nonumber \\
&\le\left(\int_{G}|f^{*n}(x)|^2 d\mu(x)\right)^{1/2}\left(\int_{B_{nK}}\omega_s(x)^2 \ d\mu(x)\right)^{1/2} \nonumber\\
&=\norm{f^{*n}}_{L^2(G)}\left(\int_{B_{nK}}e^{2s\ell(x)^\beta} d\mu(x)\right)^{1/2} \label{cleq5}.
\end{align}
Let $\lambda$ be the left regular representation of $G$ on $L^2(G)$.
The operator norm $\|\lambda(f)\|_{B(L^2(G))}$ bounded by the enveloping  $C^*$-norm $\norm{f}_{C^*(G)}$.  Since
$f^{\ast n}=f\ast(f^{\ast (n-1)})=\lambda(f)(f^{\ast (n-1)})$,
we have
$$\|f^{*n}\|_{L^2(G)}\le\|\lambda(f)\|_{B(L^2(G))}\|f^{*(n-1)}\|_{L^2(G)}.$$
Iterating this inequality $n-1$ times yields
\begin{equation}\label{cleq6}
\|f^{*n}\|_{L^2(G)}\le\|\lambda(f)\|_{B(L^2(G))}^{n-1}  \|f\|_{L^2(G)}\le\norm{f}_{C^*(G)}^{n-1}\|f\|_{L^2(G)}.
\end{equation}
Using the growth condition $\mu(B_R)\le Ce^{c R^\gamma}$ and the bound $\omega_s(x) \le e^{s(nK)^\beta}$ on $B_{nK}$, we have
$$\int_{B_{nK}}\omega_s(x)^2d\mu(x)\le\mu(B_{nK})\sup_{x \in B_{nK}}e^{2s\ell(x)^\beta}\le C e^{c(nK)^\gamma}e^{2sn^\beta K^\beta}.$$
Combining \eqref{cleq5} and \eqref{cleq6},
\begin{align*}
\radius_{A_s}(f)
&=\lim_{n\to\infty}\norm{f^{*n}}_{1, s}^{1/n}\\
&\le\lim_{n \to \infty}\norm{f}_{C^*(G)}^{\frac{n-1}{n}}\lim_{n \to \infty} \|f\|_{L^2(G)}^{\frac{1}{n}}\lim_{n \to \infty} \left( C^{1/2}e^{\frac{cK^\gamma}{2} n^\gamma}e^{s K^\beta n^\beta}\right)^{1/n}\\
&=\norm{f}_{C^*(G)}\lim_{n \to \infty}\exp\left(\frac{\ln C}{2n}+\frac{c K^\gamma}{2} n^{\gamma-1}+s K^\beta n^{\beta-1}\right).
\end{align*}
Since $0<\gamma<1$ and $0<\beta<1$, we have the limit of the exponential term is $1$. Therefore
\begin{equation}\label{cleq8}
\radius_{A_s}(f) \le \norm{f}_{C^*(G)}.
\end{equation}
for all $f\in C_c(G)$. On the other hand, since $\omega_s\ge 1$, the inclusion map
$$\iota_s:A_s\to L^1(G),\qquad \iota_s(f)=f$$
is a continuous $*$ homomorphism. Composing with the canonical
$*$ homomorphism $L^1(G)\to C^*(G)$ yields a continuous $*$-homomorphism
$\pi_s:A_s\to C^*(G)$.
Hence spectral inclusion gives $\sigma_{C^*(G)}(\pi_s(a))\subseteq \sigma_{A_s}(a)$
for all $a\in A_s$. Therefore
$$r_{A_s}(f)\ge r_{C^*(G)}(\pi_s(f))=\|\pi_s(f)\|_{C^*(G)}=\|f\|_{C^*(G)}$$
for all $f\in C_c(G)_h$. Combining this with \eqref{cleq8} yields
\begin{equation}\label{eq:equality_Cc}
r_{A_s}(f)=\|f\|_{C^*(G)}\qquad (f\in C_c(G)_h).
\end{equation}
Now we apply Barnes-Hulanicki Theorem \cite[Theorem 2.3]{Sa} with
$$\mathcal A=A_s, \qquad \mathcal S=C_c(G),$$
and with the faithful  $*$-representation
$$\pi :=\pi_u \circ\iota\circ\iota_s: A_s \xrightarrow {\ \iota_s\ }L^1(G)\xrightarrow{\ \iota\ } C^*(G)\xrightarrow{\ \pi_u\ } B(\mathcal H_u),$$
where $\pi_u: C^*(G)\to B(\mathcal H_u)$ denotes the universal
faithful representation of the full group $C^*$-algebra $C^*(G)$. The representation $\pi$ is faithful. Indeed, $\iota_s$ is injective by
definition. The canonical map $\iota:L^1(G)\to C^*(G)$ is also injective,
if $f\in L^1(G)$ has zero full $C^*$-norm, then its reduced norm is zero,
and hence $\lambda(f)=0$ on $L^2(G)$. Since the left regular
representation of $L^1(G)$ is faithful, it follows that $f=0$. Finally,
$\pi_u$ is faithful by construction. Thus $\pi$ is faithful.
Since $C_c(G)$ is a dense $*$-subalgebra of $A_s$ and by Eq. \eqref{eq:equality_Cc}
$$r_{A_s}(f)=\norm{f}_{C^*(G)}=\norm {\pi(f)}_{B(\mathcal H_u)}, \qquad f\in\mathcal S_h,$$
the hypotheses of the Barnes--Hulanicki theorem are satisfied. Hence
$$\sigma_{A_s}(f)=\sigma_{B(H_u)}(\pi(f))$$ for all $f\in C_c(G)$.
Since $\pi_u$ is faithful, $\sigma_{B(H_u)}(\pi(f)) =\sigma_{C^*(G)}(f)$, and therefore
\begin{equation}\label{eqk1}
\sigma_{A_s}(f)=\sigma_{C^*(G)}(f)\qquad (f\in C_c(G)).
\end{equation}
Since $G$ is of subexponential growth, by \cite[Proposition 3]{pal}, $G$ is quasi-symmetric. Therefore, by
\cite[Theorem 4.7]{Sa},
\begin{equation}\label{eqk2}
\sigma_{L^1(G)}(f)=\sigma_{C_r^*(G)}(f)\qquad (f\in C_c(G)).
\end{equation}
Moreover, $G$ is amenable (see \cite[Corollary 4.8]{Sa}). Hence
$C^*(G)\cong C_r^*(G)$ canonically, and therefore
\begin{equation}\label{eqk5}
	\sigma_{C_r^*(G)}(f)=\sigma_{C^*(G)}(f)\qquad (f\in C_c(G)).
\end{equation}
Combining Eq. \eqref{eqk1}, Eq. \eqref{eqk2} and Eq. \eqref{eqk5}, gives
$$\sigma_{1,s}(f)=\sigma_{L^1(G)}(f)=\sigma_{C_r^*(G)}(f)=\sigma_{C^*(G)}(f)$$
for all $f\in C_c(G)$.
For the $PF_q^*(G)$ part, let $f\in C_c(G)_h$. If $1<q<2$, then by
\cite[Corollary 4.6]{Sa}, the triple
$$(L^1(G),PF_q^*(G),C_r^*(G))$$
is a spectral interpolation triple. Hence by \cite[Theorem 3.4]{Sa},
\begin{equation}\label{eqk4}
	r_{PF_q^*(G)}(f)=r_{C_r^*(G)}(f)\qquad (f\in C_c(G)_h).
\end{equation}
The case $q=2$ is immediate. If $q>2$, let $p$ be the conjugate exponent
of $q$. Then $1<p<2$, and by the standard duality identification
$$PF_p^*(G)=PF_q^*(G)$$
isometrically as Banach $*$-algebras. Therefore Eq. \eqref{eqk4} holds
for all $1<q<\infty$. This proves
$$r_{1,s}(f)=r_{L^1(G)}(f)=r_{PF_q^*(G)}(f)=r_{C_r^*(G)}(f)=r_{C^*(G)}(f)$$ 
for all $f\in C_c(G)_h$. Finally, the spectrum of a self-adjoint element in a $C^*$-algebra is real. The spectral identities above therefore imply that every
self-adjoint element of $C_c(G)$ has real spectrum in both
$L^1(G,\omega_s)$.  Hence $L^1(G,\omega_s)$  is quasi-symmetric.
\end{proof}

\begin{remark}
The theorem does not assert that
$\sigma_{PF_q^*(G)}(f)=\sigma_{C_r^*(G)}(f)$
for arbitrary $f\in C_c(G)$. Indeed, interpolation gives a canonical
continuous $*$-homomorphism
$$\Phi_q:PF_q^*(G)\to C_r^*(G),$$
but this map is not known to be injective in general. Hence the
spectral-radius identity for self-adjoint elements of $C_c(G)$ does
not by itself identify the spectra in $PF_q^*(G)$ and $C_r^*(G)$.
If, in addition, the canonical homomorphism $\Phi_q$ is injective and
the hypotheses of the Barnes--Hulanicki theorem are satisfied, then we obtain
	$$\sigma_{PF_q^*(G)}(f)=\sigma_{C_r^*(G)}(f)\qquad(f\in C_c(G)).$$
Consequently, under this additional injectivity assumption, for all  $f\in C_c(G)$ we get 
$$\sigma_{1,s}(f)=\sigma_{L^1(G)}(f)=\sigma_{PF_q^*(G)}(f)=\sigma_{C_r^*(G)}(f)=\sigma_{C^*(G)}(f).$$
Thus, these Banach $*$-algebras are quasi-symmetric.
\end{remark}

\section{Gevrey regularity}

In this section we construct a Gevrey-Beurling-type smooth subalgebra of
the algebra of $q$-pseudofunctions and prove that it is stable under
inversion and holomorphic functional calculus. The resulting
inverse-closedness theorem may be viewed as a Gevrey-type noncommutative
Wiener lemma. We then apply this result to convolution operators with
kernels in Gevrey-Beurling type space and obtain quantitative Gevrey regularity estimates for their inverses. Our approach
is inspired by the general principle, developed by Gr\"ochenig and Klotz
in \cite{Grk1,Grk5}, that suitable smoothness conditions on a Banach
subalgebra lead to inverse-closedness and norm-controlled inversion.

In our setting, smoothness is measured by iterated commutators with the
multiplication operator associated with a length function $\ell$. The
resulting estimates show that the $k$-th commutator grows at most like
$(k!)^{1/\beta}$, which is precisely of Gevrey type. The analytic input
is the subexponential weight
$$\omega_s(x)=e^{s\ell(x)^\beta},\qquad s>0,\qquad 0<\beta<1,$$
together with the strong subexponential growth condition $(SG_\beta)$.
This condition ensures that the Gevrey-Beurling type space
$$S_q^\infty(G)=\bigcap_{s>0}L^q(G,\omega_s)$$
embeds continuously into the weighted group algebra needed for convolution
estimates. On the operator side, the Dales-Davie algebra associated with the weight
sequence
$M_k={(k!)^{1/\beta}}/{R^k}$
provides the appropriate abstract framework for quantitative inversion;
see \cite[Theorem 2.18]{Grk5}. Combining these estimates yields a
Gevrey version of the noncommutative Wiener lemma for convolution
operators on $L^q(G)$. Related norm-controlled inversion phenomena for weighted convolution
algebras were studied in \cite{Sa1}, where sufficient conditions on
polynomial and certain subexponential weights were obtained to ensure
norm-controlled inversion in the reduced group $C^*$-algebra.
Since  Gevrey-Beurling type space  $S_q^\infty(G)$
is equipped with the projective limit topology induced by the norms
$$\|f\|_{q,s}:=\Big(\int_G|f(x)|^q\omega_s(x)^qd\mu(x)\Big)^{1/q},$$
and $\|f\|_{q,s}$ is monotonically increasing with respect to
$s$, the topology of $S_q^\infty(G)$
is determined by a countable family of norms, making it naturally a Fr\'echet space. We formalize this in the following lemma. As the proof is routine, it is left to the reader.

\begin{lemma}\label{lem1}
Let $\{s_n\}_{n\in\mathbb N}\subset (0,\infty)$ be an increasing sequence such that $s_n\to\infty$, and define
	$$\|f\|_{q,n}:=\left(\int_G |f(x)|^q\omega_{s_n}(x)^qd\mu(x)\right)^{1/q}.$$
Then $$S_q^\infty(G)=\bigcap_{n=1}^\infty L^q(G,\omega_{s_n}).$$
Furthermore, $S_q^\infty(G)$ is a Fr\'echet space when endowed with the locally convex topology generated by the countable family of norms
$\{\|\cdot\|_{q,n}\}_{n\in\mathbb N}$.
\end{lemma}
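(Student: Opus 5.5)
The plan is to prove the two assertions of Lemma~\ref{lem1} in turn: first the equality of the intersections, then completeness. Both rest on the monotonicity of the norms in $s$. Since $\ell\ge 0$ and $0<\beta<1$, the map $s\mapsto\omega_s(x)=e^{s\ell(x)^\beta}$ is nondecreasing for every $x\in G$. Hence $\|f\|_{q,s}\le\|f\|_{q,t}$ whenever $s\le t$, and so $L^q(G,\omega_t)\subseteq L^q(G,\omega_s)$.

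\emph{Equality of the intersections.} The inclusion $S_q^\infty(G)\subseteq\bigcap_n L^q(G,\omega_{s_n})$ is trivial. For the reverse inclusion, fix $s>0$ and $f\in\bigcap_n L^q(G,\omega_{s_n})$. Because $s_n\to\infty$, I will choose $n$ with $s_n\ge s$. Monotonicity then gives $\|f\|_{q,s}\le\|f\|_{q,n}<\infty$, so $f\in L^q(G,\omega_s)$ for every $s>0$. The same comparison shows that the countable family $\{\|\cdot\|_{q,n}\}$ and the full family $\{\|\cdot\|_{q,s}\}_{s>0}$ generate the same locally convex topology. That topology is Hausdorff, since already $\|\cdot\|_{q,1}$ is a norm. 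It is also metrizable, for instance by
$$d(f,g)=\sum_n 2^{-n}\min\{1,\|f-g\|_{q,n}\}.$$

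\emph{Completeness.} This is the only step with real content. Let $(f_k)$ be Cauchy for every $\|\cdot\|_{q,n}$. Each space $L^q(G,\omega_{s_n})$ is the weighted Lebesgue space $L^q(G,\omega_{s_n}^q\,d\mu)$, which is a Banach space. So for each $n$ there is a limit $g_n$ with $f_k\to g_n$ in $\|\cdot\|_{q,n}$.

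The point to check is that all the $g_n$ coincide. Since $\omega_{s_n}\ge 1$, convergence in $\|\cdot\|_{q,n}$ implies convergence in unweighted $L^q(G)$. Consequently every $g_n$ equals the $L^q$-limit $g$ of $(f_k)$ almost everywhere (alternatively, pass to an a.e.\ convergent subsequence). This common limit $g$ lies in every $L^q(G,\omega_{s_n})$, and $f_k\to g$ in each norm. By the first part, $g\in S_q^\infty(G)$. Thus $S_q^\infty(G)$ is a complete metrizable locally convex space, i.e.\ a Fréchet space.
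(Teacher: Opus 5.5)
Your proof is correct and takes the route the paper has in mind. The paper leaves this lemma to the reader and points only to the monotonicity of $\|\cdot\|_{q,s}$ in $s$ as the reason the topology is countably generated and Fr\'echet; your argument fills in the routine details, namely cofinality of $\{s_n\}$ (for the equality of the intersections and of the topologies), completeness of each weighted $L^q$ space, and identification of the limits through convergence in unweighted $L^q(G)$ using $\omega_{s_n}\ge 1$.
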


\medskip

For later use, we introduce the following growth condition.
\begin{defn}
Let $0<\beta<1$. We say that $G$ satisfies the \emph{$\beta$-strong
subexponential growth condition, denoted by $(SG_\beta)$}, if there exists
$0<\gamma\le \beta$ such that for every $c>0$ there is a constant
$C_c>0$ satisfying
$$\mu(B_R)\le C_c e^{cR^\gamma},\qquad R\ge 0.$$
Here $\beta$ is the exponent appearing in the weights
$\omega_s(x)=e^{s\ell(x)^\beta}$.
\end{defn}

We first  establishes that $S_q^\infty(G)$ is continuously embedded into every weighted group  algebra $L^1(G,\omega_s)$.
\begin{lemma}\label{lem2}
Assume that $G$  satisfies condition $(SG_\beta)$ with a proper locally bounded length function $\ell$. Let $1<q<\infty$, $p=\frac{q}{q-1}$, and $s>0$. Then, for every $t>s$, there exists a constant $M_{s,t}>0$ such that
$$\|f\|_{1,s}\le M_{s,t}\|f\|_{q,t} \qquad \text{for all } f\in C_c(G).$$
In particular, $S_q^\infty(G)\subseteq L^1(G,\omega_s)$ for every $s>0$.
\end{lemma}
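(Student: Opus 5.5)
The plan is to apply H\"older's inequality with the weight split as $\omega_s=\omega_t\cdot\omega_{s-t}$. For $f\in C_c(G)$ we write
$$\|f\|_{1,s}=\int_G |f(x)|e^{t\ell(x)^\beta}\,e^{-(t-s)\ell(x)^\beta}\,d\mu(x)\le \|f\|_{q,t}\left(\int_G e^{-p(t-s)\ell(x)^\beta}\,d\mu(x)\right)^{1/p}.$$
The lemma then reduces to showing that
$$I:=\int_G e^{-\varepsilon\ell(x)^\beta}\,d\mu(x)<\infty,\qquad \varepsilon=p(t-s)>0,$$
and we set $M_{s,t}=I^{1/p}$.

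To bound $I$, decompose $G$ into the shells $A_k=B_{k+1}\setminus B_k$ for $k\ge 0$, together with $B_0$. Since $\ell$ is proper, every ball $B_R$ is compact, so each $\mu(B_R)$ is finite and the decomposition is legitimate. On $A_k$ we have $e^{-\varepsilon\ell^\beta}\le e^{-\varepsilon k^\beta}$, which gives
$$I\le \mu(B_1)+\sum_{k\ge1}\mu(B_{k+1})\,e^{-\varepsilon k^\beta}.$$
Now we invoke $(SG_\beta)$. There is some $\gamma\le\beta$ such that for every $c>0$ we have $\mu(B_{k+1})\le C_c e^{c(k+1)^\gamma}$. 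Since $\gamma\le\beta$ and $k+1\ge1$, this gives $(k+1)^\gamma\le(k+1)^\beta\le 2^\beta k^\beta$ for $k\ge1$. We then choose $c=\varepsilon/2^{\beta+1}$, so that
$$\mu(B_{k+1})e^{-\varepsilon k^\beta}\le C_c e^{-\varepsilon k^\beta/2}.$$
The series $\sum_k e^{-\varepsilon k^\beta/2}$ converges because $e^{-a k^\beta}\le C_N k^{-N}$ for every $N$, or alternatively by comparison with $\int_0^\infty e^{-a u^\beta}\,du=a^{-1/\beta}\Gamma(1+1/\beta)$. Hence $I<\infty$.

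The main obstacle is exactly this growth-versus-decay comparison. It is the only point where the hypotheses matter: we need the \emph{strong} form of the growth condition, namely that $c$ can be taken arbitrarily small, together with $\gamma\le\beta$. These are what allow the volume growth to be absorbed by the arbitrarily small decay gap $t-s$. With mere growth bounded by $e^{cR^\beta}$ for a fixed $c$, the argument would only work when $t-s$ is large.

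For the final assertion, take $f\in S_q^\infty(G)$ and fix any $t>s$. The H\"older step above does not use compact support; it uses only measurability and $\|f\|_{q,t}<\infty$. Therefore $\|f\|_{1,s}\le M_{s,t}\|f\|_{q,t}<\infty$, so $f\in L^1(G,\omega_s)$. Alternatively, one can extend from $C_c(G)$ by density together with Fatou's lemma. This gives the inclusion $S_q^\infty(G)\subseteq L^1(G,\omega_s)$, and the embedding is continuous for the Fr\'echet topology of Lemma~\ref{lem1}.
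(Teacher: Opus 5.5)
Your proof is correct and follows the paper's route: H\"older's inequality reduces the claim to $e^{-p(t-s)\ell^\beta}\in L^1(G)$, and a shell decomposition of $G$ combined with the $(SG_\beta)$ bound gives that integrability. The only difference is cosmetic: you use $(k+1)^\gamma\le 2^\beta k^\beta$ with the single choice $c=\varepsilon/2^{\beta+1}$, which replaces the paper's case split between $\gamma<\beta$ and $\gamma=\beta$.
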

\begin{proof}
For $s>0$, by Holder's inequality, we have
\begin{align*}
\|f\|_{1,s}=\int_G |f(x)|e^{s\ell(x)^\beta}\,d\mu(x)
&=\int_G \bigl(|f(x)|e^{t\ell(x)^\beta}\bigr)e^{-(t-s)\ell(x)^\beta}d\mu(x)\\	
&\le \|f\|_{q,t}\|e^{-(t-s)\ell^\beta}\|_{L^{p}(G)}.
\end{align*}
It remains to show that
$$e^{-(t-s)\ell^\beta}\in L^p(G).$$
Set $\delta=p(t-s)>0$.
Choose $c>0$ as follows: if $\gamma<\beta$, take any $c>0$, if
$\gamma=\beta$, choose $c$ such that $0<c<\delta$. By the strong
subexponential growth assumption, there exists $C_c>0$ such that
$$\mu(B_R)\le C_c e^{cR^\gamma},\qquad R\ge 0.$$
Let $B_k=\{x\in G:\ell(x)\le k\}$ and
$A_k=B_k\setminus B_{k-1}$ for $k\ge 1$, with $A_0=B_0$. For $x\in A_k$ we
have $\ell(x)> k-1$, hence
\begin{align*}
\|e^{-(t-s)\ell^\beta}\|^{p }_{L^{p}(G)}=\int_G e^{-\delta\ell(x)^\beta}d\mu(x)
&=\sum_{k=0}^\infty \int_{A_k} e^{-\delta\ell(x)^\beta}d\mu(x)\\
&\le \mu(B_0)+\sum_{k=1}^\infty e^{-\delta(k-1)^\beta}\mu(A_k).
\end{align*}
Using $\mu(A_k)\le\mu(B_k)\le C_ce^{ck^\gamma}$, we obtain
\begin{equation}\label{eq41}
\int_Ge^{-\delta\ell(x)^\beta}d\mu(x)
\le \mu(B_0)+C_c\sum_{k=1}^\infty e^{\bigl(-\delta (k-1)^\beta+c k^\gamma\bigr)}.
\end{equation}
If $\gamma<\beta$, then $k^\gamma=o(k^\beta)$ as $k\to\infty$. Hence there exists
$k_0\in\mathbb N$ such that for all $k\ge k_0$,
$$ck^\gamma \le \frac{\delta}{2}(k-1)^\beta.$$
Therefore
$-\delta(k-1)^\beta+ck^\gamma\le-\frac{\delta}{2}(k-1)^\beta$,
and so
$$e^{-\delta(k-1)^\beta+ck^\gamma}\le e^{-\frac{\delta}{2}(k-1)^\beta}\qquad (k\ge k_0).$$
Therefore the series in \ref{eq41} converges.
If $\gamma=\beta$, then $c<\delta=p(t-s)$. Since
$(k-1)^\beta\sim k^\beta$, there exists $\varepsilon>0$ and $k_0\in\mathbb N$ such that
for all $k\ge k_0$,
$$-\delta(k-1)^\beta+ck^\beta\le-\varepsilon k^\beta.$$
Hence the series in \ref{eq41} converges also in this case.
Therefore, set
$$M_{s,t}=\|e^{-(t-s)\ell^\beta}\|_{L^{p}(G)}<\infty,$$
and then
$$\|f\|_{1,s}\le M_{s,t}\|f\|_{q,t}.$$ 
Finally, if $f\in S_q^\infty(G)$, then $\|f\|_{L^q(G,w_t)}<\infty$ for every $t>0$.
Given any $s>0$, choose $t>s$, the above estimate yields
$f\in L^1(G,\omega_s)$. Since $s>0$ was arbitrary, we conclude that
$f\in L^1(G,\omega_s)$ for every $s>0$. This proves the assertion.
\end{proof}

The following lemma shows $S_q^\infty(G)$ is a Fr\'echet $*$-algebra.
\begin{lemma}
Assume that $G$ is unimodular and satisfies condition $(SG_\beta)$ with
respect to a proper locally bounded length function $\ell$. Let
$1<q<\infty$, $p=\frac{q}{q-1}$, and let $s>0$. Then, for every $t>s$,
there exists a constant $C_{s,t}>0$ such that
$$\|f*g\|_{q,s}\le C_{s,t}\|f\|_{q,t}\|g\|_{q,s},\qquad \|f*g\|_{q,s}
\le C_{s,t}\|f\|_{q,s}\|g\|_{q,t}$$
for all $f,g\in S_q^\infty(G)$.
Moreover, $S_q^\infty(G)$ is closed under involution and
$$\|f^*\|_{q,s}=\|f\|_{q,s}.$$
Consequently, endowed with the projective limit topology generated by
$\{\|\cdot\|_{q,n}\}_{n\in\mathbb N}$, $S_q^\infty(G)$ is a Fr\'echet
$*$-algebra.
\end{lemma}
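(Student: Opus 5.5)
The plan is to reduce both convolution inequalities to a weighted Young-type estimate, using the submultiplicativity of $\omega_s$ from Definition~\ref{def1} and the $L^1$-embedding of Lemma~\ref{lem2}.

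For $f,g\in S_q^\infty(G)$ and $x\in G$, the subadditivity $\ell(x)^\beta\le \ell(y)^\beta+\ell(y^{-1}x)^\beta$ gives
$$|f*g(x)|\,\omega_s(x)\le \int_G |f(y)|\omega_s(y)\,|g(y^{-1}x)|\omega_s(y^{-1}x)\,d\mu(y)=\bigl(|f|\omega_s\bigr)*\bigl(|g|\omega_s\bigr)(x).$$
Minkowski's integral inequality (Young's inequality $\|F*H\|_q\le\|F\|_1\|H\|_q$ for left convolution, which uses only left invariance of $\mu$) then yields
$$\|f*g\|_{q,s}\le \|f\|_{1,s}\,\|g\|_{q,s}.$$
Lemma~\ref{lem2} gives $\|f\|_{1,s}\le M_{s,t}\|f\|_{q,t}$ for $t>s$. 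It is stated for $C_c(G)$, but the Hölder argument in its proof applies verbatim to any $f\in L^q(G,\omega_t)$. This proves the first inequality with $C_{s,t}=M_{s,t}$.

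For the second inequality I would put the $L^1$ factor on $g$ instead. Young's inequality in the form $\|F*H\|_q\le \|F\|_q\,\|\check H\|_1$ requires right invariance, and here unimodularity enters. Write
$$F*H(x)=\int_G F(xz^{-1})H(z)\,d\mu(z),$$
which uses unimodularity to change variables $y=xz^{-1}$. Minkowski's inequality with the right invariance of $\mu$ then gives $\|F*H\|_q\le \|F\|_q\|H\|_1$. Applied to $F=|f|\omega_s$ and $H=|g|\omega_s$, this yields $\|f*g\|_{q,s}\le \|f\|_{q,s}\,\|g\|_{1,s}\le M_{s,t}\|f\|_{q,s}\|g\|_{q,t}$.

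For the involution, $|f^*(x)|=|f(x^{-1})|$ and $\omega_s(x^{-1})=\omega_s(x)$. Unimodularity makes $x\mapsto x^{-1}$ measure preserving, so $\|f^*\|_{q,s}=\|f\|_{q,s}$ and $f^*\in S_q^\infty(G)$.

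The Fréchet $*$-algebra conclusion then follows from Lemma~\ref{lem1}. The inequalities show that $f*g\in S_q^\infty(G)$ and that multiplication is jointly continuous: $\|f*g\|_{q,s}\le C\|f\|_{q,t}\|g\|_{q,t}$, using monotonicity of the norms in $s$. The involution is isometric for each seminorm, hence continuous. Associativity of convolution holds because all functions involved lie in $L^1(G)$ by Lemma~\ref{lem2}.

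The main obstacle is the legitimacy of the Minkowski and Fubini steps for general (non-compactly supported) elements, together with correct bookkeeping of the modular function in the right-sided Young inequality. I would handle the first point by working with the nonnegative functions $|f|\omega_s$ and $|g|\omega_s$, where Tonelli applies, so that $f*g$ is defined almost everywhere.
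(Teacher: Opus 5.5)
Your proposal is correct and follows the paper's proof essentially step for step: the pointwise bound $|f*g|\,\omega_s\le (|f|\omega_s)*(|g|\omega_s)$ from submultiplicativity, then Young's inequality combined with Lemma~\ref{lem2}, then unimodularity for the involution. Two of your additions only make explicit what the paper leaves implicit: your right-sided Young inequality for the second estimate (where unimodularity is genuinely needed) fills in what the paper dismisses with ``similarly'', and your remark that Lemma~\ref{lem2} extends beyond $C_c(G)$ justifies a step the paper uses tacitly.
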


\begin{proof}
Fix $s>0$ and choose any $t>s$.
By Lemma~\ref{lem2}, there exists $M_{s,t}>0$ such that
$$\|h\|_{1,s}\le M_{s,t}\|h\|_{q,t}\qquad \text{for all} \ h\in S_q^\infty(G)).$$
We first prove the two convolution estimates. For $f,g\in S_q^\infty(G)$, let $F_s:=|f|\omega_s$ and $H_s:=|g|\omega_s$. Since $f\in L^1(G,\omega_s)$ and $g\in L^q(G,\omega_s)$, we have
$F_s\in L^1(G)$ and $H_s\in L^q(G)$.  Young's inequality implies that
$F_s*H_s\in L^q(G)$, hence $(F_s*H_s)(x)<\infty$ for almost every $x\in G$. Using $\omega_s(x)\le\omega_s(y)\omega_s(y^{-1}x)$,
we obtain
$$|f(y)||g(y^{-1}x)|\le\omega_s(x)^{-1}F_s(y)H_s(y^{-1}x).$$
Therefore
$$\int_G |f(y)||g(y^{-1}x)|\,dy\le\omega_s(x)^{-1}(F_s*H_s)(x)<\infty.$$
Hence $f*g(x)$ is well defined for almost every $x$, and
$$|(f*g)(x)|\omega_s(x)\le (F_s*H_s)(x).$$
By Young's inequality and Lemma \ref{lem2},
$$\|f*g\|_{q,s}\le\|F_s*H_s\|_q\le\|F_s\|_1\|H_s\|_q=\|f\|_{1,s}\|g\|_{q,s}
\le M_{s,t}\|f\|_{q,t}\|g\|_{q,s}.$$
Similarly,
$$\|f*g\|_{q,s}\le M_{s,t}\|f\|_{q,s}\|g\|_{q,t}.$$
Taking $C_{s,t}=M_{s,t}$ proves the two estimates.  Since $s>0$ was
arbitrary, it follows that $f*g\in S_q^\infty(G)$.
Now we prove stability under involution. Since  $f^*(x)=\overline{f(x^{-1})}$, by unimodularity of $G$ and
$\ell(x^{-1})=\ell(x)$, we have
$$\|f^*\|_{q,s}^q=\int_G |f(x^{-1})|^q\omega_s(x)^qd\mu(x)=\int_G|f(y)|^q \omega_s(y)^qd\mu(y)=\|f\|_{q,s}^q.$$
Thus $S_q^\infty(G)$ is closed under involution.
Finally, by Lemma~\ref{lem1}, $S_q^\infty(G)$ is a Fr\'echet space. To
show that convolution is jointly continuous, it is enough to check the
seminorm estimates. For each $s>0$, choose $t>s$. Since $\omega_s\le
\omega_t$, we have
$$\|h\|_{q,s}\le \|h\|_{q,t}\qquad(h\in S_q^\infty(G)).$$
Therefore, for all $f,g\in S_q^\infty(G)$, 
$$\|f*g\|_{q,s}\le C_{s,t}\|f\|_{q,t}\|g\|_{q,s}
\le C_{s,t}\|f\|_{q,t}\|g\|_{q,t}.$$
This proves joint continuity of convolution. The involution is continuous
because $\|f^*\|_{q,s}=\|f\|_{q,s}$
for every $s>0$. Hence $S_q^\infty(G)$ is a Fr\'echet $*$-algebra.
\end{proof}

\begin{defn}\label{def45}
Assume that  $G$ is a locally compact group with a
proper locally bounded length function $\ell$. Let $\ell$ be the multiplication operator on $L^q(G)$ associated with the length function $\ell$, namely
$$\mathrm{Dom}(D_\ell)=\{\xi\in L^q(G):\ell\xi\in L^q(G)\},
\qquad D_\ell\xi=\ell\xi.$$
We define an operator $\delta_\ell$ on $B(L^q(G))$ as follows. Its domain is
\begin{align*}
\mathrm{Dom}(\delta_\ell)
&=\Big\{T\in B(L^q(G)):T(\mathrm{Dom}(D_\ell))\subseteq \mathrm{Dom}(D_\ell) \text{ and } \\
&[D_\ell,T]\big|_{\mathrm{Dom}(D_\ell)}\text{ extends to a bounded operator on }L^q(G)\Big\}.
\end{align*}
For $T\in\mathrm{Dom}(\delta_\ell)$, we set
$$\delta_\ell(T)=\overline{[D_\ell,T]\big|_{\mathrm{Dom}(D_\ell)}},$$ 
where the bar denotes the unique bounded extension to $L^q(G)$.
\end{defn}

\begin{lemma}\label{closed-derivation}
The operator $\delta_\ell$ defined in Definition~\ref{def45} is a closed
derivation on $B(L^q(G))$. More precisely,
$\operatorname{Dom}(\delta_\ell)$ is a subalgebra of $B(L^q(G))$, and for	all $A,B\in\operatorname{Dom}(\delta_\ell)$,
	$$\delta_\ell(AB)=\delta_\ell(A)B+A\delta_\ell(B).$$
\end{lemma}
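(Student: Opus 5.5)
The plan is to verify three things in order: that $\operatorname{Dom}(\delta_\ell)$ is a subalgebra on which the Leibniz rule holds, that $\delta_\ell$ is linear, and that its graph is closed in $B(L^q(G))\times B(L^q(G))$.

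\textbf{Linearity and the Leibniz rule.} Linearity is immediate. If $A,B$ map $\operatorname{Dom}(D_\ell)$ into itself, so do $A+B$ and $\lambda A$. On $\operatorname{Dom}(D_\ell)$ we have $[D_\ell,A+B]=[D_\ell,A]+[D_\ell,B]$, and a bounded extension of a densely defined operator is unique. Here $\operatorname{Dom}(D_\ell)$ is dense in $L^q(G)$: it contains all compactly supported $L^q$ functions, because $\ell$ is locally bounded, and these are dense for $1<q<\infty$.

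For products, take $A,B\in\operatorname{Dom}(\delta_\ell)$. Then $AB$ maps $\operatorname{Dom}(D_\ell)$ into itself, since $B$ does and then $A$ does. For $\xi\in\operatorname{Dom}(D_\ell)$ we have $B\xi\in\operatorname{Dom}(D_\ell)$, so the algebraic identity
$$[D_\ell,AB]\xi=D_\ell AB\xi-AD_\ell B\xi+AD_\ell B\xi-ABD_\ell\xi=[D_\ell,A](B\xi)+A[D_\ell,B]\xi$$
is legitimate. The right-hand side equals $\delta_\ell(A)B\xi+A\delta_\ell(B)\xi$, which is the restriction of a bounded operator. Hence $AB\in\operatorname{Dom}(\delta_\ell)$, and by uniqueness of bounded extensions $\delta_\ell(AB)=\delta_\ell(A)B+A\delta_\ell(B)$.

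\textbf{Closedness.} This is the step I expect to be the main obstacle. Suppose $T_n\in\operatorname{Dom}(\delta_\ell)$ with $T_n\to T$ and $\delta_\ell(T_n)\to S$ in operator norm. We must show two things: that $T$ preserves $\operatorname{Dom}(D_\ell)$, and that $[D_\ell,T]=S$ there.

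Fix $\xi\in\operatorname{Dom}(D_\ell)$. Then
$$D_\ell T_n\xi=\delta_\ell(T_n)\xi+T_nD_\ell\xi\to S\xi+TD_\ell\xi \quad\text{in } L^q(G),$$
while $T_n\xi\to T\xi$. The multiplication operator $D_\ell$ is closed: it is a maximal multiplication operator, since $L^q$ convergence yields an a.e.\ convergent subsequence, so limits can be identified pointwise. Closedness of $D_\ell$ therefore gives $T\xi\in\operatorname{Dom}(D_\ell)$ and $D_\ell T\xi=S\xi+TD_\ell\xi$. Thus $[D_\ell,T]\xi=S\xi$ on $\operatorname{Dom}(D_\ell)$; this extends boundedly to $S$, so $T\in\operatorname{Dom}(\delta_\ell)$ and $\delta_\ell(T)=S$.

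The only points needing care are the closedness of the maximal multiplication operator and the density of its domain. Both are standard, and I would verify them explicitly, including $\sigma$-finiteness issues if $G$ is not $\sigma$-compact, where one works locally.
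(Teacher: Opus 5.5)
Your proposal is correct and follows the paper's proof essentially step for step. Closedness comes from applying the closedness of the multiplication operator $D_\ell$ to $D_\ell T_n\xi=T_nD_\ell\xi+\delta_\ell(T_n)\xi$, and the Leibniz rule comes from the commutator identity on $\operatorname{Dom}(D_\ell)$; your explicit checks of linearity, of the density of $\operatorname{Dom}(D_\ell)$ (needed for uniqueness of the bounded extension), and of the closedness of $D_\ell$ fill in points the paper leaves implicit.
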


\begin{proof}
The multiplication operator $D_\ell$ is closed. Suppose that
$T_n\in\operatorname{Dom}(\delta_\ell)$, $T_n\to T$, and
$\delta_\ell(T_n)\to R$ in $B(L^q(G))$. For every
$\xi\in\operatorname{Dom}(D_\ell)$, we have
$$D_\ell T_n\xi=T_nD_\ell\xi+\delta_\ell(T_n)\xi.$$
The right-hand side converges in $L^q(G)$ to
$TD_\ell\xi+R\xi$, while $T_n\xi\to T\xi$.
Since $D_\ell$ is closed, we get
$T\xi\in\operatorname{Dom}(D_\ell)$ and
$$D_\ell T\xi=TD_\ell\xi+R\xi.$$
Thus, for $\xi\in\operatorname{Dom}(D_\ell)$, we have 
$[D_\ell,T]\xi=R\xi$.
Hence $T\in\operatorname{Dom}(\delta_\ell)$ and $\delta_\ell(T)=R$.
Therefore $\delta_\ell$ is closed. Now let $A,B\in\operatorname{Dom}(\delta_\ell)$. Since both $A$ and $B$ preserve $\operatorname{Dom}(D_\ell)$, we have
$$AB(\operatorname{Dom}(D_\ell))\subseteq\operatorname{Dom}(D_\ell).$$
Moreover, act on $\operatorname{Dom}(D_\ell)$, we have
$$[D_\ell,AB]=[D_\ell,A]B+A[D_\ell,B].$$
Since $[D_\ell,A]\big|_{\operatorname{Dom}(D_\ell)}$ and
$[D_\ell,B]\big|_{\operatorname{Dom}(D_\ell)}$ extend to the bounded
operators $\delta_\ell(A)$ and $\delta_\ell(B)$, respectively, the
operator $[D_\ell,AB]\big|_{\operatorname{Dom}(D_\ell)}$
extends to
$$\delta_\ell(A)B+A\delta_\ell(B)\in B(L^q(G)).$$
Therefore $AB\in\operatorname{Dom}(\delta_\ell)$ and
$$\delta_\ell(AB)=\delta_\ell(A)B+A\delta_\ell(B).$$
Thus $\delta_\ell$ is a derivation. In addition, for this closed derivation, the iterated domains are defined by 
$\operatorname{Dom}(\delta_\ell^0)=B(L^q(G))$,
and, for $k\ge 1$,
$$\operatorname{Dom}(\delta_\ell^k)
=\left\{T\in \operatorname{Dom}(\delta_\ell^{k-1}):
\delta_\ell^{k-1}(T)\in \operatorname{Dom}(\delta_\ell)
\right\}.$$
\end{proof}

Before proceeding to the main norm estimates, it is necessary to verify that the convolution operators with the kernel in  $S_q^\infty(G)$ possess smoothness with respect to the derivation
$\delta_\ell$ in Definition \ref{def45}. The following lemma establishes that these operators belong
to the domain of every iterate of $\delta_\ell$,  ensuring that all iterated commutators are well-defined and extend to bounded operators on $L^q(G)$.
.

\begin{lemma}\label{chensmooth}
Assume that  $G$ is unimodular and satisfies condition $(SG_\beta)$ with respect to a
proper locally bounded length function $\ell$. Let $\delta_\ell$ be the derivation defined in Definition \ref{def45}. Then
$$\lambda_q(S_q^\infty(G))\subseteq
\bigcap_{k=0}^{\infty}\operatorname{Dom}(\delta_\ell^k).$$
More precisely, for every $f\in S_q^\infty(G)$ and every
$k\in\mathbb N$, the operator $\lambda_q(f)$ belongs to
$\operatorname{Dom}(\delta_\ell^k)$, and $\delta_\ell^k(\lambda_q(f))$
extends to a bounded operator on $L^q(G)$.
\end{lemma}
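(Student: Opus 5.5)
The approach is to identify $\delta_\ell^k(\lambda_q(f))$ explicitly as a convolution-type integral operator with kernel
\[
K_k(x,y)=\bigl(\ell(x)-\ell(y)\bigr)^k f(xy^{-1})\quad\text{(or } f(y^{-1}x)\text{, in the convention } \lambda_q(f)\xi(x)=\textstyle\int_G f(y)\xi(y^{-1}x)\,d\mu(y)\text{)}.
\]
We then bound these operators on $L^q(G)$ by a weighted $L^1$-norm of $f$, using Lemma~\ref{lem2}. Concretely, for $\xi\in L^q(G)$ write
\[
\lambda_q(f)\xi(x)=\int_G f(xz^{-1})\,\xi(z)\,d\mu(z),
\]
using unimodularity. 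Define $f_k(y):=\ell(y)^k|f(y)|$. By the triangle inequality for $\ell$,
\[
|\ell(x)-\ell(z)|\le \ell(xz^{-1}).
\]
Hence the kernel $(\ell(x)-\ell(z))^k f(xz^{-1})$ is dominated pointwise by $f_k(xz^{-1})$. The operator $T_k$ with this kernel therefore satisfies $\|T_k\|_{B(L^q(G))}\le \|f_k\|_{L^1(G)}$ by Young's inequality (or Minkowski's integral inequality).

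To see that $f_k\in L^1(G)$, use the elementary bound
\[
t^k\le \Bigl(\frac{k}{s\beta e}\Bigr)^{k/\beta} e^{s t^\beta}.
\]
This gives $\|f_k\|_1\le C_{k,s}\|f\|_{1,s}$, which is finite by Lemma~\ref{lem2} since $f\in S_q^\infty(G)$. Thus every $T_k$ is bounded; these constants are also the source of the $(k!)^{1/\beta}$ growth used later.

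Next I prove by induction on $k$ that $\lambda_q(f)\in\operatorname{Dom}(\delta_\ell^k)$ and $\delta_\ell^k(\lambda_q(f))=T_k$, with $T_0=\lambda_q(f)$. It suffices to show: if $T$ is an integral operator with kernel $K(x,z)=(\ell(x)-\ell(z))^k f(xz^{-1})$, dominated as above, then $T\in\operatorname{Dom}(\delta_\ell)$ and $\delta_\ell(T)=T_{k+1}$. For $\xi\in\operatorname{Dom}(D_\ell)$ we have formally
\[
\ell(x)(T\xi)(x)=\int_G K(x,z)\,\ell(z)\,\xi(z)\,d\mu(z)+\int_G (\ell(x)-\ell(z))K(x,z)\,\xi(z)\,d\mu(z),
\]
that is, $\ell\cdot T\xi = T(\ell\xi)+T_{k+1}\xi$. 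Both terms on the right lie in $L^q(G)$: the first because $\ell\xi\in L^q$ and $T$ is bounded, the second because $T_{k+1}$ is bounded. Hence $T\xi\in\operatorname{Dom}(D_\ell)$, and $[D_\ell,T]\xi=T_{k+1}\xi$ on $\operatorname{Dom}(D_\ell)$. This extends boundedly to $T_{k+1}$, which is exactly membership in $\operatorname{Dom}(\delta_\ell)$ in Definition~\ref{def45}.

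The main technical point is justifying the splitting of the integral, i.e.\ absolute convergence for a.e.\ $x$. I would handle it by noting that $|K(x,z)|\,|\ell(z)\xi(z)|$ and $|\ell(x)-\ell(z)|\,|K(x,z)|\,|\xi(z)|$ are dominated by $f_k(xz^{-1})|(\ell\xi)(z)|$ and $f_{k+1}(xz^{-1})|\xi(z)|$ respectively. Both are integrable in $z$ for a.e.\ $x$, since $f_k*|\ell\xi|$ and $f_{k+1}*|\xi|$ are in $L^q$ by Young's inequality. Linearity of the integral then gives the identity a.e.

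Properness and local boundedness of $\ell$ ensure that $D_\ell$ is a genuine closed, densely defined multiplication operator. The $(SG_\beta)$ hypothesis enters only through Lemma~\ref{lem2}, to place $f_k$ in $L^1(G)$. Finally, an induction over $k$ yields $\lambda_q(f)\in\bigcap_k\operatorname{Dom}(\delta_\ell^k)$, with $\|\delta_\ell^k(\lambda_q(f))\|\le \|\ell^k f\|_{L^1(G)}<\infty$.
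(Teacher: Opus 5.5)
Your proposal is correct and follows essentially the same route as the paper. The paper also identifies $\delta_\ell^k(\lambda_q(f))$ with the operator $A_k$ whose kernel is $f(y)\bigl(\ell(x)-\ell(y^{-1}x)\bigr)^k$, which is your kernel after the substitution $z=y^{-1}x$. It bounds $A_k$ by $(|f|\ell^k)*|\xi|$ using Young's inequality and Lemma~\ref{lem2}, and then proves $A_k(\operatorname{Dom}(D_\ell))\subseteq\operatorname{Dom}(D_\ell)$ and $[D_\ell,A_k]=A_{k+1}$ by induction. The one minor difference is that you verify absolute convergence of the integral directly for every $\xi\in L^q(G)$, whereas the paper first defines $A_k$ on $C_c(G)$. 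Your version handles that step slightly more cleanly.
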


\begin{proof}
For $f\in S_q^\infty(G)$. Let $T_f=\lambda_q(f)$ be  the left convolution operator acting on $L^q(G)$ and 
$\mathcal{D}(D_\ell)=\{\xi\in L^q(G):\ell\xi\in L^q(G)\}$
be the domain of $D_\ell$. We first establish an explicit integral formula for the action of the derivation. Let $\xi\in C_c(G)$. The first commutator is given by
\begin{align*}
(\delta_\ell(T_f)\xi)(x)
&=(D_\ell T_f\xi)(x)-(T_fD_\ell\xi)(x)\\
&=\ell(x\int_G f(y)\xi(y^{-1}x)d\mu(y)-\int_Gf(y)\ell(y^{-1}x)\xi(y^{-1}x)d\mu(y) \\
&=\int_Gf(y)\left(\ell(x)-\ell(y^{-1}x)\right)\xi(y^{-1}x)d\mu(y).
\end{align*}
For $k\in\mathbb N$, define initially on $C_c(G)$ the operator $A_k$ by
\begin{equation}\label{chen41}
(A_k\xi)(x)=\int_G f(y)\left(\ell(x)-\ell(y^{-1}x)\right)^k\xi(y^{-1}x)dy.
\end{equation}
Thus $A_0=T_f$. We claim that $A_k$ extends to a bounded operator on
$L^q(G)$ and that
$$\delta_\ell(A_k)=A_{k+1}.$$
Since $\ell(x)=\ell(y\cdot y^{-1}x)\le\ell(y)+\ell(y^{-1}x)$, we have 
$\ell(x)-\ell(y^{-1}x)\le \ell(y)$. Similarly, 
$\ell(y^{-1}x)\le\ell(y^{-1})+\ell(x)=\ell(y)+\ell(x)$ 
implies $\ell(y^{-1}x)-\ell(x)\le\ell(y)$. Thus,
$$\left|\ell(x)-\ell(y^{-1}x)\right|\le\ell(y).$$
By Eq.\eqref{chen41},  for $\xi\in C_c(G)$, we have
\begin{align*}
|(A_k\xi)(x)| 
&\le \int_G |f(y)| \left|\ell(x)-\ell(y^{-1}x)\right|^k|\xi(y^{-1}x)|d\mu(y)\\
&\le\int_G\left(|f(y)|\ell(y)^k \right)|\xi(y^{-1}x)|d\mu(y)\\	&=\left((|f|\ell^k)*|\xi|\right)(x).
\end{align*}
Therefore, by Young's inequality,
$$\|A_k\xi\|_{L^q(G)}\le\|f\ell^k\|_{L^1(G)} \ \|\xi\|_{L^q(G)}.$$
Since $f\in S_q^\infty(G)$, Lemma~\ref{lem2} implies
$f\in L^1(G,\omega_s)$ for every $s>0$, where $\omega_s(y)=e^{s\ell(y)^\beta}$.
In particular, choosing $s=1$,
since $0<\beta<1$, the subexponential weight
$\omega_1(y)=e^{\ell(y)^\beta}$ dominates any fixed polynomial power $\ell(y)^k$. Specifically, for fixed $k\in \mathbb N$, the function 
$r\rightarrow\frac{r^k}{e^{r^\beta }}(r\geq 0)$ is bounded, because 
$\lim_{r\to\infty}\frac{r^k}{ e^{r^\beta}}=0$ and it is continuous on $[0, +\infty)$. Hence, there exists $C_k=\sup_{r\geq 0}\frac{r^k}{e^{r^\beta}}<\infty$  such that for all $y\in G$
$$\ell(y)^k\le C_k e^{\ell(y)^\beta}=C_k \omega_1(y).$$
Hence
\begin{align*}
\|f\ell^k\|_{L^1(G)}=\int_G|f(y)|\ell(y)^kd\mu(y)
&\le C_k\int_G|f(y)|\omega_1(y)d\mu(y)\\
&=C_k\|f\|_{ L^1(G, \omega_1)}<\infty.
\end{align*}
Thus $A_k$ extends uniquely to a bounded operator on $L^q(G)$ and
\begin{equation}\label{important}
\|A_k\|_{B(L^q(G))}\le\|f\ell^k\|_{L^1(G)}.
\end{equation}
Next let $\xi\in\operatorname{Dom}(D_\ell)$. Using
$\ell(x)\le\ell(y)+\ell(y^{-1}x)$
and $|\ell(x)-\ell(y^{-1}x)|\le\ell(y),$
we obtain
\begin{align*}
\ell(x)|A_k\xi(x)|	
&\le \int_G |f(y)|\ell(x)|\ell(x)-\ell(y^{-1}x)|^k\xi(y^{-1}x)|d\mu(y)\\
&\le\int_G |f(y)|\ell(y)^{k+1}|\xi(y^{-1}x)|d\mu(y)\\
	&+\int_G |f(y)|\ell(y)^k\ell(y^{-1}x)|\xi(y^{-1}x)|d\mu(y)\\
&= \bigl((|f|\ell^{k+1})*|\xi|\bigr)(x)+\bigl((|f|\ell^k)*(\ell|\xi|)\bigr)(x).
\end{align*}
Applying Young's inequality gives
$$\|\ell A_k\xi\|_{L^q(G)}\le \|f\ell^{k+1}\|_{L^1(G)}\|\xi\|_{L^q(G)}+\|f\ell^k\|_{L^1(G)}\|\ell\xi\|_{L^q(G)}<\infty,$$
Therefore
$A_k(\operatorname{Dom}(D_\ell))\subseteq\operatorname{Dom}(D_\ell)$.
Finally, for $\xi\in\operatorname{Dom}(D_\ell)$, a direct computation gives
\begin{align*}
(D_\ell A_k\xi)(x)-(A_kD_\ell\xi)(x)
&=\int_G f(y)\left[\ell(x)-\ell(y^{-1}x)\right]^{k+1}
\xi(y^{-1}x)d\mu(y)\\
&=(A_{k+1}\xi)(x).	
\end{align*}
Since $A_{k+1}$ is bounded on $L^q(G)$, by the definition of
$\delta_\ell$, we get 
$A_k\in\operatorname{Dom}(\delta_\ell)$
and $\delta_\ell(A_k)=A_{k+1}$.
Starting from $A_0=T_f$, by induction we have 
$$T_f\in\operatorname{Dom}(\delta_\ell^k)\qquad\text{and}\qquad\delta_\ell^k(T_f)=A_k$$
for every $k\in\mathbb N_0$. This proves the assertion.
\end{proof}

Having established in Lemma \ref{chensmooth} that the convolution operators are infinitely differentiable with respect to
$\delta_\ell$, we now turn to the quantitative control of these iterated derivations.

\begin{proposition}\label{prochen1}
Assume that  $G$ is unimodular and satisfies condition $(SG_\beta)$ with a locally bounded length function $\ell$. Let $\delta_\ell$ be the derivation defined in Definition \ref{def45} and let  $1<q<\infty$, $p=\frac{q}{q-1}$, and $s>0$.
Then, for every $t>s$,
there exists a constant $M_{s,t}>0$ such that 
$$\sup_{k\ge0}\frac{R_s^k}{(k!)^{1/\beta}}\bigl\|\delta_\ell^k(\lambda_q(f))\bigr\|_{B(L^q(G))}\le M_{s,t}\|f\|_{q,t}$$
for all $f\in S_q^\infty(G)$, where $R_s=(\beta s)^{1/\beta}>0$. 
\end{proposition}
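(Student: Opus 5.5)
The plan is to combine the explicit formula for the iterated commutators obtained in Lemma~\ref{chensmooth} with an elementary pointwise inequality comparing $\ell^k$ to the subexponential weight, and then close with Lemma~\ref{lem2}.

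By the proof of Lemma~\ref{chensmooth}, $\delta_\ell^k(\lambda_q(f))=A_k$. The bound \eqref{important} gives
$$\|\delta_\ell^k(\lambda_q(f))\|_{B(L^q(G))}\le \|f\ell^k\|_{L^1(G)}=\int_G|f(y)|\ell(y)^k\,d\mu(y).$$
The whole problem is therefore to bound $\ell(y)^k$ by $(k!)^{1/\beta}R_s^{-k}\,\omega_s(y)$, uniformly in $k$.

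For the pointwise bound, fix $r\ge0$ and put $u=s r^\beta$. Expanding the exponential and keeping a single term,
$$e^{u}\ge \frac{u^m}{m!}\qquad\text{for every } m\ge 0.$$
We want to apply this with $m=k/\beta$, which is generally not an integer. There are two ways to handle this.
\begin{itemize}
\item Use the Gamma-function inequality $u^{a}\le \Gamma(a+1)e^{u}$, valid for all real $a\ge0$ (it is the maximum of $u^a e^{-u}$, which is attained at $u=a$, compared with Stirling).
\item Alternatively, write $(r^\beta)^{k/\beta}$ and apply Hölder/Young to interpolate between integer $m$.
\end{itemize}
Either route yields
$$r^k=\Bigl(\tfrac{u}{s}\Bigr)^{k/\beta}\le s^{-k/\beta}\,\Gamma(k/\beta+1)\,e^{s r^\beta}.$$
Next we need $\Gamma(k/\beta+1)\le C^k (k!)^{1/\beta}$ with the constant exactly $C=\beta^{-k/\beta}$, so that $s^{-k/\beta}\Gamma(k/\beta+1)\le (\beta s)^{-k/\beta}(k!)^{1/\beta}=R_s^{-k}(k!)^{1/\beta}$. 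I would check this via the sharper elementary route: the maximum of $r^k e^{-sr^\beta}$ over $r$ is attained at $r^\beta=k/(\beta s)$, with value $(k/(\beta s e))^{k/\beta}$. Since $k^k e^{-k}\le k!$, this gives
$$\sup_r r^k e^{-sr^\beta}\le (\beta s)^{-k/\beta}(k!)^{1/\beta}=R_s^{-k}(k!)^{1/\beta}.$$
This direct maximization avoids Gamma functions entirely, and I would use it. The precise matching of the constant $R_s=(\beta s)^{1/\beta}$ is the only delicate point, and the calculus maximization handles it cleanly.

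Consequently $\ell(y)^k\le R_s^{-k}(k!)^{1/\beta}\omega_s(y)$ for all $y$ and $k$, so
$$\frac{R_s^k}{(k!)^{1/\beta}}\|\delta_\ell^k(\lambda_q(f))\|\le \|f\|_{1,s}.$$
The case $k=0$ is trivial, since $\|\lambda_q(f)\|\le\|f\|_1\le\|f\|_{1,s}$. Finally, Lemma~\ref{lem2} gives $\|f\|_{1,s}\le M_{s,t}\|f\|_{q,t}$ for $t>s$. It is stated for $C_c(G)$, but its Hölder proof applies verbatim to all $f\in S_q^\infty(G)$. Taking the supremum over $k$ gives the claim.

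Lemma~\ref{lem2} and Lemma~\ref{chensmooth} require properness of $\ell$ and $(SG_\beta)$, which I take as assumed, as in those lemmas.
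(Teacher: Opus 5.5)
Your proposal is correct and follows essentially the same argument as the paper. You bound $\|\delta_\ell^k(\lambda_q(f))\|$ by $\|f\ell^k\|_{L^1(G)}$ via \eqref{important}, maximize $r^k e^{-sr^\beta}$ at $r^\beta=k/(\beta s)$ and use $(k/e)^k\le k!$ to get the constant $R_s=(\beta s)^{1/\beta}$, and then close with Lemma~\ref{lem2}; the Gamma-function detour you mention is unnecessary, and you rightly discard it.
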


\begin{proof}
For $s>0$ and $f\in S_q^\infty(G)$. By Lemma~\ref{lem2},
there exists $M_{s,t}<\infty$ such that
\begin{equation}\label{eq:le1-used}
		\|f\|_{1,s}\le M_{s,t}\,\|f\|_{q,t}.
\end{equation}
By \eqref{important} in the proof of Lemma \ref{chensmooth}, for every $k\ge 0$, we have
\begin{equation}\label{eq:young-comm}
\bigl\|\delta_\ell^k(T_f)\bigr\|_{B(L^q(G))}\le\|f\ell^k\|_{L^1(G)}.
\end{equation}
For $u\ge 0$ consider $h(u)=u^ke^{-s u^\beta}$. Maximizing $h$ gives
$$\sup_{u>0} u^k e^{-su^\beta}=\Big(\frac{k}{s\beta}\Big)^{k/\beta}e^{-k/\beta}.$$
Since  $(k/e)^k\leq k!$,  we have
$\Big(\frac{k}{e}\Big)^{k/\beta}\le(k!)^{1/\beta}.$
Hence for all $u\ge 0$,
\begin{equation}\label{eq:key-pointwise}
u^k \le s^{-k/\beta}(\beta^{-1/\beta})^k\,(k!)^{1/\beta}e^{s u^\beta}.
\end{equation}
With $u=\ell(x)$ this becomes
$$\ell(x)^k\le(\beta s)^{-k/\beta}(k!)^{1/\beta}\omega_s(x).$$
Therefore
\begin{equation}\label{eq:flk}
\|f\,\ell^k\|_{L^1(G)}
\le (\beta s)^{-k/\beta}(k!)^{1/\beta}\|f\|_{1,s}.
\end{equation} 
Combining\eqref{eq:le1-used} \eqref{eq:young-comm} and \eqref{eq:flk},  for all $k\ge 0$, we get
$$\bigl\|\delta_\ell^k(\lambda_q(f))\bigr\|_{B(L^q(G))}\le M_{s,t}(\beta s)^{-k/\beta}(k!)^{1/\beta}\|f\|_{q,t}.$$
Set $R_s =(\beta s)^{1/\beta}.$
Then
$$\frac{R_s^k}{(k!)^{1/\beta}}\bigl\|\delta_\ell^k(\lambda_q(f))\bigr\|_{B(L^q(G))}
\le M_{s,t}\,\|f\|_{q,t}\quad\text{for all} \ k\ge 0.$$
Taking the supremum over $k$ yields
$$\sup_{k\ge 0}\frac{R_s^k}{(k!)^{1/\beta}}
\bigl\|\delta_\ell^k(\lambda_q(f))\bigr\|_{B(L^q(G))}\le M_{s,t}\|f\|_{q,t}.$$
\end{proof}

The following proposition is the abstract Gevrey inversion principle needed in the proof of the main theorem.  It is a direct consequence of the norm-controlled inversion theory of Gr\"ochenig and Klotz \cite{Grk1,Gr2}, we included the proof here for the completeness. 

\begin{proposition}\label{prochen2}
Let $\delta$ be a closed derivation on a unital Banach algebra $\mathcal B$.  For $\sigma>1$ and $R>0$, set
$$G_R^{(\sigma)}(\mathcal B)=\left\{T\in\bigcap_{n\ge0}\mathrm{Dom}(\delta^n):
\|T\|_{\sigma,R}:=\sup_{n\ge0}\frac{R^n\|\delta^n(T)\|_{\mathcal B}}{(n!)^\sigma}<\infty\right\}.$$
Let $0<R^\prime<R$.  If $T\in G_R^{(\sigma)}(\mathcal B)$ is invertible in $\mathcal B$, then $T^{-1}\in G_{R^\prime}^{(\sigma)}(\mathcal B)$ and
$$\|T^{-1}\|_{\sigma,R^\prime}\le\|T^{-1}\|_{\mathcal B}
 v_{\sigma-1}\!\left(\|T^{-1}\|_{\mathcal B}\frac{\|T\|_{\sigma,R}}{1-R^\prime/R}\right),$$
where
$v_{\sigma-1}(x)=\sum_{m=0}^\infty\frac{x^m}{(m!)^{\sigma-1}}$.
Consequently the operator-algebraic Gevrey-Beurling class 
$$G^{(\sigma)}(\mathcal B)=\bigcap_{R>0}G_R^{(\sigma)}(\mathcal B)$$ 
is inverse-closed in $\mathcal B$.
\end{proposition}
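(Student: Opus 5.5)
The plan is to prove that $T^{-1}$ lies in $\bigcap_n\mathrm{Dom}(\delta^n)$, derive an explicit formula for $\delta^n(T^{-1})$ as a sum over compositions of $n$, and then sum the resulting Gevrey bounds.

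\emph{Step 1: differentiability of the inverse.} For $T\in\mathrm{Dom}(\delta)$ invertible in $\mathcal B$, we show $T^{-1}\in\mathrm{Dom}(\delta)$ with
\[
\delta(T^{-1})=-T^{-1}\delta(T)T^{-1}.
\]
For the concrete derivation $\delta_\ell$ this is a direct check. If $\xi\in\mathrm{Dom}(D_\ell)$, write $\eta=T^{-1}\xi$. Then $D_\ell\eta$ is formally $T^{-1}\bigl(D_\ell\xi-\delta(T)\eta\bigr)$, and closedness of $D_\ell$ makes this rigorous. For an abstract closed derivation one uses the standard fact that the domain of a closed derivation on a unital Banach algebra is inverse-closed; we take this as part of the framework. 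Iterating by induction on $n$ gives $T^{-1}\in\mathrm{Dom}(\delta^n)$ and the expansion
\[
\delta^n(T^{-1})=\sum_{m=1}^{n}(-1)^m\!\!\sum_{\substack{k_1+\dots+k_m=n\\ k_i\ge1}}\binom{n}{k_1,\dots,k_m}\,T^{-1}\delta^{k_1}(T)T^{-1}\cdots\delta^{k_m}(T)T^{-1}.
\]
This is verified by applying Leibniz to each term and using the first-order formula. Combinatorially, one is differentiating $\sum_m (-1)^m T^{-1}(\delta T\, T^{-1})^m$, a Faà di Bruno type expansion.

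\emph{Step 2: estimate each term.} Insert $\|\delta^{k}(T)\|\le \|T\|_{\sigma,R}(k!)^\sigma R^{-k}$ into every factor. Each term is then bounded by
\[
\|T^{-1}\|^{m+1}\|T\|_{\sigma,R}^m R^{-n}\, n!\prod_i (k_i!)^{\sigma-1}.
\]
The key combinatorial inequality is that for $k_i\ge1$ summing to $n$,
\[
\prod_i k_i!\le \frac{n!}{\binom{m}{1}\cdots}\quad\text{or more simply}\quad \prod_i k_i!\;\le\;\frac{(n-m+1)!\,\cdot\,1}{1}\le \frac{n!}{m!}\cdot C?
\]
I will aim for the clean bound $\prod k_i!\,\cdot m!\le n!$ up to a binomial factor. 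The cleaner route is $\prod_i k_i! \le n!/\binom{n}{k_1,\dots,k_m}$, which gives $n!\prod(k_i!)^{\sigma-1}\le (n!)^\sigma \binom{n}{k_1,\ldots,k_m}^{1-\sigma}$. Since $\binom{n}{k_1,\ldots,k_m}\ge m!$ whenever all $k_i\ge1$, the factor $(m!)^{1-\sigma}$ appears. The number of compositions of $n$ into $m$ positive parts is $\binom{n-1}{m-1}\le 2^{n}$, so this count must be absorbed. Multiplying by $R'^n/(n!)^\sigma$ gives
\[
\frac{R'^n\|\delta^n(T^{-1})\|}{(n!)^\sigma}\le \|T^{-1}\|\sum_{m}\binom{n-1}{m-1}\Bigl(\tfrac{R'}{R}\Bigr)^{n}\frac{(\|T^{-1}\|\|T\|_{\sigma,R})^m}{(m!)^{\sigma-1}}.
\]
The identity $\sum_{n\ge m}\binom{n-1}{m-1}x^{n}=\bigl(x/(1-x)\bigr)^m$ with $x=R'/R$ yields the factor $(1-R'/R)^{-m}$ after bounding $x^m\le1$. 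This produces exactly $\|T^{-1}\|\,v_{\sigma-1}\bigl(\|T^{-1}\|\|T\|_{\sigma,R}/(1-R'/R)\bigr)$, where the $m=0$ term accounts for $n=0$. Since the sum over $n$ dominates the supremum, the stated estimate follows. Inverse-closedness of $G^{(\sigma)}(\mathcal B)$ is then immediate: given $R'$, apply the estimate with any $R>R'$.

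\emph{Main obstacle.} The delicate points are the combinatorial bookkeeping in Step 2 and the rigorous justification of the iterated formula in Step 1. For the bookkeeping, the multinomial must be converted into $(m!)^{1-\sigma}$ while the count of compositions is controlled by the geometric series in $R'/R$. For Step 1, the issue is domain preservation for unbounded $D_\ell$, which I would handle through closedness of $\delta$, as in Lemma~\ref{closed-derivation}.
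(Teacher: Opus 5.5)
Your Step 2 is correct, and it is organized differently from the paper. The paper never writes out $\delta^n(T^{-1})$. It bounds the Dales--Davie norm by $\|T\|_{\mathcal D^1_{M_{R'}}}\le \|T\|_{\sigma,R}/(1-R'/R)$. It then quotes Gr\"ochenig--Klotz's inversion inequality $\|T^{-1}\|_{\mathcal D^1_{M_{R'}}}\le\sum_m A_m^m\|T^{-1}\|^{m+1}\|T\|_{\mathcal D^1_{M_{R'}}}^m$ with $A_m^m=(m!)^{1-\sigma}$, and finally bounds the supremum by the $\ell^1$-sum. You carry out by hand the argument behind that citation. Your bound $\binom{n}{k_1,\dots,k_m}^{1-\sigma}\le (m!)^{1-\sigma}$ is exactly their constant $A_m^m$. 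Instead of passing through the $\ell^1$-norm of $T$, you count compositions with $\sum_{n\ge m}\binom{n-1}{m-1}x^n=(x/(1-x))^m$. This gives the stated estimate, even with $x/(1-x)$ in place of $1/(1-x)$, and makes the proof self-contained; the paper's route is shorter but rests entirely on the citation. Do delete the exploratory false start in Step 2, and justify $\binom{n}{k_1,\dots,k_m}\ge m!$. For example, start from $(1,\dots,1)$ and note that raising one $k_i$ by $1$ multiplies the multinomial by $(n+1)/(k_i+1)\ge 1$.

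The genuine problem is the ``standard fact'' in Step 1: the domain of a closed derivation on a unital Banach algebra need not be inverse-closed. Take $\mathcal B=C(\mathbb T)$, $\mathrm{Dom}(\delta)=\{f\in C^1(\mathbb T):\hat f(k)=0 \text{ for } k<0\}$ and $\delta f=f'$. This is a closed derivation, and $T(\theta)=e^{i\theta}$ has $\|\delta^n T\|_\infty=1$, so $T\in G^{(\sigma)}(\mathcal B)$. Yet $T^{-1}=e^{-i\theta}\notin\mathrm{Dom}(\delta)$. So the proposition as stated needs the extra hypothesis that $\mathrm{Dom}(\delta)$ is inverse-closed in $\mathcal B$. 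This holds, e.g., when $\mathrm{Dom}(\delta)$ is dense and contains $1$, by a Neumann series plus closedness. The paper's proof does not confront this point either; it is absorbed into the citation. You should state it as a hypothesis and verify it for the derivations actually used.

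For $\delta_\ell$ the hypothesis does hold, but ``closedness of $D_\ell$ makes this rigorous'' needs an approximation, since $T^{-1}\xi$ is not yet known to lie in $\mathrm{Dom}(D_\ell)$. Let $J_\varepsilon$ be multiplication by $(1+\varepsilon\ell)^{-1}$. One checks $TJ_\varepsilon=J_\varepsilon T+\varepsilon J_\varepsilon\delta_\ell(T)J_\varepsilon$. Hence for $\eta_\varepsilon=J_\varepsilon T^{-1}\xi\in\mathrm{Dom}(D_\ell)$ one gets $D_\ell T\eta_\varepsilon\to D_\ell\xi$, because $\varepsilon D_\ell J_\varepsilon$ is contractive and tends to $0$ strongly. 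Therefore $D_\ell\eta_\varepsilon=T^{-1}\bigl(D_\ell T\eta_\varepsilon-\delta_\ell(T)\eta_\varepsilon\bigr)$ converges, and closedness of $D_\ell$ gives $T^{-1}\xi\in\mathrm{Dom}(D_\ell)$ with $\delta_\ell(T^{-1})=-T^{-1}\delta_\ell(T)T^{-1}$. The density shortcut is not available here, since $\mathrm{Dom}(\delta_\ell)$ is in general not norm dense in $B(L^q(G))$.
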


\begin{proof}
Let $T\in G_{R}^{(\sigma)}(\mathcal B)$. By definition, for all $n\ge0$ we have 
$$\frac{R^n\|\delta^n(T)\|_{\mathcal B}}{(n!)^\sigma} \le \|T\|_{\sigma,R}.$$
Recall that Dales-Davie algebra is defined by 
$$\mathcal{D}^1_{M_{R^\prime}}=\left\{T\in{\mathcal B}^\infty:  	\|T\|_{\mathcal{D}^1_{M_{R^\prime}}}=\sum_{n=0}^\infty\frac{{R^\prime} ^n}{(n!)^\sigma} \|\delta^n(T)\|_{\mathcal B}<\infty\right\}.$$
For $R^\prime<R$, since 
\begin{align*}
\|T\|_{\mathcal{D}^1_{M_{R^\prime}}}
&=\sum_{n=0}^\infty \left(\frac{R^\prime}{R}\right)^n \frac{R^n\|\delta^n(T)\|_{\mathcal B}}{(n!)^\sigma}\\
&\le\sum_{n=0}^\infty\left(\frac{R^\prime}{R}\right)^n\|T\|_{\sigma, R}
=\frac{1}{1-R^\prime/R}\|T\|_{\sigma, R}<\infty,
\end{align*}
we have $T\in\mathcal{D}^1_{M_{R^\prime}}$.  Theorem 2.18 in \cite{Grk5} established that, for a sequence $M_k$, if its combinatorial coefficients satisfy 
$$A_m=\big(\{\sup\{\frac{k!}{M_k}\prod_{j=1}^{m}\frac{M_{l_j}}{l_j!}: l_j\geq 1 \ \text{for} \ 1\leq j\leq m, \ \sum_{j=1}^{m}l_j=k\}\big)^{\frac{1}{m}} \to 0,$$ 
then Dales-Davie algebra admits norm-controlled inversion. In our setting, the sequence is $M_k=\frac{(k!)^\sigma}{(R^\prime)^k}$. When calculating the coefficient $A_m$, the  $R^\prime$ perfectly cancels and the supremum reduces 
\begin{equation}\label{eqchen65}
A_m=\left(\sup_{l_j \ge 1}\left(\frac{l_1!\cdots l_m!}{k!}\right)^{\sigma-1}\right)^{1/m}= (m!)^{\frac{1-\sigma}{m}},
\end{equation}
(see \cite[Eq.(2.25), Page 928 ]{Grk5}.
Since $\sigma>1$, we have $A_m \to 0$. Therefore,   according to the  \cite[Eq (2.24), page 927]{Grk5} , we have
$$\|T^{-1}\|_{\mathcal{D}^1_{M_{R^\prime}}}\le\sum_{m=0}^\infty A_m^m\|T^{-1}\|_{\mathcal B }^{m+1}\|T\|_{\mathcal{D}^1_{M_{R^\prime}}}^m.$$
By Eq. \eqref{eqchen65}, we obtain
\begin{align}\label{eqchen66}
\|T^{-1}\|_{\mathcal{D}^1_{M_{R^\prime}}} 
&\le\|T^{-1}\|_{\mathcal B}\sum_{m=0}^\infty\frac{1}{(m!)^{\sigma-1}}\left( \|T^{-1}\|_{\mathcal B}\|T\|_{\mathcal{D}^1_{M_{R^\prime}}}\right)^m \\ \nonumber  
&=\|T^{-1}\|_{\mathcal B}\cdot v_{\sigma-1} \left(\|T^{-1}\|_{\mathcal B} \|T\|_{\mathcal{D}^1_{M_{R^\prime}}}\right)
\end{align}
Substituting the bound 
$\|T\|_{\mathcal{D}^1_{M_{R^\prime}}}\le\frac{\|T\|_{\sigma,R}}{1-R^\prime/R}$,  
we get
$$\|T^{-1}\|_{\mathcal{D}^1_{M_{R^\prime}}}\le\|T^{-1}\|_{\mathcal B}\cdot v_{\sigma-1} \left(\|T^{-1}\|_{\mathcal B}\frac{\|T\|_{\sigma, R}}{1-R^\prime/R}\right),$$
where 
$v_{\sigma-1}(x)=\sum_{m=0}^\infty\frac{x^m}{(m!)^{\sigma-1}}$.
Finally,  since the  supremum of a sequence is bounded by its $\ell^1$-sum,  we have	
\begin{align*}	
\|T^{-1}\|_{\sigma, R^\prime} 
&= \sup_{n \ge 0}\frac{(R^\prime)^n\|\delta^n(T^{-1})\|_{\mathcal B}}{(n!)^\sigma} \\
&\le\sum_{n=0}^\infty\frac{(R^\prime)^n\|\delta^n(T^{-1})\|_{\mathcal B}}{(n!)^\sigma}= \|T^{-1}\|_{\mathcal{D}^1_{M_{R^\prime}}}
\end{align*}
By \eqref{eqchen66}, we  obtain
$$\|T^{-1}\|_{\sigma, R^\prime}\le\|T^{-1}\|_{\mathcal B}\cdot v_{\sigma-1}\left( \|T^{-1}\|_{\mathcal B}\frac{\|T\|_{\sigma, R}}{1-R^\prime/R}\right).$$
It remains to prove that the Gevrey--Beurling class is inverse-closed. Let
$T\in G^{(\sigma)}(\mathcal B)=\bigcap_{R>0}G_R^{(\sigma)}(\mathcal B)$
be invertible in $\mathcal B$. Fix $R^\prime>0$. Since
$T\in G_{2R^\prime}^{(\sigma)}(\mathcal B)$, the preceding estimate with
$R=2R^\prime$ gives
$T^{-1}\in G_{R^\prime}^{(\sigma)}(\mathcal B)$.
As $R^\prime>0$ was arbitrary, we have
$$T^{-1}\in\bigcap_{R^\prime>0}G_{R^\prime}^{(\sigma)}(\mathcal B)=G^{(\sigma)}(\mathcal B).$$
Thus $G^{(\sigma)}(\mathcal B)$ is inverse-closed in
$\mathcal B$. This completes the proof.
\end{proof}

We now apply the abstract inverse-closedness criterion of
Proposition~\ref{prochen2} to the derivation $\delta_\ell$ on
$B(L^q(G))$, restricted to the intersection of the unitized algebra
$\widetilde{PF_q(G)}$ with the operator-algebraic Gevrey-Beurling class. This yields the
following inverse-closeness theorem, which may be viewed as a Gevrey-type noncommutative
Wiener lemma. It is worth emphasizing that this theorem does not require 
$G$ to satisfy condition $(SG_\beta)$.

\begin{theorem}\label{inverse-closed}
Assume that  $G$ is a locally compact group with a proper locally bounded length function $\ell$. Let $\delta_\ell$ be the derivation defined in Definition \ref{def45}. For 
$1<q<\infty$ and $0<\beta<1$, set 
$$\widetilde{PF_q(G)}=PF_q(G)+\mathbb C I\subset B(L^q(G))$$
be the unitization of $PF_q(G)$  and  
$$G^{(1/\beta)}(B(L^q(G)))=\bigcap_{R>0}G_{R}^{(1/\beta)}(B(L^q(G))).$$
be the be operator-algebraic Gevrey-Beurling class. 
Then the algebra
$$\mathcal A_{q,\beta}(G)=\widetilde{PF_q(G)}\cap G^{(1/\beta)}(B(L^q(G)))$$
is inverse-closed in $\widetilde{PF_q(G)}$. 
Moreover, the Gevrey seminorms of $A^{-1}$ satisfy the estimates in Proposition~\ref{prochen2}.
\end{theorem}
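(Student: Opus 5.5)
The plan is to reduce everything to Proposition~\ref{prochen2}, applied with $\mathcal B=B(L^q(G))$, $\delta=\delta_\ell$ and $\sigma=1/\beta>1$. By Lemma~\ref{closed-derivation}, $\delta_\ell$ is a closed derivation on the unital Banach algebra $B(L^q(G))$, so that proposition applies verbatim. It shows that $G^{(1/\beta)}(B(L^q(G)))$ is inverse-closed in $B(L^q(G))$ and gives the seminorm bound
\[
\|T^{-1}\|_{1/\beta,R'}\le\|T^{-1}\|_{B(L^q(G))}\,v_{1/\beta-1}\Bigl(\|T^{-1}\|_{B(L^q(G))}\tfrac{\|T\|_{1/\beta,R}}{1-R'/R}\Bigr)
\]
for every $0<R'<R$.

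First I would check that $\mathcal A_{q,\beta}(G)$ is a unital subalgebra. The identity $I$ lies in $\widetilde{PF_q(G)}$. It also lies in $\mathrm{Dom}(\delta_\ell^n)$ with $\delta_\ell(I)=0$, so $\|I\|_{1/\beta,R}=1$ for all $R$. Closure under products follows from $\widetilde{PF_q(G)}$ being an algebra together with Remark~\ref{rek24}, which shows that $G^{(1/\beta)}$ is an algebra.

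Now let $T\in\mathcal A_{q,\beta}(G)$ be invertible in $\widetilde{PF_q(G)}$. Then $T^{-1}\in\widetilde{PF_q(G)}\subset B(L^q(G))$, so $T$ is in particular invertible in $B(L^q(G))$, with the same inverse by uniqueness of inverses. Fix $R'>0$ and apply Proposition~\ref{prochen2} with $R=2R'$, using $T\in G_{2R'}^{(1/\beta)}(B(L^q(G)))$. This gives $T^{-1}\in G_{R'}^{(1/\beta)}(B(L^q(G)))$ together with the quantitative estimate above. Since $R'$ is arbitrary, $T^{-1}\in G^{(1/\beta)}(B(L^q(G)))$. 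Combined with $T^{-1}\in\widetilde{PF_q(G)}$, this gives $T^{-1}\in\mathcal A_{q,\beta}(G)$, which is the inverse-closedness claim. The "moreover" clause is exactly the estimate of Proposition~\ref{prochen2}, valid for every pair $0<R'<R$.

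The only point needing care is that we never need to know that $\delta_\ell$ preserves $\widetilde{PF_q(G)}$, or that the inverse taken in $\widetilde{PF_q(G)}$ is compatible with the Gevrey structure. This is handled by doing all Gevrey estimates in the ambient algebra $B(L^q(G))$ and intersecting only at the end. I expect the main (mild) obstacle to be confirming that Proposition~\ref{prochen2} really yields $T^{-1}\in\bigcap_n\mathrm{Dom}(\delta_\ell^n)$, and not merely a formal bound. I would justify this through the Dales--Davie algebra $\mathcal D^1_{M_{R'}}$, a Banach algebra inside $\bigcap_n\mathrm{Dom}(\delta_\ell^n)$. The Gr\"ochenig--Klotz theorem gives that $T^{-1}$, computed in $B(L^q(G))$, already lies in $\mathcal D^1_{M_{R'}}$. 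In particular no $(SG_\beta)$ hypothesis is used.
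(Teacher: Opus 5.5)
Your proposal is correct and follows the paper's own proof: the inverse taken in $\widetilde{PF_q(G)}$ is the inverse in $B(L^q(G))$, Proposition~\ref{prochen2} with $\mathcal B=B(L^q(G))$, $\delta=\delta_\ell$, $\sigma=1/\beta$ (and $R=2R'$) puts it in $G^{(1/\beta)}(B(L^q(G)))$ with the stated estimate, and you intersect at the end, without using $(SG_\beta)$. On the point you single out, membership of $T^{-1}$ in $\bigcap_n\mathrm{Dom}(\delta^n)$ does not follow from closedness of a derivation alone: the map $f\mapsto f'$, on boundary values of functions with $f$ and $f'$ in the disc algebra, is a closed derivation on $C(\mathbb T)$ whose domain contains the invertible element $z$ but not $z^{-1}=\bar z$. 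For $\delta_\ell$ it does hold, because $T\in\mathrm{Dom}(\delta_\ell)$ exactly when $t\mapsto e^{it\ell}Te^{-it\ell}$ is norm-Lipschitz; this passes to $T^{-1}$ since these conjugations are isometric automorphisms, and then $\delta_\ell(T^{-1})=-T^{-1}\delta_\ell(T)T^{-1}$ with induction gives all the higher domains.
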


\begin{proof}
By Lemma \ref{closed-derivation}, $\delta_\ell$ is a closed derivation. Since $\sigma>1$, by Remark \ref{rek24}, $G^{(\sigma)}(B(L^q(G)))$ is an Fr\'echet algebra.  
Hence $\mathcal A_{q,\beta}(G)$ is an algebra as the intersection of two subalgebras of $B(L^q(G))$. Now let $A\in \mathcal A_{q,\beta}(G)$ and suppose that $A$ is invertible
in $\widetilde{PF_q(G)}$.  Then there exists
$B\in \widetilde{PF_q(G)}$ such that
$AB=BA=I$.	Since $\widetilde{PF_q(G)}\subset B(L^q(G))$, the same identities hold in $B(L^q(G))$.  Hence $A$ is invertible in $B(L^q(G))$, and its inverse in $B(L^q(G))$ is precisely $B$.  In particular,
$A^{-1}=B\in\widetilde{PF_q(G)}$. On the other hand, since 
$A\in G^{(\sigma)}(B(L^q(G)))$,
by Proposition \ref{prochen2}, applied with
$\mathcal B=B(L^q(G))$ and $\delta=\delta_\ell$, the Gevrey-Beurling
class $G_\ell^{(\sigma)}(B(L^q(G)))$ is inverse-closed in $B(L^q(G))$. Hence
$$A^{-1}\in G^{(\sigma)}(B(L^q(G))).$$
Moreover, the seminorms of $A^{-1}$ satisfy the estimate stated in
Proposition \ref{prochen2}.
Therefore
$$A^{-1}\in\widetilde{PF_q(G)}\cap G^{(\sigma)}(B(L^q(G)))=\mathcal A_{q,\beta}(G).$$
This shows $\mathcal A_{q,\beta}(G)$ is inverse-closed in
$\widetilde{PF_q(G)}$.
\end{proof}

\begin{lemma}\label{density}
Assume that  $G$ is unimodular and satisfies condition $(SG_\beta)$ with a locally bounded length function $\ell$. The algebra $\mathcal A_{q,\beta}(G)$ is dense in
$\widetilde{PF_q(G)}$.
\end{lemma}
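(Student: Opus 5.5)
The plan is to show that $\lambda_q(C_c(G))+\mathbb C I$, which is dense in $\widetilde{PF_q(G)}$ by the definition of $PF_q(G)$ as the norm closure of $\lambda_q(C_c(G))$, is contained in $\mathcal A_{q,\beta}(G)$. Since $\mathcal A_{q,\beta}(G)$ is a subspace, density then follows at once.

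The first step is the identity. Since $I$ commutes with $D_\ell$ on $\mathrm{Dom}(D_\ell)$, we have $I\in\mathrm{Dom}(\delta_\ell^k)$ for every $k$, with $\delta_\ell(I)=0$. Hence $\|I\|_{\sigma,R}=1$ for every $R>0$, so $I\in G^{(1/\beta)}(B(L^q(G)))$.

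The second step handles $f\in C_c(G)$. Because $\ell$ is locally bounded, $\omega_t$ is bounded on $\supp f$, so $\|f\|_{q,t}<\infty$ for every $t>0$ and $f\in S_q^\infty(G)$. Lemma~\ref{chensmooth} then gives $\lambda_q(f)\in\bigcap_k\mathrm{Dom}(\delta_\ell^k)$. For the Gevrey bounds I plan to invoke Proposition~\ref{prochen1}: for each $s>0$ it gives $\|\lambda_q(f)\|_{1/\beta,R_s}\le M_{s,t}\|f\|_{q,t}<\infty$ with $R_s=(\beta s)^{1/\beta}$. As $s$ ranges over $(0,\infty)$, $R_s$ ranges over all of $(0,\infty)$, and the seminorms increase in $R$, so $\lambda_q(f)\in G^{(1/\beta)}(B(L^q(G)))$. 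This is where $(SG_\beta)$ and unimodularity are used, through Lemma~\ref{lem2}.

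The same conclusion is also available directly for compactly supported $f$, without Lemma~\ref{lem2}. If $\supp f\subset B_K$, then the estimate \eqref{important} gives $\|\delta_\ell^k(\lambda_q(f))\|\le K^k\|f\|_{L^1}$, and $\sup_k R^kK^k/(k!)^{1/\beta}<\infty$ for every $R$. I would mention this as a remark, since it shows the density claim is essentially elementary.

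Combining the two steps, $\lambda_q(f)+cI\in\widetilde{PF_q(G)}\cap G^{(1/\beta)}(B(L^q(G)))=\mathcal A_{q,\beta}(G)$. Density in $\widetilde{PF_q(G)}$ follows: given $T+cI$ with $T\in PF_q(G)$, approximate $T$ in operator norm by $\lambda_q(f_n)$ with $f_n\in C_c(G)$.

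The only mildly delicate point is checking that Proposition~\ref{prochen1} covers all $R>0$, and not just a fixed $R$. This is handled by the reparametrization $R=R_s$ above, or by the direct compact-support bound. I expect no real obstacle.
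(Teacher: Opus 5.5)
Your proposal is correct and follows essentially the same route as the paper. Both arguments place $I$ and a dense family of convolution operators inside $\mathcal A_{q,\beta}(G)$ by applying Proposition~\ref{prochen1} with the reparametrization $s=R^\beta/\beta$, so that $R_s=R$; the differences are that working with $\lambda_q(C_c(G))$ directly makes explicit the inclusion $C_c(G)\subset S_q^\infty(G)$, which the paper uses tacitly when it asserts that $\lambda_q(S_q^\infty(G))$ is dense in $PF_q(G)$, and that your side remark via $\|\delta_\ell^k(\lambda_q(f))\|\le K^k\|f\|_{L^1(G)}$ correctly shows the density statement does not actually depend on $(SG_\beta)$.
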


\begin{proof}
By Proposition~\ref{prochen1}, for every $s>0$ there exist $t>s$ and
$M_{s,t}<\infty$ such that, for all $f\in S_q^\infty(G)$,
$$\sup_{k\ge0}\frac{R_s^k}{(k!)^{1/\beta}}
\left\|\delta_\ell^k(\lambda_q(f))\right\|_{B(L^q(G))}\le M_{s,t}\|f\|_{q,t},
\qquad R_s=(\beta s)^{1/\beta}.$$
Now let $R>0$ be arbitrary and choose
$s=\frac{R^\beta}{\beta}$.
Then $s>0$ and $R_s=R$. Hence, for this choice of $s$, Proposition~\ref{prochen1}
gives
$$\sup_{k\ge0}\frac{R^k}{(k!)^{1/\beta}}\left\|\delta_\ell^k(\lambda_q(f))\right\|_{B(L^q(G))}\le M_{s,t}\|f\|_{q,t}<\infty.$$
Since $R>0$ was arbitrary, it follows that
$$\lambda_q(f)\in G^{(1/\beta)}(B(L^q(G))).$$
Since $\lambda_q(f)\in PF_q(G)\subset\widetilde{PF_q(G)}$, it follows that
$\lambda_q(S_q^\infty(G))\subset\mathcal A_{q,\beta}(G)$.
Moreover, $\lambda_q(S_q^\infty(G))$ is dense in $PF_q(G)$. Therefore,
for every $T\in PF_q(G)$, there exists a sequence
$f_n\in S_q^\infty(G)$ such that $\lambda_q(f_n)\to T$ in $PF_q(G)$.
Since $\delta_\ell(I)=0$, we have $I\in G^{(1/\beta)}(B(L^q(G)))$.
Thus $I\in\mathcal A_{q,\beta}(G)$.
Now let
$$A=\lambda I+T\in\widetilde{PF_q(G)}$$
for $\lambda\in\mathbb C$ and $T\in PF_q(G)$. Then 
$\lambda I+\lambda_q(f_n)\in\mathcal A_{q,\beta}(G)$
and
$$\lambda I+\lambda_q(f_n)\rightarrow \lambda I+T=A$$
in $\widetilde{PF_q(G)}$. Hence $\mathcal A_{q,\beta}(G)$ is dense in
$\widetilde{PF_q(G)}$.
\end{proof}

\begin{corollary}\label{K}
Assume that $G$ is unimodular and satisfies condition $(SG_\beta)$ with a proper locally bounded length function $\ell$. Let
$$\mathcal A_{q,\beta}(G)=\widetilde{PF_q(G)}\cap G^{(1/\beta)}\bigl(B(L^q(G))\bigr),$$
equipped with the natural intersection topology. Then
$\mathcal A_{q,\beta}(G)$ is a dense Fréchet subalgebra of
$\widetilde{PF_q(G)}$ stable under holomorphic functional calculus.
Consequently, the inclusion
$\mathcal A_{q,\beta}(G)\hookrightarrow \widetilde{PF_q(G)}$
induces an isomorphism in topological $K$-theory,
$$K_*\bigl(\mathcal A_{q,\beta}(G)\bigr)\cong
K_*\bigl(\widetilde{PF_q(G)}\bigr).$$
\end{corollary}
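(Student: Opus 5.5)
The argument has three parts: the Fréchet algebra structure of $\mathcal A_{q,\beta}(G)$, its density and spectral invariance, and then stability under holomorphic functional calculus together with the $K$-theory conclusion.

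\emph{Fréchet structure.} Put on $\mathcal A_{q,\beta}(G)$ the topology given by the countable family of seminorms
$$\{\|\cdot\|_{B(L^q(G))}\}\cup\{\|\cdot\|_{1/\beta,m}\}_{m\in\mathbb N}.$$
Completeness comes from two facts:
\begin{itemize}
\item $\widetilde{PF_q(G)}$ is closed in $B(L^q(G))$, since it is a closed subspace plus a one-dimensional one.
\item $G^{(1/\beta)}(B(L^q(G)))$ is complete by Remark~\ref{rek24}. Its seminorms dominate the operator norm, because the $n=0$ term is $\|T\|$.
\end{itemize}
Hence a Cauchy sequence converges in both spaces, and the two limits coincide in $B(L^q(G))$. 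Joint continuity of multiplication follows from the estimate $\|ST\|_{\sigma,R}\le C_{R,R'}\|S\|_{\sigma,R'}\|T\|_{\sigma,R'}$ of Remark~\ref{rek24}, together with submultiplicativity of the operator norm.

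\emph{Density and spectral invariance.} Density is Lemma~\ref{density}. The inclusion is unital, because $I\in\mathcal A_{q,\beta}(G)$ as $\delta_\ell(I)=0$. By Theorem~\ref{inverse-closed}, $\mathcal A_{q,\beta}(G)$ is inverse-closed in $\widetilde{PF_q(G)}$, so $\sigma_{\mathcal A_{q,\beta}}(a)=\sigma_{\widetilde{PF_q}}(a)$ for every $a$.

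\emph{Holomorphic functional calculus and $K$-theory.} This is the main step. I would first try the general result that a dense, spectrally invariant Fréchet subalgebra with continuous inclusion is stable under holomorphic functional calculus (Schweitzer \cite{Schweitzer}). That result needs the subalgebra to be a Fréchet algebra in which inversion is continuous on its (open) group of invertibles. In our setting this continuity can be checked directly.

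Take $a\in\mathcal A_{q,\beta}(G)$ and $\lambda\notin\sigma(a)$. The quantitative estimate of Proposition~\ref{prochen2} gives
$$\|(a-\lambda)^{-1}\|_{1/\beta,R'}\le \|(a-\lambda)^{-1}\|\,v_{1/\beta-1}\!\left(\|(a-\lambda)^{-1}\|\,\frac{\|a-\lambda\|_{1/\beta,R}}{1-R'/R}\right).$$
On a compact contour $\Gamma$ around $\sigma(a)$, both $\|(a-\lambda)^{-1}\|$ and $\|a-\lambda\|_{1/\beta,R}$ are uniformly bounded, so the resolvent is uniformly bounded in every seminorm. Next use the resolvent identity
$$(a-\lambda)^{-1}-(a-\mu)^{-1}=(\lambda-\mu)(a-\lambda)^{-1}(a-\mu)^{-1},$$
with the product estimate from Remark~\ref{rek24}. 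This shows that $\lambda\mapsto(a-\lambda)^{-1}$ is continuous on $\Gamma$ into the Fréchet algebra. The Cauchy integral $\frac{1}{2\pi i}\int_\Gamma f(\lambda)(\lambda-a)^{-1}\,d\lambda$ therefore converges in $\mathcal A_{q,\beta}(G)$, and it agrees with the functional calculus computed in $\widetilde{PF_q(G)}$.

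Finally, a dense subalgebra stable under holomorphic functional calculus has the same $K$-theory, since $K_0$ and $K_1$ are preserved under such inclusions (Bost's density theorem / Schweitzer). This gives
$$K_*\bigl(\mathcal A_{q,\beta}(G)\bigr)\cong K_*\bigl(\widetilde{PF_q(G)}\bigr).$$
The only place that needs care is the uniformity of the resolvent bounds on $\Gamma$; the $v_{1/\beta-1}$ estimate handles it.
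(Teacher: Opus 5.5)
Your argument is correct and has the same skeleton as the paper's proof: the intersection Fréchet topology, density from Lemma~\ref{density}, inverse-closedness from Theorem~\ref{inverse-closed}, stability under holomorphic functional calculus, and then the density theorem for $K$-theory. The genuine difference is the holomorphic-calculus step.

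\textbf{The paper's route.} It uses no quantitative information at this point. It notes that $\widetilde{PF_q(G)}$, being a Banach algebra, is a Fréchet $Q$-algebra. It then cites Schweitzer's Lemma~1.2: for a dense, continuously included Fréchet subalgebra, spectral invariance already implies stability under holomorphic functional calculus. The reason is essentially that the subalgebra inherits the $Q$-property (its invertibles are the preimage of an open set), and inversion in a Fréchet $Q$-algebra is automatically continuous.

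\textbf{Your route.} You build the Cauchy integral by hand. Uniform resolvent bounds on $\Gamma$ come from the explicit $v_{1/\beta-1}$ estimate of Proposition~\ref{prochen2}, together with $\|a-\lambda\|_{1/\beta,R}\le\|a\|_{1/\beta,R}+|\lambda|$, which holds since $\delta_\ell(I)=0$. Continuity on $\Gamma$ comes from the resolvent identity and the product estimate of Remark~\ref{rek24}.

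\textbf{What each buys.} The abstract route is shorter and works for any spectrally invariant dense Fréchet subalgebra. Yours is self-contained and gives explicit Gevrey bounds for $f(a)$ in terms of $\sup_\Gamma|f|$ and the resolvent. It also runs verbatim at the matrix level: apply Proposition~\ref{prochen2} in $B(L^q(G)^n)$ with the derivation $[D_\ell\otimes I_n,\cdot\,]$. The matrix level is what the $K$-theory step actually requires, although, like the paper, you ultimately hand that step to the Bost/Schweitzer density theorem. Finally, you check completeness of the intersection topology explicitly, using that the $n=0$ term of each Gevrey seminorm dominates the operator norm; the paper only asserts this.
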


\begin{proof}
Put $\mathcal P=\widetilde{PF_q(G)}$ and
$\mathcal G=G^ {(1/\beta)}(B(L^q(G)))$.
We equip
$$\mathcal A_{q,\beta}(G)=\mathcal P\cap\mathcal G$$
with the natural intersection topology, equivalently the topology generated
by the seminorms
$$\rho_n(A)=\|A\|_{\mathcal P}+p_n(A),\qquad n\in\mathbb N,$$
where
$$p_n(A)=\sup_{k\ge0}\frac{n^k}{(k!)^{1/\beta}}
	\|\delta_\ell^k(A)\|_{B(L^q(G))}.$$
Since $\mathcal P$ is a unital Banach algebra and $\mathcal G$ is a
unital Fréchet algebra, both continuously embedded in $B(L^q(G))$, their
intersection $\mathcal A_{q,\beta}(G)$, endowed with the natural
intersection topology, is a Fréchet algebra. Moreover,
$I\in\mathcal A_{q,\beta}(G)$, because $I\in\mathcal P$ 
and $\delta_\ell(I)=0$. Hence $\mathcal A_{q,\beta}(G)$ is unital.
By Lemma \ref{density}, $\mathcal A_{q,\beta}(G)$ is dense in
$\widetilde{PF_q(G)}$. By Theorem~\ref{inverse-closed},
$\mathcal A_{q,\beta}(G)$ is inverse-closed in $\widetilde{PF_q(G)}$.
Since the two algebras have the same unit, inverse-closedness is
equivalent to spectral invariance in $\widetilde{PF_q(G)}$.
As $\mathcal P$ is a Banach algebra, it is an $m$-convex Fréchet
$Q$-algebra.  Hence, by \cite[Lemma~1.2]{Schweitzer}, spectral invariance
is equivalent to stability under holomorphic functional calculus in
$\mathcal P$.  Therefore $\mathcal A_{q,\beta}(G)$ is a dense
holomorphically closed Fréchet subalgebra of $\mathcal P$.  The
$K$-theoretic invariance of dense local Fréchet subalgebras then gives
$$K_*\bigl(\mathcal A_{q,\beta}(G)\bigr)
\cong K_*\bigl(\mathcal P\bigr)=K_*\bigl(\widetilde{PF_q(G)}\bigr),$$
see \cite[Proposition~8.14]{Va}.
\end{proof}

\begin{remark}
In noncommutative geometry, cyclic cocycles and unbounded geometric
cycles are usually defined on dense smooth subalgebras rather than on
the ordinary Banach or $C^*$-algebra itself.  Such subalgebras provide the
regularity needed to control commutators, higher derivations, and
kernel estimates.  When the smooth subalgebra is stable under
holomorphic functional calculus, it has the same topological $K$-theory as
the ordinary algebra; hence cyclic cocycles defined on it pair
canonically with $K$-theory classes of the ordinary algebra.
Corollary~\ref{K} identifies $\mathcal A_{q,\beta}(G)$
as a holomorphically closed Fréchet subalgebra of $\widetilde{PF_q(G)}$.  
This is useful because the smoothness here is produced by
subexponential Gevrey estimates on iterated commutators with the length
function, rather than by polynomial rapid decay.  Thus the construction
fits naturally into the framework of analytic and local cyclic homology
and suggests applications to higher index theory
\cite{PiazzaSchick2014,XieYu2017,Yu2020, Puschnigg2023}, especially for groups of intermediate or strong subexponential growth where classical rapid decay smooth algebras may be unavailable.
\end{remark}

The main result of this section is the following application of the
Theorem \ref{inverse-closed} to convolution operators with kernel in
$S_q^\infty(G)$. It yields quantitative Gevrey regularity estimates for
their inverses.

\begin{theorem}\label{main2}
Assume that $G$ is unimodular and satisfies condition $(SG_\beta)$ with
respect to a proper locally bounded length function $\ell$, where
$0<\beta<1$. Let $1<q<\infty$. For $f\in S_q^\infty(G)$, let
$T_f:=\lambda_q(f)$ be the associated convolution operator on $L^q(G)$.
If $T_f$ is invertible in the unitization $\widetilde{PF_q(G)}$, then
$$T_f^{-1}\in\mathcal A_{q,\beta}(G)
	=\widetilde{PF_q(G)}\cap G^{(1/\beta)}\bigl(B(L^q(G))\bigr),$$
where the Gevrey class is taken with respect to the derivation
$\delta_\ell$. More precisely, for every $R>0$ and every $0<R^\prime<R$, we have
$$\|T_f^{-1}\|_{1/\beta,R^\prime}\le\|T_f^{-1}\|_{B(L^q(G))}
v_{1/\beta-1}\left(\|T_f^{-1}\|_{B(L^q(G))}
	\frac{\|T_f\|_{1/\beta,R}}{1-R^\prime/R}\right),$$
where
$$v_{1/\beta-1}(x)=	\sum_{m=0}^{\infty}\frac{x^m}{(m!)^{1/\beta-1}}.$$
\end{theorem}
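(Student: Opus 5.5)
The proof is a direct assembly of three earlier results: Proposition~\ref{prochen1}, Theorem~\ref{inverse-closed} and Proposition~\ref{prochen2}.

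\emph{Step 1: $T_f$ lies in $\mathcal A_{q,\beta}(G)$.} Fix $R>0$ and set $s=R^\beta/\beta$, so that $R_s=(\beta s)^{1/\beta}=R$. Pick any $t>s$. Proposition~\ref{prochen1}, which uses $(SG_\beta)$ and unimodularity, gives
$$\|T_f\|_{1/\beta,R}\le M_{s,t}\|f\|_{q,t}<\infty,$$
since $f\in S_q^\infty(G)$. The smoothness needed to write down $\delta_\ell^k(T_f)$ is supplied by Lemma~\ref{chensmooth}. As $R$ was arbitrary, $T_f\in G^{(1/\beta)}(B(L^q(G)))$. Also $T_f=\lambda_q(f)\in PF_q(G)\subset\widetilde{PF_q(G)}$, because $S_q^\infty(G)\subseteq L^1(G)$ by Lemma~\ref{lem2} and $\lambda_q$ extends continuously from $C_c(G)$ to $L^1(G)$. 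Hence $T_f\in\mathcal A_{q,\beta}(G)$. This is exactly the argument already carried out in Lemma~\ref{density}, so I would simply cite it.

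\emph{Step 2: the inverse lies in $\mathcal A_{q,\beta}(G)$.} By hypothesis $T_f$ is invertible in $\widetilde{PF_q(G)}$. Theorem~\ref{inverse-closed} says $\mathcal A_{q,\beta}(G)$ is inverse-closed in $\widetilde{PF_q(G)}$, so $T_f^{-1}\in\mathcal A_{q,\beta}(G)$. Its proof also shows that the inverse in $\widetilde{PF_q(G)}$ coincides with the inverse in $B(L^q(G))$, since the two algebras share the identity $I$.

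\emph{Step 3: the quantitative estimate.} Apply Proposition~\ref{prochen2} with $\mathcal B=B(L^q(G))$, $\delta=\delta_\ell$ (a closed derivation by Lemma~\ref{closed-derivation}), $\sigma=1/\beta>1$ and the given pair $0<R'<R$. Step 1 gives $T_f\in G_R^{(1/\beta)}(B(L^q(G)))$, and $T_f$ is invertible in $\mathcal B$ by Step 2. Proposition~\ref{prochen2} then yields the stated bound, with $v_{\sigma-1}=v_{1/\beta-1}$.

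The only point needing care is that the norm $\|T_f^{-1}\|_{B(L^q(G))}$ appearing in the estimate is the operator norm in $B(L^q(G))$, not the norm of $\widetilde{PF_q(G)}$. This causes no difficulty, because $\widetilde{PF_q(G)}$ carries the operator norm inherited from $B(L^q(G))$ and the two inverses coincide by Step 2. I expect no step to be a genuine obstacle. If anything needs checking, it is the routine point in Step 1 that $\lambda_q(S_q^\infty(G))\subseteq PF_q(G)$, which follows from the density of $C_c(G)$ in $L^1(G)$ together with $\|\lambda_q(g)\|\le\|g\|_{L^1}$.
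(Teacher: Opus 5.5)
Your proposal is correct and follows the paper's proof: Proposition~\ref{prochen1} (with $s=R^\beta/\beta$, so $R_s=R$) puts $T_f$ in $G^{(1/\beta)}(B(L^q(G)))$, Theorem~\ref{inverse-closed} then gives $T_f^{-1}\in\mathcal A_{q,\beta}(G)$, and Proposition~\ref{prochen2} with $\mathcal B=B(L^q(G))$, $\delta=\delta_\ell$, $\sigma=1/\beta$ yields the estimate. Your justification of $\lambda_q(f)\in PF_q(G)$, via $S_q^\infty(G)\subseteq L^1(G)$ and $\|\lambda_q(g)\|\le\|g\|_{L^1(G)}$, is slightly more careful than the paper's ``by definition of $PF_q(G)$'', since $f$ need not lie in $C_c(G)$.
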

\begin{proof}
By Proposition~\ref{prochen1}, for every $R>0$ the operator
$T_f=\lambda_q(f)$ belongs to $G_R^{(1/\beta)}(B(L^q(G)))$. Hence
	$T_f\in G^{(1/\beta)}(B(L^q(G)))$.
Moreover, by definition of $PF_q(G)$, we have
$T_f\in PF_q(G)\subset \widetilde{PF_q(G)}$.
Therefore $T_f\in\mathcal A_{q,\beta}(G)$.
Assume that $T_f$ is invertible in $\widetilde{PF_q(G)}$. By
Theorem \ref{inverse-closed}, the algebra
$\mathcal A_{q,\beta}(G)$ is inverse-closed in
$\widetilde{PF_q(G)}$. Hence $T_f^{-1}\in\mathcal A_{q,\beta}(G)$.
Finally, the displayed estimate follows directly from
Proposition \ref{prochen2}, applied with
$$\mathcal B=B(L^q(G)),\qquad\delta=\delta_\ell,\qquad\sigma=1/\beta.$$
\end{proof}

\section{Examples}
In this section we give examples illustrating the range of applicability of
the Gevrey inverse-closedness theorem.  These examples show that the theorem
naturally applies to groups with strong subexponential growth, beyond the
usual polynomial rapid decay setting.

\begin{example}\label{ex1}
Every compactly generated locally compact group of polynomial growth
satisfies the strong subexponential growth condition for every
$0<\gamma<1$. In particular, this applies to connected nilpotent Lie
groups, such as the Heisenberg group, which have polynomial growth by
Guivarc'h's theorem \cite{Guivarch1973}.
	
Among compactly generated connected locally compact groups, there are no
groups of intermediate growth: such groups have either polynomial growth
or exponential growth \cite{JENK}. Hence, in the connected setting, our
theorem applies to the polynomial-growth side of this dichotomy. The
intermediate-growth examples relevant to the present paper arise instead
in the discrete, or more generally totally disconnected, setting.
\end{example}

\begin{example}\label{ex7}
The Grigorchuk group was introduced by Grigorchuk in \cite{Grig} as the
first example of a finitely generated group of intermediate growth.  We take $\ell$ to be the word length associated with a finite generating set.  Its growth function
	$$\nu(n)=\#\{x:\ell(x)\le n\}$$
satisfies the estimate
	$\nu(n)\le C\exp(n^\alpha)$
for some $C>0$ and
$\alpha\approx 0.767<1$
by Bartholdi's upper bound \cite{Barth}.  Hence, given $\beta\in(\alpha,1)$, choose $\gamma $ such that $\alpha<\gamma\le\beta$. Then the Grigorchuk group satisfies condition $(SG_\beta)$. 
Consequently, Corollary \ref{K} and
Theorem~\ref{main2} apply to the Grigorchuk group.
\end{example}

\begin{example}
Let $G$ be a finitely generated group with word length $|\cdot|$, and
suppose that $G$ has strong subexponential growth of exponent
$\alpha<1$. Choose parameters
$\alpha<\beta<\theta<1$ and let $a>0$. Define
$$f(g)=\exp(-a|g|^\theta).$$
Then, for every $s>0$ and every $1<q<\infty$, we have
$$e^{s|\cdot|^\beta}f\in\ell^q(G).$$
Indeed, since $\theta>\beta$, there exist constants $c>0$ and $R_0>0$
such that
$$s|g|^\beta-a|g|^\theta\le-c|g|^\theta$$
whenever $|g|\ge R_0$. Since $G$ has strong subexponential growth of
exponent $\alpha$ and $\theta>\alpha$, the function
$g\mapsto e^{-c|g|^\theta}$ belongs to $\ell^q(G)$. Hence
$f\in S_q^\infty(G)$ with respect to the subexponential weights of order
$\beta$. Consequently, whenever $T_f=\lambda_q(f)$ is invertible in
$\widetilde{PF_q(G)}$, Theorem~\ref{main2} gives
$T_f^{-1}\in\mathcal A_{q,\beta}(G)$.
In particular, this applies to the explicit invertible family
$$
T=I+\lambda_q(\varepsilon f),
$$
where $\varepsilon>0$ is chosen sufficiently small. Indeed, since
$f\in\ell^1(G)$, Young's inequality gives
$$\|\lambda_q(\varepsilon f)\|_{B(\ell^q(G))}
\le\varepsilon\|f\|_{\ell^1(G)}.$$
Choosing $\varepsilon\|f\|_{\ell^1(G)}<1$, then
$T$ is invertible in $\widetilde{PF_q(G)}$. Therefore,
$T^{-1}\in \mathcal A_{q,\beta}(G)$.
Thus this provides an explicit family of invertible convolution
operators whose inverses have Gevrey regularity of order $1/\beta$ with
respect to the commutator derivation.
\end{example}

\begin{example}\label{ex2}
The strong subexponential growth condition is stable under direct
products. Let $G_1$ and $G_2$ be locally compact groups with locally
bounded length functions $\ell_1$ and $\ell_2$. Assume that, for
$i=1,2$, the group $G_i$ has strong subexponential growth with exponent
$\gamma_i$, where $0<\gamma_i<1$, that is, for every $c>0$ there exists
$C_{i,c}>0$ such that
$$\mu_i(B_i(R))\le C_{i,c}e^{cR^{\gamma_i}}$$
for all $R\ge 0$. Equip $G_1\times G_2$ with the product length
$$\ell(g_1,G_2)=\ell_1(g_1)+\ell_2(G_2).$$
Then $G_1\times G_2$ has strong subexponential growth with exponent
$$\gamma=\max\{\gamma_1,\gamma_2\}.$$ 
Thus Theorems~\ref{main2}  apply to $G_1\times G_2$ whenever 
$\gamma\le\beta < 1$. 
In particular, let $\mathfrak{G}$ be  Grigorchuk group and let $H$ be  any compactly generated locally compact group  of polynomial growth, such as $\mathbb{Z}^d$ or a connected nilpotent Lie group. Since $H$ has strong subexponential growth with every exponent in $(0, 1)$, and $\mathfrak{G}$ has intermediate growth bounded above by $e^{c R^\alpha}$ for some $\alpha \approx 0.767<1$ (see Example \ref{ex7}), so the product $\mathfrak{G} \times H$ has strong subexponential growth with every exponent $\alpha_1\in (\alpha, 1)$. Consequently, Theorem~\ref{main2} applies to $\mathfrak G\times H$ for every $\beta\in(\alpha_1, 1)$.
\end{example}

\begin{example}
Let $K$ be a compact normal subgroup of a locally compact group $G$, and
let $q:G\to G/K$ be the quotient map. Let $\ell_{G/K}$ be a locally bounded length
function on $G/K$, and equip $G$ with the lifted length
$\ell_G=\ell_{G/K}\circ q$.
Then the strong subexponential growth condition is equivalent for
$G$ and $G/K$ with respect to these length functions.
Indeed, the balls of radius $R$ satisfy $$B_G(R)=q^{-1}(B_{G/K}(R)).$$
With compatible choices of Haar measures, Weil's integration formula (see \cite[Corollary B.1.7]{Be}) gives
$$\mu_G(q^{-1}(A))=\mu_K(K)\mu_{G/K}(A)$$
for every Borel set $A\subseteq G/K$. Hence
$$\mu_G(B_G(R))=\mu_K(K)\mu_{G/K}(B_{G/K}(R)).$$
Since $K$ is compact, we have $0<\mu_K(K)<\infty$.
Therefore, for every $0<\beta<1$, the group $G$ satisfies the condition $(SG_\beta)$ with respect to
$\ell_G$ if and only if $G/K$ satisfies the same condition with respect
to $\ell_{G/K}$.
Consequently, if $G$ fits into a short exact sequence
$$1\longrightarrow K\longrightarrow G
\overset{\pi}{\longrightarrow}G_0\longrightarrow 1,$$
where $K$ is compact and $G_0$ is one of the groups from the preceding
examples \ref{ex2}, then $G$ satisfies precisely the same strong subexponential growth conditions as $G_0$ with respect to the lifted length
$\ell_G(g)=\ell_{G_0}(\pi(g))$.
Hence Theorems \ref{main2} apply directly to $G$ whenever
they apply to $G_0$.
\end{example}

\section{Relative Gevrey regularity for pairs of groups}

In this section, motivated by Chatterji and Zarka's relative rapid decay
framework~\cite{ChatterjiZarka2024}, and by the classical rapid decay
theory and its applications
\cite{Haagerup,Jo1,ConnesMoscovici1990,Laff,yu,chat2}, we develop a
relative Gevrey regularity theory for pairs $(G,H)$, where $G$ is a
finitely generated discrete group and $H$ is finitely generated subgroup of $G$.
More precisely, we replace polynomial Sobolev-type estimates with
Gevrey-type factorial bounds for iterated commutators associated with
the quotient length on $G/H$.
Let $\ell$ be the
word length associated with a fixed finite symmetric generating set $S$
of $G$. The quasi-regular representation of $G$ on $\ell^q(G/H)$ is
denoted by $\pi_q^H$ and is given by
$$(\pi_q^H(g)\xi)(x)=\xi(g^{-1}x),\qquad g\in G,\ x\in G/H.$$
For $f\in\mathbb C[G]$, set
$$\pi_q^H(f)=\sum_{g\in G}f(g)\pi_q^H(g).$$
The quotient length on $G/H$ is defined by
$$\mathbb L_H(gH)=\inf_{h\in H}\ell(gh),\qquad g\in G.$$
This definition is independent of the representative of the coset. We
now introduce the relative commutator derivation associated with
$\mathbb L_H$.

\begin{defn}\label{def61}
Let $D_H$ be the multiplication operator on $\ell^q(G/H)$ defined by
$$\operatorname{Dom}(D_H)
=\{\xi\in \ell^q(G/H):\mathbb L_H\xi\in\ell^q(G/H)\},\qquad D_H\xi=\mathbb L_H\xi.$$
We define the operator  $\delta_H$ on
$B(\ell^q(G/H))$ as follows. Its domain is
\begin{align*}
\operatorname{Dom}(\delta_H)
&=\Big\{T\in B(\ell^q(G/H)):
T(\operatorname{Dom}(D_H))\subseteq \operatorname{Dom}(D_H)\text{ and }\\
&[D_H,T]\big|_{\operatorname{Dom}(D_H)}\text{extends to a bounded operator on }\ell^q(G/H)\Big\}.
\end{align*}
For $T\in\operatorname{Dom}(\delta_H)$, set
$$\delta_H(T)=\overline{[D_H,T]\big|_{\operatorname{Dom}(D_H)}},$$
where the bar denotes the unique bounded extension to $\ell^q(G/H)$.
\end{defn}

\begin{lemma}\label{lem:delta-H-closed-derivation}
The operator $\delta_H$ defined above is a closed derivation on
$B(\ell^q(G/H))$. 
\end{lemma}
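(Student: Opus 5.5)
The plan is to transcribe the proof of Lemma~\ref{closed-derivation} to the quotient setting, with $L^q(G)$ replaced by $\ell^q(G/H)$ and $D_\ell$ replaced by $D_H$. The only structural input there was that the multiplication operator is closed. So the first step is to check that $D_H$ is a closed, densely defined operator on $\ell^q(G/H)$.

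Since $\mathbb L_H$ is a real function on the countable set $G/H$, finitely supported sequences lie in $\operatorname{Dom}(D_H)$, so $D_H$ is densely defined. For closedness, suppose $\xi_n\to\xi$ and $\mathbb L_H\xi_n\to\eta$ in $\ell^q(G/H)$. Convergence in $\ell^q$ implies pointwise convergence on each coset $x\in G/H$. Hence $\mathbb L_H(x)\xi_n(x)\to\mathbb L_H(x)\xi(x)$ and also $\to\eta(x)$, so $\eta=\mathbb L_H\xi$. This gives $\xi\in\operatorname{Dom}(D_H)$ and $D_H\xi=\eta$. No growth or properness hypothesis on $\mathbb L_H$ is needed for this.

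Closedness of $\delta_H$ follows next. Let $T_n\in\operatorname{Dom}(\delta_H)$ with $T_n\to T$ and $\delta_H(T_n)\to R$ in operator norm. For $\xi\in\operatorname{Dom}(D_H)$ we have
\[
D_HT_n\xi=T_nD_H\xi+\delta_H(T_n)\xi,
\]
and the right-hand side converges to $TD_H\xi+R\xi$, while $T_n\xi\to T\xi$. Closedness of $D_H$ then gives $T\xi\in\operatorname{Dom}(D_H)$ and $[D_H,T]\xi=R\xi$. So $T$ preserves $\operatorname{Dom}(D_H)$ and the commutator restricted to $\operatorname{Dom}(D_H)$ extends to the bounded operator $R$. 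Since $\operatorname{Dom}(D_H)$ is dense, this extension is unique, and therefore $T\in\operatorname{Dom}(\delta_H)$ with $\delta_H(T)=R$.

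For the derivation property, take $A,B\in\operatorname{Dom}(\delta_H)$. Both preserve $\operatorname{Dom}(D_H)$, so $AB$ does too. On $\operatorname{Dom}(D_H)$ we have the algebraic identity
\[
[D_H,AB]=[D_H,A]B+A[D_H,B].
\]
Each term on the right extends boundedly, so $AB\in\operatorname{Dom}(\delta_H)$ and $\delta_H(AB)=\delta_H(A)B+A\delta_H(B)$. Linearity of $\delta_H$ and $I\in\operatorname{Dom}(\delta_H)$ with $\delta_H(I)=0$ are immediate.

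The only step needing care is the density of $\operatorname{Dom}(D_H)$, which guarantees that the bounded extension in Definition~\ref{def61} is unique. Everything else is formal, mirroring Lemma~\ref{closed-derivation}.
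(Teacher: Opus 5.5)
Your proposal is correct and takes the same route as the paper. The paper's proof just says that the argument of Lemma~\ref{closed-derivation} carries over verbatim with $D_H$ and $\ell^q(G/H)$ in place of $D_\ell$ and $L^q(G)$, and your write-up is exactly that transcription. You also supply details the paper takes for granted: that $D_H$ is closed, and that $\operatorname{Dom}(D_H)$ is dense, which makes the bounded extension of the commutator unique.
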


\begin{proof}
The proof is the same as that of Lemma~\ref{closed-derivation}, replacing $D_\ell$ and $L^q(G)$ by $D_H$ and $\ell^q(G/H)$, respectively.
\end{proof}

\begin{lemma}\label{le63}
Fix $g\in G$, set
$$\varphi_g(x)=\mathbb L_H(x)-\mathbb L_H(g^{-1}x), \qquad x\in G/H.$$
Then $\varphi_g$ is bounded.
\end{lemma}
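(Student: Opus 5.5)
The plan is to prove the sharper statement $|\varphi_g(x)|\le \ell(g)$ for all $x\in G/H$. This mirrors the pointwise estimate $|\ell(x)-\ell(y^{-1}x)|\le \ell(y)$ from the proof of Lemma~\ref{chensmooth}. The argument has two ingredients: $\mathbb L_H$ is well defined on cosets, and it satisfies a triangle-type inequality under the left action of $G$ on $G/H$.

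First I would record well-definedness. If $xH=x'H$, then $x'=xh_0$ with $h_0\in H$. Since $h\mapsto h_0h$ is a bijection of $H$, we get
$$\inf_{h\in H}\ell(x'h)=\inf_{h\in H}\ell(xh_0h)=\inf_{h\in H}\ell(xh).$$
The infimum is attained because $\ell$ takes values in $\mathbb N_0$.

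Next I would prove the key inequality $\mathbb L_H(gx)\le \ell(g)+\mathbb L_H(x)$ for $g\in G$ and $x=aH\in G/H$. Choose $h\in H$ with $\ell(ah)=\mathbb L_H(aH)$. Then $gaH$ is represented by $gah$, so
$$\mathbb L_H(gaH)\le \ell(gah)\le \ell(g)+\ell(ah)=\ell(g)+\mathbb L_H(aH),$$
using subadditivity of $\ell$.

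Finally I would apply this inequality twice to bound $\varphi_g$. Writing $x=g(g^{-1}x)$ gives $\mathbb L_H(x)\le \ell(g)+\mathbb L_H(g^{-1}x)$, so $\varphi_g(x)\le \ell(g)$. Applying the inequality with $g^{-1}$ in place of $g$ and using $\ell(g^{-1})=\ell(g)$ gives $\mathbb L_H(g^{-1}x)\le \ell(g)+\mathbb L_H(x)$, so $-\varphi_g(x)\le \ell(g)$. Together these give $\sup_{x\in G/H}|\varphi_g(x)|\le \ell(g)<\infty$. I do not expect a genuine obstacle here. The one point needing care is that the left action $x\mapsto gx$ on cosets commutes correctly with the choice of representative, and the well-definedness step handles that. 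The explicit bound $\ell(g)$, rather than mere boundedness, is what the later iterated-commutator estimates for $\pi_q^H(f)$ will need.
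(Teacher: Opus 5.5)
Your proof is correct and follows essentially the same route as the paper: both apply the triangle inequality $\mathbb L_H(gaH)\le \ell(g)+\mathbb L_H(aH)$ once with $g$ and once with $g^{-1}$, using $\ell(g^{-1})=\ell(g)$, and both obtain the sharper bound $|\varphi_g|\le \ell(g)$. The only difference is cosmetic. You use that the infimum defining $\mathbb L_H$ is attained, which is valid because the word length is integer-valued. The paper instead takes an $\varepsilon$-approximate minimizer $h_\varepsilon$, and that version works for any length function.
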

 
\begin{proof}
Let $aH\in G/H$ and
$\varepsilon>0$. Since $\mathbb L_H(aH)=\inf_{h\in H}\ell(ah)$, we can choose $h_\varepsilon\in H$ such that
$$\ell(ah_\varepsilon)\leq \mathbb L_H(aH)+\varepsilon.$$
Then, by the triangle inequality,
$$\mathbb L_H(g^{-1}aH)\leq\ell(g^{-1}ah_\varepsilon)\leq
\ell(g)+\ell(ah_\varepsilon)\leq\ell(g)+\mathbb L_H(aH)+\varepsilon.$$
Letting $\varepsilon\to 0$, we get
$$\mathbb L_H(g^{-1}aH)-\mathbb L_H(aH)\leq\ell(g).$$
Applying the same argument with $aH$ replaced by $g^{-1}aH$ and $g$
replaced by $g^{-1}$ gives
$$\mathbb L_H(aH)-\mathbb L_H(g^{-1}aH)\leq\ell(g).$$
Therefore
$|\varphi_g(aH)|\leq\ell(g)$.
Since $aH\in G/H$ was arbitrary, it follows that
$$|\varphi_g(x)|\leq\ell(g),\qquad x\in G/H.$$
Hence $\varphi_g$ is bounded on $G/H$, that is 
$$\|\varphi_g\|_\infty=\sup_{x\in G/H}|\varphi_g(x)|\leq\ell(g).$$
\end{proof}

For $g\in G$, define
$$\omega_H(g)=\|\varphi_g\|_\infty
=\sup_{x\in G/H}\left|\mathbb L_H(x)-\mathbb L_H(g^{-1}x)\right|.$$
By Lemma~\ref{le63},  
$\omega_H(g)\leq\ell(g)$ for all $g\in G$. Moreover, $\omega_H$
is symmetric and subadditive. Indeed,
$$\omega_H(g^{-1})=\sup_{x \in G/H}|\mathbb L_H(x)-\mathbb L_H(gx)|= \sup_{y \in G/H}|\mathbb L_H(g^{-1}y)-\mathbb L_H(y)|=\omega_H(g),$$ 
and, for $g_1,g_2\in G$,
\begin{align*}
	\omega_H(g_1g_2) &= \sup_{x \in G/H} |\mathbb L_H(x)-\mathbb L_H(g_2^{-1}g_1^{-1}x)| \\
	&\le \sup_{x \in G/H} |\mathbb L_H(x)-\mathbb L_H(g_1^{-1}x)|+\sup_{x \in G/H} |\mathbb L_H(g_1^{-1}x) - \mathbb L_H(g_2^{-1}(g_1^{-1}x))|\\
	&\le \omega_H(g_1) + \omega_H(g_2),
\end{align*}
Consequently, for $s>0$ and $0<\beta<1$, the function
$\Omega_s(g)=e^{s\omega_H(g)^\beta}$
is submultiplicative, since
$$\omega_H(g_1g_2)^\beta\leq\omega_H(g_1)^\beta+\omega_H(g_2)^\beta.$$
Thus $\Omega_s$ is a subexponential weight of exponent $\beta$.

\begin{defn}
Let $0<\beta<1$. For $s>0$, set
$$\|f\|_{1,s,H}=\sum_{g\in G}|f(g)|e^{s\omega_H(g)^\beta}.$$
\emph{The relative weighted  Gevrey-Beurling type space}  on $G$ is defined by
$$\mathcal S_{1,H}^{\infty}(G)=\bigcap_{s>0}\ell^1\Big(G,e^{s\omega_H^\beta}\Big),$$
endowed with the Fréchet topology generated by the seminorms
$\{\|\cdot\|_{1,s,H}\}_{s>0}$.
Let $1<q<\infty$, $\sigma>1$, and $R>0$. Let $\delta_H$ be the 
closed derivation defined in Definition \ref{def61}. Define
$$G_{H,R}^{(\sigma)}(B(\ell^q(G/H)))
=\left\{T\in\bigcap_{n\ge0}\mathrm{Dom}(\delta_H^n):\|T\|_{\sigma,R,H}<\infty\right\},$$
where
$$\|T\|_{\sigma,R,H}=\sup_{n\ge0}\frac{R^n\|\delta_H^n(T)\|_{B(\ell^q(G/H))}}{(n!)^\sigma}.$$
The \emph {operator-algebraic relative Gevrey-Beurling class} associated with $\delta_H$ is defined by
$$G_H^{(\sigma)}(B(\ell^q(G/H)))
	=\bigcap_{R>0}G_{H,R}^{(\sigma)}(B(\ell^q(G/H))),$$
equipped with the projective limit topology generated by the seminorms
$\{\|\cdot\|_{\sigma,R,H}\}_{R>0}$.
\end{defn}

\begin{remark}
The space $\mathcal S_{1,H}^{\infty}(G)$ is a Fr\'echet 
algebra. Indeed, $\Omega_s(g)=e^{s\omega_H(g)^\beta}$ is
submultiplicative for every $s>0$, and hence $\ell^1(G,\Omega_s)$ is a
weighted Banach algebra with norm $\|\cdot\|_{1,s,H}$. Since
$\{\|\cdot\|_{1,s,H}\}_{s>0}$ is increasing and generated by
$\{\|\cdot\|_{1,n,H}\}_{n\in\mathbb N}$, the space
$\mathcal S_{1,H}^{\infty}(G)$ is a Fr\'echet  algebra as a
projective limit of weighted Banach  algebras.
Since $\delta_H$ is a closed derivation on $B(\ell^q(G/H))$, the same
argument as in Remark~\ref{rek24}, applied to
$\mathcal A=B(\ell^q(G/H))$ and $\delta=\delta_H$, shows that
$G_H^{(\sigma)}(B(\ell^q(G/H)))$
is a Fr\'echet algebra.
\end{remark}

\begin{lemma}\label{relative}
Let $1<q<\infty$ and let $\delta_H$ be the commutator derivation in Definition \ref{def61}. For any $n\geq 0$ and $g\in G$, we have
$\pi_q^H(g)\in\operatorname{Dom}(\delta_H^n)$,
and
$$\|\delta_H^n(\pi_q^H(g))\|_{B(\ell^q(G/H))}\leq\omega_H(g)^n,$$
where
$$\omega_H(g)=\sup_{x\in G/H}|\mathbb L_H(x)-\mathbb L_H(g^{-1}x)|.$$
If $g=e$, we set $\omega_H(g)^0=1.$
Furthermore, if $f\in\ell^1(G)$ satisfies
$$\sum_{g\in G}|f(g)|\omega_H(g)^n<\infty,$$
then $\pi_q^H(f)\in\operatorname{Dom}(\delta_H^n)$
and
$$\|\delta_H^n(\pi_q^H(f))\|_{B(\ell^q(G/H))}\leq
	\sum_{g\in G}|f(g)|\omega_H(g)^n.$$
\end{lemma}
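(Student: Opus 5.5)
The plan is to mirror the proof of Lemma~\ref{chensmooth}, now on the discrete space $G/H$, where everything is simpler because the translation operators are isometries and $\varphi_g$ is bounded by Lemma~\ref{le63}.

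\textbf{Single translation.} Fix $g\in G$ and set $U=\pi_q^H(g)$, an isometry of $\ell^q(G/H)$. For $\xi\in\operatorname{Dom}(D_H)$ we compute pointwise
$$(D_HU\xi)(x)-(UD_H\xi)(x)=\bigl(\mathbb L_H(x)-\mathbb L_H(g^{-1}x)\bigr)\xi(g^{-1}x)=(M_{\varphi_g}U\xi)(x),$$
where $M_{\varphi_g}$ is multiplication by $\varphi_g$. First, $U$ preserves $\operatorname{Dom}(D_H)$: we have $\mathbb L_H U\xi=\varphi_g U\xi+U(\mathbb L_H\xi)$, and both terms lie in $\ell^q$ because $\varphi_g$ is bounded. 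Hence $\delta_H(U)=M_{\varphi_g}U$, with norm at most $\|\varphi_g\|_\infty=\omega_H(g)$.

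\textbf{Induction.} We claim $\delta_H^n(U)=M_{\varphi_g}^nU$. Since $M_{\varphi_g}^n$ is a bounded multiplication operator, it commutes with $D_H$ on $\operatorname{Dom}(D_H)$ and preserves that domain. So $M_{\varphi_g}^n\in\operatorname{Dom}(\delta_H)$ with $\delta_H(M_{\varphi_g}^n)=0$. The Leibniz rule from Lemma~\ref{lem:delta-H-closed-derivation} then gives
$$\delta_H(M_{\varphi_g}^nU)=M_{\varphi_g}^n\,\delta_H(U)=M_{\varphi_g}^{n+1}U,$$
and therefore $\|\delta_H^n(U)\|\le\omega_H(g)^n$. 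For $g=e$ we have $U=I$, so $\delta_H^n(I)=0$ for $n\ge1$; with the stated convention the bound holds trivially.

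\textbf{Passing to $f\in\ell^1(G)$.} Define $T_N=\sum_{g\in F_N}f(g)\pi_q^H(g)$ for an increasing exhaustion of $G$ by finite sets $F_N$. Then $T_N\to\pi_q^H(f)$ in operator norm, since $\|\pi_q^H(g)\|=1$ and $f\in\ell^1$. For each $k\le n$, the operators $\delta_H^k(T_N)=\sum_{F_N}f(g)M_{\varphi_g}^k\pi_q^H(g)$ are Cauchy in norm. This uses the hypothesis $\sum_g|f(g)|\omega_H(g)^n<\infty$, which also controls the weights $\omega_H(g)^k$ for $k\le n$: indeed $\omega_H(g)^k\le 1+\omega_H(g)^n$, and $f\in\ell^1$ handles the constant term.

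\textbf{Main obstacle: closedness.} The delicate step is transferring membership in $\operatorname{Dom}(\delta_H^n)$ to the limit. Closedness of $\delta_H$ applied inductively does this. At stage $k$ we have $\delta_H^{k-1}(T_N)\to\delta_H^{k-1}(\pi_q^H(f))$ and $\delta_H^{k}(T_N)$ convergent. Closedness then gives $\delta_H^{k-1}(\pi_q^H(f))\in\operatorname{Dom}(\delta_H)$, with $\delta_H^{k}(\pi_q^H(f))$ equal to the limit. The norm bound follows from the triangle inequality:
$$\|\delta_H^n(\pi_q^H(f))\|\le\sum_g|f(g)|\,\omega_H(g)^n.$$
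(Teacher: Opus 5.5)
Your proposal is correct and follows the paper's proof essentially step by step. The steps match: domain invariance giving $\delta_H(\pi_q^H(g))=M_{\varphi_g}\pi_q^H(g)$, the Leibniz induction using $\delta_H(M_{\varphi_g^n})=0$, and the extension to $f$ via finite truncations, the bound $\omega_H(g)^k\le 1+\omega_H(g)^n$, and closedness of $\delta_H$ applied inductively in $k$. (This is the paper's truncation-plus-closedness argument, not the direct kernel argument of Lemma~\ref{chensmooth} you said you were mirroring.)
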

\begin{proof}
Fix $g\in G$, by Lemma \ref{le63}, we have 
$$\|\varphi_g\|_\infty=\sup_{x\in G/H}|\varphi_g(x)|=\omega_H(g)\leq \ell(g).$$
Now let $\xi\in\operatorname{Dom}(D_H)$. We  prove that
$\pi_q^H(g)\xi\in\operatorname{Dom}(D_H)$. Indeed, for $x\in G/H$,
$$\mathbb L_H(x)(\pi_q^H(g)\xi)(x)=\mathbb L_H(x)\xi(g^{-1}x).$$
Using
$\mathbb L_H(x)=\mathbb L_H(g^{-1}x)+\varphi_g(x)$,
we obtain, for every $x\in G/H$,
$$\mathbb L_H(x)(\pi_q^H(g)\xi)(x)
=\mathbb L_H(g^{-1}x)\xi(g^{-1}x)+\varphi_g(x)\xi(g^{-1}x).$$
Equivalently,
\begin{equation}\label{eq64}
D_H(\pi_q^H(g)\xi)=\pi_q^H(g)(D_H\xi)+M_{\varphi_g}\pi_q^H(g)\xi.
\end{equation}
Indeed, the first term belongs to $\ell^q(G/H)$ because
$D_H\xi\in\ell^q(G/H)$ and $\pi_q^H(g)$ is an isometry on
$\ell^q(G/H)$. The second term belongs to $\ell^q(G/H)$ because
$\pi_q^H(g)\xi\in\ell^q(G/H)$ and $M_{\varphi_g}$ is bounded on
$\ell^q(G/H)$. Hence
$$D_H(\pi_q^H(g)\xi)\in\ell^q(G/H),$$
and therefore
$\pi_q^H(g)\xi\in\operatorname{Dom}(D_H)$.
Moreover, for $\xi\in\operatorname{Dom}(D_H)$, equality \ref{eq64} implies 
$$[D_H,\pi_q^H(g)]\xi
=D_H\pi_q^H(g)\xi-\pi_q^H(g)D_H\xi=M_{\varphi_g}\pi_q^H(g)\xi.$$
Thus
$$[D_H,\pi_q^H(g)]\big|_{\operatorname{Dom}(D_H)}
=M_{\varphi_g}\pi_q^H(g)\big|_{\operatorname{Dom}(D_H)}.$$
Since $M_{\varphi_g}\pi_q^H(g)$ is bounded on $\ell^q(G/H)$, it follows
that
$$\pi_q^H(g)\in\operatorname{Dom}(\delta_H)
\qquad \text{and}\qquad\delta_H(\pi_q^H(g))=M_{\varphi_g}\pi_q^H(g).$$
Next, we claim that for every $n\geq 0$,
$$\pi_q^H(g)\in\operatorname{Dom}(\delta_H^n)\qquad \text{and}\qquad
\delta_H^n(\pi_q^H(g))=M_{\varphi_g^n}\pi_q^H(g), $$
and will prove it by induction. For $n=0$, this is immediate.
Assume the assertion holds for some $n\geq 0$.
Since both $D_H$ and $M_{\varphi_g^n}$ are multiplication operators and
$\varphi_g^n$ is bounded, $M_{\varphi_g^n}$ preserves
$\operatorname{Dom}(D_H)$ and commutes with $D_H$ on this domain.
Hence
$$M_{\varphi_g^n}\in\operatorname{Dom}(\delta_H),
	\qquad\delta_H(M_{\varphi_g^n})=0.$$
Using the Leibniz rule for $\delta_H$, we get
$$\delta_H^{n+1}(\pi_q^H(g))=\delta_H\bigl(M_{\varphi_g^n}\pi_q^H(g)\bigr)
	=M_{\varphi_g^n}\delta_H(\pi_q^H(g))=M_{\varphi_g^{n+1}}\pi_q^H(g).$$
This proves the claim.
Consequently,
\begin{align*}
\|\delta_H^n(\pi_q^H(g))\|_{B(\ell^q(G/H))}
&=\|M_{\varphi_g^n}\pi_q^H(g)\|_{B(\ell^q(G/H))}\\
&\leq\|M_{\varphi_g^n}\|_{B(\ell^q(G/H))}
=\|\varphi_g\|_\infty^n=\omega_H(g)^n.
\end{align*}
Thus, for every finitely supported function  $f$, we obtain 
\begin{align}\label{eq66}
\|\delta_H^n(\pi_q^H(f))\|_{B(\ell^q(G/H))}
&=\left\|\sum_{g\in G} f(g)\delta_H^n(\pi_q^H(g))\right\|_{B(\ell^q(G/H))} \nonumber\\
&\leq\sum_{g\in G}|f(g)|\|\delta_H^n(\pi_q^H(g))\|_{B(\ell^q(G/H))} \nonumber\\
&\leq\sum_{g\in G}|f(g)|\omega_H(g)^n.
\end{align}
Now, if $f\in\ell^1(G)$ and satisfies
$\sum_{g\in G}|f(g)|\omega_H(g)^n<\infty$.
Since $\omega_H(g)^k\leq 1+\omega_H(g)^n$ for $0\leq k\leq n$, we have
$$\sum_{g\in G}|f(g)|\omega_H(g)^k<\infty,\qquad 0\leq k\leq n.$$
Choose an increasing sequence of finite subsets $E_m\subset G$ such that
$$\sum_{g\notin E_m}|f(g)|\omega_H(g)^k\to 0,\qquad 0\leq k\leq n,$$
and set $f_m=f\mathbf \chi_{E_m}$. Then $f_m$ is a sequence of finitely support and 
$$\sum_{g\in G}|f_m(g)-f(g)|\omega_H(g)^k=\sum_{g\notin E_m}|f(g)|\omega_H(g)^k\to 0.$$ 
For each $0\leq k\leq n$, by \eqref{eq66}
$$\|\delta_H^k(\pi_q^H(f_m-f_j))\|_{B(\ell^q(G/H))}
\leq\sum_{g\in G}|f_m(g)-f_j(g)|\omega_H(g)^k.$$
Hence $\delta_H^k(\pi_q^H(f_m))$ is Cauchy in operator norm for each
$0\leq k\leq n$, denote its limit by $T_k$. In particular,
$T_0=\pi_q^H(f)$. Since $\delta_H$ is closed, the convergences
$$\delta_H^k(\pi_q^H(f_m))\to T_k,\qquad \text{and}\qquad 
\delta_H^{k+1}(\pi_q^H(f_m))\to T_{k+1}$$
imply  that
$T_k\in\operatorname{Dom}(\delta_H)$
and $\delta_H(T_k)=T_{k+1}$,
for all $0\leq k<n$. Therefore
$$\pi_q^H(f)=T_0\in\operatorname{Dom}(\delta_H^n)
\qquad \text{and}\qquad \delta_H^n(\pi_q^H(f))=T_n.$$
Finally, by \eqref{eq66},
\begin{align*}
\|\delta_H^n(\pi_q^H(f))\|_{B(\ell^q(G/H))}
&=\|T_n\|_{B(\ell^q(G/H))} \\
&=\lim_{m\to\infty}\|\delta_H^n(\pi_q^H(f_m))\|_{B(\ell^q(G/H))} \\
&\leq\lim_{m\to\infty}\sum_{g\in G}|f_m(g)|\omega_H(g)^n \\
&=\sum_{g\in G}|f(g)|\omega_H(g)^n.
\end{align*}
The proof is completed.
\end{proof}

The following norm estimate is the relative analogue of the estimate of Proposition \ref{prochen1} in Section~4. 

\begin{proposition}\label{prop:pair-gevrey-embedding}
Let $0<\beta<1$ and $1<q<\infty$.  If $f\in\mathcal S_{1,H}^{\infty}(G)$, then $\pi_q^H(f)$ belongs to the relative Gevrey class
$$G_H^{(1/\beta)}(B(\ell^q(G/H))).$$
More precisely, for every $s>0$, with $R_s=(\beta s)^{1/\beta}$,
$$\sup_{n\ge0}\frac{R_s^n\|\delta_H^n(\pi_q^H(f))\|_{B(\ell^q(G/H))}}{(n!)^{1/\beta}}\leq \|f\|_{1,s,H}.$$
\end{proposition}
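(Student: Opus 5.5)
The plan is to reduce the statement to Lemma~\ref{relative} together with the elementary pointwise inequality already used in the proof of Proposition~\ref{prochen1}. Fix $s>0$ and $f\in\mathcal S_{1,H}^{\infty}(G)$. The first step is to verify that for every $n\ge 0$ the hypothesis of Lemma~\ref{relative} holds, namely $\sum_{g}|f(g)|\omega_H(g)^n<\infty$. This follows because $u^n\le C_n e^{u^\beta}$ for $u\ge0$, so the sum is dominated by $C_n\|f\|_{1,1,H}<\infty$. Lemma~\ref{relative} then gives $\pi_q^H(f)\in\operatorname{Dom}(\delta_H^n)$ for all $n$, together with
$$\|\delta_H^n(\pi_q^H(f))\|_{B(\ell^q(G/H))}\le\sum_{g\in G}|f(g)|\,\omega_H(g)^n .$$

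Next I will reuse the maximization from the proof of Proposition~\ref{prochen1}. For $u\ge0$ we have $\sup_{u\ge 0}u^ne^{-su^\beta}=\bigl(\tfrac{n}{\beta s e}\bigr)^{n/\beta}$, and combining this with $(n/e)^n\le n!$ yields
$$u^n\le(\beta s)^{-n/\beta}(n!)^{1/\beta}e^{su^\beta}.$$
Substituting $u=\omega_H(g)$ (with the convention $0^0=1$, which is consistent with the inequality) and summing against $|f(g)|$ gives
$$\|\delta_H^n(\pi_q^H(f))\|\le R_s^{-n}(n!)^{1/\beta}\|f\|_{1,s,H},\qquad R_s=(\beta s)^{1/\beta}.$$
Multiplying by $R_s^n/(n!)^{1/\beta}$ and taking the supremum over $n$ produces the displayed estimate.

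Finally, for membership in $G_H^{(1/\beta)}$, take an arbitrary $R>0$ and set $s=R^\beta/\beta$, so that $R_s=R$. Since $\|f\|_{1,s,H}<\infty$ for every $s$, this gives $\|\pi_q^H(f)\|_{1/\beta,R,H}<\infty$ for all $R>0$. None of the steps poses a real obstacle. The only point needing care is the domain statement, which is already handled by the closedness argument inside Lemma~\ref{relative}. Unlike Proposition~\ref{prochen1}, no growth hypothesis is needed, because the weight is imposed directly in $\ell^1$.
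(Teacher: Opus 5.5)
Your proposal is correct and follows essentially the same route as the paper: Lemma~\ref{relative} bounds $\|\delta_H^n(\pi_q^H(f))\|$ by the weighted moment $\sum_{g}|f(g)|\omega_H(g)^n$, and the pointwise inequality $u^n\le(\beta s)^{-n/\beta}(n!)^{1/\beta}e^{su^\beta}$ from Proposition~\ref{prochen1} turns this into the stated bound. The choice $s=R^\beta/\beta$ then gives membership for every $R>0$, and your explicit check that the moment hypothesis of Lemma~\ref{relative} holds for every $n$ (so that $\pi_q^H(f)\in\bigcap_n\operatorname{Dom}(\delta_H^n)$) is a small point the paper leaves implicit.
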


\begin{proof}
Let $f\in\mathcal S_{1,H}^{\infty}(G)$. For any $s>0$ and $u\ge0$, we have the elementary inequality (see the proof in Proposition \ref{prochen1})
$$u^n\leq (\beta s)^{-n/\beta}(n!)^{1/\beta}e^{su^\beta}.$$
Applying this with $u=\omega_H(g)$, we have 
\begin{align*}
\sum_{g\in G}|f(g)|\omega_H(g)^n 
&\leq (\beta s)^{-n/\beta}(n!)^{1/\beta}\sum_{g\in G}|f(g)|e^{s\omega_H(g)^\beta}\\
&= (\beta s)^{-n/\beta}(n!)^{1/\beta}\|f\|_{1,s,H}.	
\end{align*}
Since $f\in\mathcal S_{1,H}^{\infty}(G)$, the norm $\|f\|_{1,s,H}$ is finite. By Lemma \ref{relative} we obtain
$$\|\delta_H^n(\pi_q^H(f))\|_{B(\ell^q(G/H))} \leq \sum_{g\in G}|f(g)|\omega_H(g)^n.$$
Thus
$$\|\delta_H^n(\pi_q^H(f))\|_{B(\ell^q(G/H))} \leq (\beta s)^{-n/\beta}(n!)^{1/\beta} \|f\|_{1,s,H}.$$
Multiplying both sides by $R_s^n/(n!)^{1/\beta}$ with 
$R_s=(\beta s)^{1/\beta}$ yields
$$\frac{R_s^n\|\delta_H^n(\pi_q^H(f))\|_{B(\ell^q(G/H))}}{(n!)^{1/\beta}} \leq \|f\|_{1,s,H}.$$
Taking the supremum over $n\ge0$ gives
$$\sup_{n\ge0}
\frac{R_s^n\|\delta_H^n(\pi_q^H(f))\|_{B(\ell^q(G/H))}}{(n!)^{1/\beta}}
\leq\|f\|_{1,s,H}.$$
Since $s>0$ is arbitrary and $R_s=(\beta s)^{1/\beta}$ ranges over all
positive numbers, it follows that
$$\pi_q^H(f)\in G_H^{(1/\beta)}(B(\ell^q(G/H))).$$
\end{proof}

Let $PF_q^H(G)$ denote the operator norm closure of $\pi_q^H(\mathbb C[G])$ in $B(\ell^q(G/H))$, and let $\widetilde{PF_q^H(G)}$ be its unitization.  Define the relative  Gevrey-Beurling operator algebra by
$$\mathcal A_{q,\beta}(G,H)=\widetilde{PF_q^H(G)}\cap G_H^{(1/\beta)}(B(\ell^q(G/H))).$$
The main result of this section is the following inverse-closedness
statement.

\begin{theorem}\label{thm:relative-gevrey-wiener}
Let $G$ be a finitely generated discrete group and let $H\leq G$ be a finitely generated subgroup.  Let $0<\beta<1$ and $1<q<\infty$.  If $T\in\mathcal A_{q,\beta}(G,H)$ and $T$ is invertible in $\widetilde{PF_q^H(G)}$, then
$$T^{-1}\in\mathcal A_{q,\beta}(G,H).$$
In particular, if $f\in\mathcal S_{1,H}^{\infty}(G)$ and $\pi_q^H(f)$ is invertible in $\widetilde{PF_q^H(G)}$, then
$$\pi_q^H(f)^{-1}\in\widetilde{PF_q^H(G)}\cap G_H^{(1/\beta)}(B(\ell^q(G/H))).$$
Moreover, the Gevrey seminorms of the inverse satisfy the estimates in Proposition~\ref{prochen2}.
\end{theorem}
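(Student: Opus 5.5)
The plan is to mirror the proof of Theorem~\ref{inverse-closed}, with $B(\ell^q(G/H))$ in place of $B(L^q(G))$ and $\delta_H$ in place of $\delta_\ell$. By Lemma~\ref{lem:delta-H-closed-derivation}, $\delta_H$ is a closed derivation on the unital Banach algebra $\mathcal B=B(\ell^q(G/H))$. Hence Proposition~\ref{prochen2} applies verbatim with $\sigma=1/\beta>1$. It shows that $G_H^{(1/\beta)}(B(\ell^q(G/H)))$ is inverse-closed in $\mathcal B$, with the quantitative bound
$$\|T^{-1}\|_{1/\beta,R',H}\le\|T^{-1}\|_{\mathcal B}\,v_{1/\beta-1}\!\left(\|T^{-1}\|_{\mathcal B}\frac{\|T\|_{1/\beta,R,H}}{1-R'/R}\right),\qquad 0<R'<R.$$

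First I note that $\mathcal A_{q,\beta}(G,H)$ is a unital subalgebra of $\mathcal B$. It is the intersection of the unital Banach algebra $\widetilde{PF_q^H(G)}$ with the Fr\'echet algebra $G_H^{(1/\beta)}$; the latter is an algebra by the remark following its definition, via the Leibniz estimate of Remark~\ref{rek24}. It contains $I$ because $\delta_H(I)=0$.

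Next, let $T\in\mathcal A_{q,\beta}(G,H)$ be invertible in $\widetilde{PF_q^H(G)}$ with inverse $S$. Since $\widetilde{PF_q^H(G)}\subset\mathcal B$ with the same identity, the relations $TS=ST=I$ hold in $\mathcal B$. So $S$ is the inverse of $T$ in $\mathcal B$, and in particular $T^{-1}=S\in\widetilde{PF_q^H(G)}$. Since also $T\in G_H^{(1/\beta)}$, Proposition~\ref{prochen2} gives $T^{-1}\in G_{H,R'}^{(1/\beta)}$ for every $R'>0$, taking $R=2R'$. Hence $T^{-1}\in\mathcal A_{q,\beta}(G,H)$, and the seminorm bound above holds.

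For the special case, Proposition~\ref{prop:pair-gevrey-embedding} gives $\pi_q^H(f)\in G_H^{(1/\beta)}$ whenever $f\in\mathcal S_{1,H}^\infty(G)$. It remains to check $\pi_q^H(f)\in PF_q^H(G)$, which I regard as the only point needing care, since $PF_q^H(G)$ is defined as the closure of $\pi_q^H(\mathbb C[G])$ and $f$ need not be finitely supported. Because $\Omega_s\ge1$, we have $f\in\ell^1(G)$, and $\|\pi_q^H(g)\|\le1$ since the quasi-regular representation acts by isometries. So the finite truncations $f\chi_{E_m}$ give $\pi_q^H(f\chi_{E_m})\to\pi_q^H(f)$ in operator norm, as in the proof of Lemma~\ref{relative}. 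Thus $\pi_q^H(f)\in\mathcal A_{q,\beta}(G,H)$, and the first part applies.
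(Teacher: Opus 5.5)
Your proposal is correct and is essentially the paper's proof. You apply Proposition~\ref{prochen2} to $B(\ell^q(G/H))$ with the closed derivation $\delta_H$, note that the inverse in $\widetilde{PF_q^H(G)}$ is the inverse in $B(\ell^q(G/H))$, and invoke Proposition~\ref{prop:pair-gevrey-embedding} for the special case; your check that $\pi_q^H(f)\in PF_q^H(G)$ via norm-convergent finite truncations of the $\ell^1$ kernel is a detail the paper leaves implicit, since the invertibility hypothesis presupposes it.
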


\begin{proof}
We apply Proposition~\ref{prochen2} to the Banach algebra
$B(\ell^q(G/H))$ and the closed derivation $\delta_H$.  
Given that $T \in \mathcal A_{q,\beta}(G,H)$ is invertible in $\widetilde{PF_q^H(G)}$, its inverse exists in $B(\ell^q(G/H))$. By Proposition \ref{prochen2}, $T^{-1}$ must also belong to the relative Gevrey class $G_H^{(1/\beta)}(B(\ell^q(G/H)))$. On the other hand,  since $T$ is invertible in $\widetilde{PF_q^H(G)}$, its inverse belongs
to  $\widetilde{PF_q^H(G)}$. Hence  $T^{-1} \in \mathcal A_{q,\beta}(G,H)$.
In particular, for $f \in \mathcal S_{1,H}^{\infty}(G)$, Proposition \ref{prop:pair-gevrey-embedding} ensures $\pi_q^H(f)$ is in the relative Gevrey class. Thus, its inverse automatically falls into $\widetilde{PF_q^H(G)} \cap G_H^{(1/\beta)}(B(\ell^q(G/H)))$. The claimed  estimates for the Gevrey seminorms are then a direct consequence of Proposition~\ref{prochen2} applied to $\delta_H$.
\end{proof}

When $H$ is a normal subgroup of $G$, we immediately obtain the following result.
\begin{corollary}\label{normal}
Let $G$ be a finitely generated discrete group and let $H \triangleleft G$ be a finitely generated normal subgroup. Let $0<\beta<1$ and $1<q<\infty$. Define the quotient weighted weighted Gevrey-Beurling space   by
$$\mathcal S_{1, G/H}^{\infty}(G) = \bigcap_{s>0}\ell^1(G, e^{s \mathbb L_H(gH)^\beta}).$$
If $f \in \mathcal S_{1, G/H}^{\infty}(G)$ and $\pi_q^H(f)$ is invertible in $\widetilde{PF_q^H(G)}$, then
$$\pi_q^H(f)^{-1} \in \widetilde{PF_q^H(G)}\cap G_H^{(1/\beta)}(B(\ell^q(G/H))).$$
\end{corollary}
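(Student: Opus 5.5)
The plan is to reduce Corollary~\ref{normal} to Theorem~\ref{thm:relative-gevrey-wiener}. The only thing to check is that, when $H$ is normal, the quotient-weighted space $\mathcal S_{1,G/H}^{\infty}(G)$ is contained in the relative space $\mathcal S_{1,H}^{\infty}(G)$. Once this inclusion is established, Proposition~\ref{prop:pair-gevrey-embedding} places $\pi_q^H(f)$ in $G_H^{(1/\beta)}(B(\ell^q(G/H)))$. Moreover, $f\in\ell^1(G)$, since the weight is at least $1$, so $\pi_q^H(f)\in PF_q^H(G)$ as the norm limit of finitely supported truncations. Hence $\pi_q^H(f)\in\mathcal A_{q,\beta}(G,H)$, and Theorem~\ref{thm:relative-gevrey-wiener} yields the conclusion.

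The inclusion rests on a pointwise comparison of weights, $\omega_H(g)\le \mathbb L_H(gH)$ for all $g\in G$. For normal $H$, I will use $G/H$ as a group and check that $\mathbb L_H$ is a length function on it. First, $\mathbb L_H(H)=0$. Next, $\mathbb L_H(g^{-1}H)=\mathbb L_H(gH)$, because $\ell(g^{-1}h)=\ell(h^{-1}g)$ and $h^{-1}g\in gH$ by normality. For subadditivity, take $h_1,h_2\in H$; then $g_1h_1g_2h_2\in g_1g_2H$ by normality, so
$$\mathbb L_H(g_1g_2H)\le \ell(g_1h_1)+\ell(g_2h_2),$$
and taking infima over $h_1,h_2$ gives $\mathbb L_H(g_1g_2H)\le\mathbb L_H(g_1H)+\mathbb L_H(g_2H)$.

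With this in hand, for $x=aH$ normality gives $g^{-1}x=(g^{-1}H)(aH)$ in the quotient group. The triangle inequality for $\mathbb L_H$ then gives
$$|\mathbb L_H(x)-\mathbb L_H(g^{-1}x)|\le \mathbb L_H(g^{-1}H)=\mathbb L_H(gH).$$
Taking the supremum over $x\in G/H$ gives $\omega_H(g)\le \mathbb L_H(gH)$. In fact equality holds, by evaluating at $x=gH$ and using $\mathbb L_H(H)=0$, though only the inequality is needed. It follows that $e^{s\omega_H(g)^\beta}\le e^{s\mathbb L_H(gH)^\beta}$ for every $s>0$, so $\|f\|_{1,s,H}$ is dominated by the quotient-weighted norm, and $\mathcal S_{1,G/H}^{\infty}(G)\subseteq\mathcal S_{1,H}^{\infty}(G)$.

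The only step needing care is the verification that $\mathbb L_H$ is symmetric and subadditive on $G/H$, since both properties genuinely use normality. Everything after that is a direct citation of Proposition~\ref{prop:pair-gevrey-embedding} and Theorem~\ref{thm:relative-gevrey-wiener}. The quantitative Gevrey estimates for the inverse also carry over from Proposition~\ref{prochen2}.
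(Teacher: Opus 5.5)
Your proposal is correct and follows the paper's proof. Both compare $\omega_H(g)$ with $\mathbb L_H(gH)$ via the triangle inequality for the quotient length on the group $G/H$ and then invoke Theorem~\ref{thm:relative-gevrey-wiener}; you use only the inequality $\omega_H\le\mathbb L_H$, which gives an inclusion of spaces, where the paper proves equality, and you also explicitly check the normality-dependent symmetry and subadditivity of $\mathbb L_H$ that the paper simply asserts.
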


\begin{proof}
Since $H$ is a normal subgroup, the homogeneous space $G/H$ is a quotient group. The function $$\mathbb L_H(gH)=\inf_{h\in H}\ell(gh)$$ is the quotient length associated with the quotient generating set, satisfying  
$\mathbb L_H(abH)\le\mathbb L_H(aH)+\mathbb L_H(bH)$ and  
$\mathbb L_H(g^{-1}H)=\mathbb L_H(gH)$.
We claim 
$$\omega_H(g)=\sup_{x\in G/H}|\mathbb L_H(x)-\mathbb L_H(g^{-1}x)|$$ coincides exactly with $\mathbb L_H(gH)$. 
For any $aH \in G/H$, 
$$\mathbb L_H(aH)=\mathbb L_H(gH g^{-1}aH)\le\mathbb L_H(gH)+\mathbb L_H(g^{-1}aH)$$
implies $$\mathbb L_H(aH)-\mathbb L_H(g^{-1}aH)\le\mathbb L_H(gH).$$
Similarly, we have
$$\mathbb L_H(g^{-1}aH)=\mathbb L_H(g^{-1}H aH)\le\mathbb L_H(g^{-1}H)+\mathbb L_H(aH)=\mathbb L_H(gH)+\mathbb L_H(aH),$$
which gives 
$$\mathbb L_H(g^{-1}aH)-\mathbb L_H(aH)\le\mathbb L_H(gH).$$
Combining these two estimates, we obtain
$$|\mathbb L_H(aH)-\mathbb L_H(g^{-1}aH)|\le\mathbb L_H(gH)$$
for all $aH\in G/H$. Taking the supremum over all $aH\in G/H$ yields 
$$\omega_H(g)\le\mathbb L_H(gH).$$
On the other hand,  for $eH \in G/H$, we have
$$|\mathbb L_H(eH)-\mathbb L_H(g^{-1}eH)|=|0-\mathbb L_H(g^{-1}H)|=\mathbb L_H(gH).$$
Hence we deduce $\omega_H(g) \ge\mathbb L_H(gH)$. 
Therefore, $\omega_H(g)=\mathbb L_H(gH)$ and the claim is proved. Consequently, the general relative weighted  Gevrey-Beurling space   $\mathcal S_{1,H}^{\infty}(G)$ defined via $\omega_H(g)$ coincides precisely with $\mathcal S_{1, G/H}^{\infty}(G)$. The conclusion then follows immediately as a special case of Theorem~\ref{thm:relative-gevrey-wiener}.
\end{proof}

\begin{example}
Let $G$ be a finitely generated discrete group, and let
$H\triangleleft G$ be a normal subgroup. Set $Q=G/H$.
Then $Q$ is finitely generated. Under the natural identification
$G/H=Q$, the quasi-regular representation of $G$ on $\ell^q(G/H)$
factors through the quotient map $\rho:G\to Q$. More precisely,
for all $g\in G$, $\pi_q^H(g)=\lambda_q(\rho(g))$,
where $\lambda_q$ denotes the left regular representation of $Q$ on
$\ell^q(Q)$. If $\ell$ is the word length on $G$ associated with a finite symmetric generating set $S$, then the quotient length
$\mathbb L_H(gH)=\inf_{h\in H}\ell(gh)$
is precisely the word length on $Q$ associated with the quotient
generating set $\rho(S)$. Moreover, by Corollary~\ref{normal},
$\omega_H(g)=\mathbb L_H(gH)$. Hence the relative weight
$e^{s\omega_H(g)^\beta}$ depends only on the quotient element 
$\rho(g)\in Q$ and coincides with the ordinary  weight
$e^{s|\rho(g)|_Q^\beta}$
on the quotient group. Consequently, after identifying $\ell^q(G/H)$ with $\ell^q(Q)$, the
relative $q$-pseudofunction algebra and the relative Gevrey-Beurling class
associated with $(G,H)$ reduce to the corresponding ordinary objects on
the quotient group $Q$.
Thus, in the normal-subgroup case, the relative theory recovers the
ordinary theory on the quotient group $Q$.
\end{example}

\begin{example}\label{ex612}
The following example illustrates that the relative Gevrey condition
appearing in Proposition~\ref{prop:pair-gevrey-embedding} may be
substantially weaker than the corresponding ordinary Gevrey condition,
because it is controlled by quotient displacement rather than by the
word length on $G$. Let
$$G=F_2\times\mathbb Z^d,\qquad	H=F_2\times\{0\}.$$
Then $H$ is normal in $G$, and $G/H\cong \mathbb Z^d$
via $(u,z)H\mapsto z$. Hence the Schreier graph has polynomial growth,	whereas $G$ has exponential growth because of the $F_2$ factor.
For the standard product generating set, the quotient length is the
standard word length on $\mathbb Z^d$, and for $g=(u,z)$ we have 
	$$\omega_H(u,z)=\sup_{y\in\mathbb Z^d}\big||y|-|y-z|\big|=|z|.$$	
Therefore, for this generating set,
 $f\in\mathcal S_{1,H}^{\infty}(G)$ if and only if
$$\sum_{(u,z)\in F_2\times\mathbb Z^d}|f(u,z)|e^{s|z|^\beta}<\infty
	\qquad\text{for every }s>0.$$
For another finite generating set, the corresponding quotient length is
equivalent to $|z|$, and hence the same weighted weighted Gevrey-Beurling space   is obtained up to
equivalent Fr\'echet seminorms.
This condition imposes Gevrey-type decay only in the quotient direction
$\mathbb Z^d$, and no additional weighted decay in the $F_2$ direction
beyond ordinary summability. By contrast, the ordinary Gevrey condition
would involve the weight
$$e^{s\ell(u,z)^\beta}= e^{s(|u|_{F_2}+|z|)^\beta},$$
which also penalizes the $F_2$ direction. Hence the relative condition
can be substantially weaker than the ordinary condition.
Consequently, Proposition~\ref{prop:pair-gevrey-embedding} applies to
kernels whose Gevrey decay is required only in the quotient direction.
\end{example}

\section{Open problems}

We conclude with several questions and further directions suggested by the
present work. The results above indicate that Gevrey regularity can provide
a useful alternative smoothness framework in settings where rapid decay
estimates are unavailable or too restrictive. This perspective is also
compatible with recent developments beyond the group case. For instance,
work of Austad, Ortega, and Palmstrom on \'{e}tale groupoids with strong
subexponential growth indicates that this growth condition is relevant to
$K$-theoretic invariance phenomena for associated $L^p$-operator algebras
\cite{Aus}. Moreover, recent work on rapid decay for \'{e}tale groupoids
and Fell bundles \cite{Stoiber2025,BusKarmakar2026} suggests that weighted
convolution techniques remain flexible beyond the group setting. The
following questions point to possible extensions of the present framework.

\begin{enumerate}
	\item  Can we prove an intrinsic version of
	Theorem~\ref{inverse-closed} in which all iterated commutators
	$\delta_\ell^n(T)$ belong to $PF_q(G)$, rather than only to
	$B(L^q(G))$? Such a result would make the Gevrey operator algebra an intrinsic smooth subalgebra of $PF_q(G)$.
	
	\item To what extent do the spectral comparison results for compactly
	supported functions extend to weighted Banach convolution algebras
	containing $C_c(G)$ and continuously embedded in $PF_q(G)$?
	
	\item Which group cocycles give rise to cyclic cocycles that extend
	continuously to $\mathcal A_{q,\beta}(G)$? Can such cocycles have
	non-trivial pairings with $K$-theory classes of $PF_q(G)$ via the
	inclusion $\mathcal A_{q,\beta}(G)\hookrightarrow PF_q(G)$?
	
	\item Can $\mathcal A_{q,\beta}(G)$ be related to natural smooth dense
	subalgebras arising in Baum-Connes-type or coarse-geometric
	frameworks, such as assembly maps, coarse completions, or localization
	operator algebras?
	
	\item For an \'{e}tale groupoid equipped with a locally bounded proper
	length function and satisfying a uniform strong subexponential growth
	condition on its source fibres, can one construct weighted Gevrey
	convolution algebras that satisfy spectral comparison or spectral
	invariance in the corresponding reduced $L^q$-operator algebra?
	
	\item Under what hypotheses on a Fell bundle over an \'{e}tale groupoid
	can one construct Gevrey-type weighted section algebras that are
	inverse-closed in the corresponding reduced completion?
\end{enumerate}

\subsection*{Acknowledgement} 
The first author was supported 
by NSFC (No. 12061018). The second author was supported by NSFC (No. 12571135, 12171156), Key Laboratory of MEA (Ministry of Education), the Science and Technology Commission of Shanghai (No. 22DZ2229014), and Shanghai Key Laboratory of PMMP, East China Normal University.

\end{document}